\newtheorem{theorem}{Theorem}
\numberwithin{theorem}{section}
\newtheorem{lemma}[theorem]{Lemma}
\newtheorem{prop}[theorem]{Proposition}
\newtheorem{cor}[theorem]{Corollary}
\theoremstyle{definition}
\newtheorem{definition}[theorem]{Definition}
\newtheorem{example}[theorem]{Example}
\newtheorem{remark}[theorem]{Remark}
\newtheorem{asn}[theorem]{Assumption}
\newcommand{\N}{\mathbb{N}}
\newcommand{\R}{\mathbb{R}}
\newcommand{\A}{\mathcal{A}}
\newcommand{\M}{\mathcal{M}}
\newcommand{\T}{\mathcal{T}}
\newcommand{\F}{\mathcal{F}}
\newcommand{\B}{\mathcal{B}}
\newcommand{\weaklyto}{\rightharpoonup}
\newcommand{\diam}{\textup{diam}}
\newcommand{\supp}{\textup{supp}}
\newcommand{\prob}{\mathcal{P}}
\newcommand{\pr}{\textup{pr}}
\newcommand{\id}{\textup{id}}
\newcommand{\graph}{\textup{graph}}
\newcommand{\cpl}{\textup{Cpl}}
\newcommand{\cplm}{\textup{Cpl}_{\text{M}}}
\newcommand{\cplbm}{\textup{Cpl}_\text{biM}}
\newcommand{\cplc}{\textup{Cpl}_c}
\newcommand{\cplbc}{\textup{Cpl}_{bc}}
\newcommand{\W}{\mathcal{W}}
\newcommand{\AW}{\mathcal{AW}}
\renewcommand{\epsilon}{\varepsilon}
\renewcommand{\subset}{\subseteq}
\newcommand\reallywidehat[1]{%
	\savestack{\tmpbox}{\stretchto{%
			\scaleto{%
				\scalerel*[\widthof{\ensuremath{#1}}]{\kern-.6pt\bigwedge\kern-.6pt}%
				{\rule[-\textheight/2]{1ex}{\textheight}}%WIDTH-LIMITED BIG WEDGE
			}{\textheight}% 
		}{0.5ex}}%
	\stackon[1pt]{#1}{\tmpbox}%
}
\let\oldmarginpar\marginpar
\renewcommand\marginpar[1]{\-\oldmarginpar[\raggedleft\footnotesize #1]{\raggedright\footnotesize\color{red} #1}}
\begin{document}

	\title{Denseness of biadapted Monge mappings}
	\author{Mathias Beiglböck, Gudmund Pammer, Stefan Schrott}

	\thanks{M.\ Beiglböck and S.\ Schrott thank the Austrian Science Foundation FWF for support through projects Y782, P35197 and P34743. All authors are very grateful to Vlad Tuchilus for pointing out an error in Section 3.3.}

	\subjclass[2020]{49Q22, 28A33, 46E27, 60B10}%60G42 : Martingales with discrete parameter ; 60G44 : with continuous ;  	28A33   	Spaces of measures, convergence of measures 
	%	46E27   	Spaces of measures % 	60B10   	Convergence of probability measures ; 49Q22 Optimal Transport
	\keywords{causal optimal transport, adapted Wasserstein distance, Monge-maps, Kantorovich transport plans}

	\begin{abstract}
		Adapted or causal transport theory aims to extend classical optimal transport from probability measures to  stochastic processes. On a technical level, the novelty is to restrict to couplings which are \emph{bicausal}, i.e.\ satisfy a property which reflects the temporal evolution of information in stochastic processes. We show that in the case of absolutely continuous marginals, the set of bicausal couplings is obtained precisely as the closure of the set of (bi-) adapted processes. That is, we obtain an analogue of the classical result on denseness of Monge couplings in the set of Kantorovich transport plans: bicausal transport plans represent the relaxation of adapted mappings in the same manner as Kantorovich transport plans are the appropriate relaxation of Monge-maps.
	\end{abstract}

	\maketitle

\section{Introduction}\label{sec:intro}

While the main focus lies on an adapted variant of the transport problem, we briefly recount some fundamental elements of the classical theory which will be reinterpreted in a stochastic process  context below.

\subsection{Classical Monge and Kantorovich formulation of the transport problem}

Given probabilities $\mu, \nu$ on $\R^N$ and a lower semicontinuous cost function $c: \R^N\times \R^N \to [0,\infty)$, the classical Monge problem consists in 
\begin{align}\label{Monge} \tag{MP}\inf\left\{\int c(x, T(x)) \, d\mu(x) \;\middle|\; T: \R^N \to \R^N, T_\ast\mu = \nu\right\},\end{align} 
where $T_\ast\mu:= \mu\circ T^{-1} $ denotes the push forward of $\mu$ under a transport map $T$ which is tacitly assumed to be Borel-measurable. While the formulation of the Monge problem allows for a most intuitive interpretation and Monge maps play an important role in many applications, it is paramount for the development of basic transport theory to consider the Kantorovich relaxation of \eqref{Monge}: 
\begin{align}\label{Kantoro} \tag{KP}\inf\left\{\int c(x, y) \, d\pi(x,y) \;\middle|\; \pi \in \cpl(\mu, \nu)\right\},\end{align} 
where $\cpl(\mu, \nu):=\{\pi \in \prob(\R^N\times \R^N)\:|\: \text{proj}_1(\pi) =\mu,  \text{proj}_2(\pi)=\nu\} $ denotes the set of all couplings of $\mu$ and $\nu$. The problem \eqref{Kantoro} is a technically more tractable convex optimization problem, admits a minimizer and allows for a powerful duality theory, we refer to the monographs \cite{Vi03, Vi09, Sa15, FiGl21}. 

Every Monge map $T:\R^N\to \R^N, T_\ast \mu=\nu$ gives rise to a Monge coupling $\pi_T\in \cpl(\mu, \nu)$ concentrated on the graph of the map $T$, i.e.\ $\pi_T=(\text{id}, T)_\ast\mu \in \cpl (\mu, \nu)$. In particular  the value of \eqref{Kantoro} is smaller than or equal to the one obtained in \eqref{Monge}.

More remarkably, the two values coincide under mild regularity assumptions. In fact, for continuous marginals, every Kantorovich transport plan can be approximated by Monge transports: 
\begin{theorem}[cf.\ Gangbo \cite{Ga99} and Ambrosio \cite{Am03}]\label{ClassicDense}
	Assume that $\mu, \nu\in \mathcal P (\R^N) $ are continuous\footnote{We call a measure  continuous if it does not charge singletons.}. Then the set of Monge couplings which are concentrated on the graphs of bijective mappings is dense in  $\cpl (\mu, \nu)$ w.r.t.\ the weak topology.
\end{theorem}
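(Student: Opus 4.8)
The plan is to approximate a given coupling $\pi\in\cpl(\mu,\nu)$ along finer and finer partitions by Monge couplings obtained through a combinatorial rearrangement of mass. For each $n\in\N$ fix a countable Borel partition $\R^N=\bigsqcup_{i\in\N}Q_i^n$ into sets of diameter at most $1/n$ (dyadic cubes, say) and put $p^n_{ij}:=\pi(Q^n_i\times Q^n_j)$. The elementary but decisive observation is that if $\tilde\pi\in\prob(\R^N\times\R^N)$ satisfies $\tilde\pi(Q^n_i\times Q^n_j)=p^n_{ij}$ for all $i,j$, then for every bounded Lipschitz $f$ one has $\big|\int f\,d\tilde\pi-\int f\,d\pi\big|\le 2\,\omega_f(\sqrt2/n)$, where $\omega_f$ denotes the modulus of continuity of $f$: both integrals lie within $\omega_f(\sqrt2/n)$ of $\sum_{i,j}f(x_i,y_j)\,p^n_{ij}$ for arbitrary sample points $x_i\in Q^n_i$, $y_j\in Q^n_j$. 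Since bounded Lipschitz functions determine the weak topology, it therefore suffices to produce, for each $n$, a Borel bijection $T_n\colon\R^N\to\R^N$ with $(T_n)_\ast\mu=\nu$ for which $\pi_{T_n}$ matches $\pi$ on all rectangles $Q^n_i\times Q^n_j$; the sequence $\pi_{T_n}$ then converges weakly to $\pi$.

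Fix $n$ and abbreviate $Q_i:=Q^n_i$, $p_{ij}:=p^n_{ij}$. Since $\mu$ is non-atomic, every Borel set contains Borel subsets of any prescribed intermediate $\mu$-mass, so --- using $\sum_j p_{ij}=\mu(Q_i)$ --- we may refine each cell to a Borel partition $Q_i=\bigsqcup_j A_{ij}$ with $\mu(A_{ij})=p_{ij}$; symmetrically, using that $\nu$ is non-atomic and $\sum_i p_{ij}=\nu(Q_j)$, we split $Q_j=\bigsqcup_i B_{ij}$ with $\nu(B_{ij})=p_{ij}$. For each pair with $p_{ij}>0$ the normalised restrictions $(A_{ij},\mu|_{A_{ij}}/p_{ij})$ and $(B_{ij},\nu|_{B_{ij}}/p_{ij})$ are atomless standard probability spaces, hence isomorphic modulo null sets by the isomorphism theorem for standard probability spaces; choose such an isomorphism $T_{ij}$ between conull Borel subsets of $A_{ij}$ and $B_{ij}$, so that $(T_{ij})_\ast(\mu|_{A_{ij}})=\nu|_{B_{ij}}$. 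Gluing the countably many $T_{ij}$ along their disjoint domains gives a Borel map $T_n$, defined on a $\mu$-conull set, with $(T_n)_\ast\mu=\nu$ and $\pi_{T_n}(Q_i\times Q_j)\ge\mu(A_{ij})=p_{ij}$. As both $\sum_{i,j}p_{ij}$ and $\sum_{i,j}\pi_{T_n}(Q_i\times Q_j)$ equal $1$, all these inequalities are equalities, i.e.\ $\pi_{T_n}$ matches $\pi$ on every rectangle, as required.

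It remains to arrange that $T_n$ is a genuine bijection of $\R^N$. Discarding the countably many leftover null sets, $T_n$ is already a Borel bijection between a $\mu$-conull Borel set $D$ and a $\nu$-conull Borel set $R$. On the Borel null sets $\R^N\setminus D$ and $\R^N\setminus R$ one now redefines $T_n$ to be any Borel bijection between them; such a bijection is furnished by the Borel isomorphism theorem once the two sets have the same cardinality, which one arranges, if necessary, by first transferring an uncountable $\nu$-null (resp.\ $\mu$-null) Borel subset from $R$ to $D$ before matching. Non-atomicity of $\mu$ and $\nu$ is what guarantees the existence of such auxiliary null sets. The modified $T_n$ is a Borel bijection of $\R^N$, still pushes $\mu$ to $\nu$, and does not change $\pi_{T_n}$; hence $\pi_{T_n}$ is a Monge coupling concentrated on the graph of a bijection, and $\pi_{T_n}\to\pi$ weakly.

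The conceptual skeleton above is short; the actual work is the measure-theoretic bookkeeping of the last two steps --- the exact-mass Borel splitting of the cells (a Lyapunov/intermediate-value fact for non-atomic measures), the measurable gluing of the countably many local isomorphisms, and the cardinality adjustment that promotes ``injective $\mu$-almost everywhere'' to ``bijective everywhere''. This last point is the only genuinely delicate one, and it is precisely here that non-atomicity of \emph{both} marginals is indispensable: if, say, $\mu$ carried an atom while $\nu$ did not, then no Borel map whatsoever would push $\mu$ forward to $\nu$, and the statement would fail already at the level of feasibility.
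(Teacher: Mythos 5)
Your argument is correct, but it takes a genuinely different route from the one in the paper. You approximate $\pi$ directly at the level of the original spaces: fix a fine dyadic partition $\{Q_i\}$, split each cell $Q_i$ into Borel pieces $A_{ij}$ of $\mu$-mass exactly $p_{ij}:=\pi(Q_i\times Q_j)$ (the Sierpi\'nski intermediate-value property of non-atomic measures), split each $Q_j$ into pieces $B_{ij}$ of $\nu$-mass $p_{ij}$, apply the isomorphism theorem for measures piece by piece, glue, and finally repair bijectivity on a $\mu$-null / $\nu$-null pair of Borel sets via a cardinality argument. The convergence step (matching on rectangles implies weak convergence via the modulus of continuity of bounded Lipschitz test functions) is the student-friendly twin of the paper's Lemma~\ref{lem:part_conv}, which establishes the same conclusion via an explicit coupling estimate. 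The paper instead first proves the representation result (Theorem~\ref{thm:cplrepr}): every $\pi$ is the $X\times Y$-projection of a \emph{single} bijective Monge coupling on the enlarged spaces $X\times[0,1]$ and $Y\times[0,1]$, and then obtains $T_n$ by conjugating this one bijection with partition-compatible isomorphisms $\Phi_n\colon X\to X\times[0,1]$, $\Psi_n\colon Y\to Y\times[0,1]$ from Proposition~\ref{prop:part_pushfwd}. The payoff of the paper's more indirect construction is precisely that it survives passage to the bicausal setting: the lifting to $X\times[0,1]$ is performed one time step at a time, the representation becomes a biadapted bijection, and the partition-compatible isomorphisms are replaced by their adapted counterparts; your cell-by-cell splitting, by contrast, has no evident adapted analogue, since refining $Q_i$ to prescribed conditional masses in later coordinates would destroy measurability with respect to the filtration. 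One small imprecision worth noting: you invoke non-atomicity to guarantee the auxiliary uncountable null sets in the bijectivity repair, but the weaker fact that any probability measure on an uncountable standard Borel space admits uncountable Borel null sets already suffices (cf.\ Lemma~\ref{lem:uncountableNullset}); non-atomicity is what you really use earlier, for the exact-mass Borel splitting and for the measure isomorphisms.
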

Assuming  continuous marginals, the values of  \eqref{Kantoro} and \eqref{Monge} thus coincide for continuous bounded cost functions. Remarkably, Pratelli \cite{Pr07b} has strengthend this to the case of continuous functions that are allowed to assume the value $+\infty$.

To highlight a particular consequence,  the $p$-Wasserstein distance $\W_p$ (corresponding to the  $c(x,y)= |x-y|^p, p\geq 1$) can be calculated using either the Monge or the Kantorovich formulation. That is, for continuous measures $\mu, \nu$ with finite $p$-th moments we have  \begin{align}\label{Wdistance}
	\W_p^p (\mu, \nu)= \inf_{\pi \in \cpl(\mu, \nu)}\int |x-y|^p \, d\pi(x,y) = \inf_{T :  T \text{ bijective}, T_\ast\mu = \nu}\int |x- T(x)|^p \, d\mu(x).\end{align}
The main goal of this article is to provide counterparts of these results concerning transport between laws of stochastic processes. 

\subsection{Monge- and Kantorovich transport between laws of stochastic processes} 

We are interested in probabilities $\mu, \nu\in \mathcal P(\R^N)$ which represent the laws of real-valued stochastic processes $(X_t)_{t=1}^N$ and $(Y_t)_{t=1}^N$.  In this context, adapted variants of the problems \eqref{Monge} and \eqref{Kantoro} have been considered by different groups of authors, see Section~1.3 for a brief overview. 
A main reason to depart from the classical formulation of optimal transport is that the $p$-Wasserstein distance 
%\begin{align}
%W_p^p (\mu, \nu)= \inf\left\{\int |x-y|^p \, d\pi(x,y): \pi \in \cpl(\mu, \nu)\right\}, \quad p\geq 1
%\end{align}
does not yield an adequate topology for typical concepts considered in the theory of stochastic processes. In particular,  basic operations such as the Doob decomposition or the Snell envelope and stochastic control problems or problems of pricing and hedging in mathematical finance are not continuous w.r.t.\ $\W_p$. 

An explanation for this phenomenon is that the classical formulations of optimal transport are agnostic of the temporal evolution of stochastic processes. From a process perspective, transport maps or plans are allowed to  look into the future. Specifically,  transport mappings $T=(T_1, \ldots, T_N): \R^N\to \R^N$ in \eqref{Monge} are free to use all information of the `path' $(x_1, \ldots, x_n)$ to calculate $T_1, T_2, $ etc. In other words, if we consider $T=(T_t)_{t=1}^N$ as a stochastic process on  the space $(\R^N, \mu)$ and denote by $\F_t, t < N$, the $\sigma$-algebra generated by the first $t$ coordinates, then there is no requirement on $T$ to be adapted w.r.t.\ the (canonical) filtration $(\F_t)_{t=1}^N$ in \eqref{Monge}.

This motivates the following definition: 
\begin{definition}
	A map $T=(T_t)_{t=1}^N:\R^N\to \R^N$ is called \emph{adapted}  if $T_t$ depends only on the first $t$ coordinates for $t <  N$. We call $T$ \emph{biadapted} if $T$ is bijective and $T$ as well as $T^{-1}$ are adapted. (Note that if an adapted map $T$ is a bijection, $T^{-1}$ is not necessarily adapted.)
	
	The \emph{biadapted} or \emph{bicausal}  Monge problem consists in 
	\begin{align}\label{biMonge} \tag{MP${}_{\text{ad}}$}\inf\left\{\int c(x, T(x)) \, d\mu(x)\:\middle|\: T: \R^N \to \R^N, T_\ast\mu = \nu, T  \text{ is biadapted}\right\}. 
	\end{align} 
\end{definition}
While \eqref{biMonge} admits a particularly natural interpretation, adapted/causal transport problems have  been introduced directly in a Kantorovich formulation (cf.\ \cite{Ru85, PfPi15, BiTa19, NiSu20} and the comments in  Section~1.3 below):
\begin{definition}
	Let $\mu, \nu \in \prob(\R^N)$ and  $\pi \in \cpl(\mu,\nu)$. Denoting by $(\pi^x)_{x\in \R^N} $ the disintegration of $\pi$ w.r.t.\ $\mu$, the coupling $\pi$ is called \emph{causal} if for any $t < N$ and $B \in \F_t$ the mapping $X \ni x \mapsto \pi^x(B)$ is $\F_t$-measurable. We denote the set of causal couplings between $\mu$ and $\nu$ by $\cplc(\mu,\nu)$.  
	Setting $e: \R^N \times \R^N \to \R^N \times \R^N, (x,y)\mapsto (y,x)$,  
	a causal coupling $\pi \in \cplc(\mu,\nu)$ is called \emph{bicausal} if also $e_\ast\pi \in \cplc(\nu,\mu)$. The set of bicausal couplings between $\mu$ and $\nu$ is denoted by $\cplbc(\mu,\nu)$.
	
	The \emph{biadapted} or \emph{bicausal} Kantorovich problem is given by 
	\begin{align}\label{biKantoro} \tag{KP$_{\text{ad}}$}\inf\left\{\int c(x, y) \, d\pi(x,y) \:\middle|\: \pi \in \cplbc(\mu, \nu)\right\},\end{align} 
	
	A \emph{bicausal Monge coupling} is a bicausal coupling that is supported on the graph of a function.
\end{definition}
Alternatively it would be natural to define a bicausal Monge coupling as a coupling  supported by the graph of a biadapted map. Indeed, this definition leads to the same concept, see Lemma \ref{lem:causal_monge_cpl} in the appendix.

%Indeed, a number of authors have independently considered such a version of the Wasserstein distance, see \cite{} and also the comments in the Section~08.15 below. These approaches work directly on the level of couplings. Specifically a coupling $\pi\in\cpl(\mu, \nu) $ is called causal if ....

In our main result we reconcile the  Monge and Kantorovich viewpoint of the bicausal transport problem.

\begin{theorem}\label{MainTheorem}
	Assume that $\mu, \nu\in \mathcal P (\R^N)$ are absolutely continuous w.r.t.\ Lebesgue measure. Then the set of biadapted Monge couplings between $\mu$ and $\nu$ is weakly dense in $\cplbc(\mu,\nu)$. In particular, the Monge and Kantorovich formulation agree for continuous bounded cost functions.
\end{theorem}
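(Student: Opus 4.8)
The plan is to argue by induction on the number $N$ of time steps, in the spirit of the proof of Theorem~\ref{ClassicDense} but carried out in a way compatible with the filtration. For $N=1$ adaptedness imposes no constraint, so $\cplbc(\mu,\nu)=\cpl(\mu,\nu)$ and biadapted Monge couplings are exactly graphs of Borel bijections of $\R$; since an absolutely continuous measure on $\R$ is continuous, this case is precisely Theorem~\ref{ClassicDense}. For the inductive step we use two structural facts. First, conditioning a bicausal coupling on the first coordinate yields a bicausal coupling of the conditional tail laws: writing $\mu=\mu_1(dx_1)\otimes\mu^{x_1}(d\tilde x)$ and $\nu=\nu_1(dy_1)\otimes\nu^{y_1}(d\tilde y)$ with $\tilde x,\tilde y\in\R^{N-1}$, every $\pi\in\cplbc(\mu,\nu)$ decomposes as $\pi=\pi_1(dx_1,dy_1)\otimes\pi^{x_1,y_1}(d\tilde x,d\tilde y)$ with $\pi_1\in\cpl(\mu_1,\nu_1)$ and $\pi^{x_1,y_1}\in\cplbc(\mu^{x_1},\nu^{y_1})$. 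Dually, a biadapted map of $\R^N$ is exactly a pair $(T_1,(T^{x_1})_{x_1})$ consisting of a bijection $T_1$ of $\R$ and, for each $x_1$, a biadapted map $T^{x_1}$ of $\R^{N-1}$, and such a $T$ pushes $\mu$ to $\nu$ iff $(T_1)_\ast\mu_1=\nu_1$ and $(T^{x_1})_\ast\mu^{x_1}=\nu^{T_1(x_1)}$ for a.e.\ $x_1$. Finally, absolute continuity passes to the conditionals: $\mu^{x_1}$, resp.\ $\nu^{y_1}$, is absolutely continuous on $\R^{N-1}$ for a.e.\ $x_1$, resp.\ $y_1$.

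Fix $\varepsilon>0$. Choose a sufficiently fine countable Borel partition $\{A_i\}$ of the $x_1$-line and $\{B_j\}$ of the $y_1$-line into pieces of small diameter, all but finitely many of which have total mass at most $\varepsilon$; since $\mu_1,\nu_1$ are atomless, split $A_i=\bigsqcup_j A_{ij}$ and $B_j=\bigsqcup_i B_{ij}$ with $\mu_1(A_{ij})=\nu_1(B_{ij})=\pi_1(A_i\times B_j)=:p_{ij}$, and let $T_1$ be a Borel bijection of $\R$ mapping each $A_{ij}$ onto $B_{ij}$ in a measure-preserving way, so that $(T_1)_\ast\mu_1=\nu_1$ and, the cells being small, $(\id,T_1)_\ast\mu_1$ is within $\varepsilon$ of $\pi_1$. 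Refining if necessary (via Lusin's theorem, or martingale/Lebesgue differentiation along a refining sequence of partitions) we may assume that $x_1\mapsto\mu^{x_1}$, $y_1\mapsto\nu^{y_1}$ and $(x_1,y_1)\mapsto\pi^{x_1,y_1}$ vary by at most $\varepsilon$ on each cell, outside a set of mass at most $\varepsilon$; fix representatives $\bar x_1^i\in A_i$, $\bar y_1^j\in B_j$.

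For each $(i,j)$ apply the inductive hypothesis to the absolutely continuous laws $\mu^{\bar x_1^i},\nu^{\bar y_1^j}$ on $\R^{N-1}$: there is a biadapted map $S^{ij}$ with $(S^{ij})_\ast\mu^{\bar x_1^i}=\nu^{\bar y_1^j}$ whose Monge coupling is within $\varepsilon$ of $\pi^{\bar x_1^i,\bar y_1^j}$. For $x_1\in A_{ij}$ set $T^{x_1}:=R_{T_1(x_1)}\circ S^{ij}\circ Q_{x_1}$, where $Q_{x_1}$ is a biadapted bijection of $\R^{N-1}$ with $(Q_{x_1})_\ast\mu^{x_1}=\mu^{\bar x_1^i}$ and $R_{y_1}$ a biadapted bijection with $(R_{y_1})_\ast\nu^{\bar y_1^j}=\nu^{y_1}$ — for instance Knothe--Rosenblatt maps, which are triangular, hence biadapted, and depend measurably on the underlying measures, so that $x_1\mapsto T^{x_1}$ is Borel. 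Then $T:=(T_1,(T^{x_1})_{x_1})$ is a biadapted bijection of $\R^N$, and by the telescoping choice of $Q,S,R$ one gets $T_\ast\mu=\nu$ \emph{on the nose}; the maps $Q_{x_1},R_{y_1}$ are present precisely to repair the marginals, since $S^{ij}$ only intertwines the representative conditionals, not $\mu^{x_1}$ with $\nu^{T_1(x_1)}$. It then remains to compare $(\id,T)_\ast\mu$ with $\pi$: summing over $(i,j)$, it suffices that on the mass routed from $A_i$ to $B_j$ both measures agree up to $O(\varepsilon)$ with $p_{ij}\,\delta_{\bar x_1^i}\otimes\delta_{\bar y_1^j}\otimes\pi^{\bar x_1^i,\bar y_1^j}$ — for $\pi$ because $\pi^{x_1,y_1}\approx\pi^{\bar x_1^i,\bar y_1^j}$ on the cell, and for $(\id,T)_\ast\mu$ because $(\id,S^{ij})_\ast\mu^{\bar x_1^i}\approx\pi^{\bar x_1^i,\bar y_1^j}$ and $Q_{x_1},R_{T_1(x_1)}$ are close to the identity (as $\mu^{x_1}\approx\mu^{\bar x_1^i}$ and $\nu^{T_1(x_1)}\approx\nu^{\bar y_1^j}$). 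This proves the density statement; the assertion about costs follows at once, since weak convergence of probability measures entails convergence of $\int c\,d\pi$ for bounded continuous $c$, whence the infima in \eqref{biMonge} and \eqref{biKantoro} coincide.

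The crux is the inductive step, and within it the tension between two requirements on the approximating object: it must transport $\mu$ onto $\nu$ \emph{exactly}, so as to be a bona fide Monge coupling, yet stay \emph{weakly close} to the given $\pi$, which — being a coupling rather than a map — spreads each fixed $x_1$ over a whole range of $y_1$'s. Reconciling these is what forces the correction maps $Q_{x_1},R_{y_1}$, and the decisive quantitative input is that a canonical triangular (Knothe--Rosenblatt) transport between two weakly close absolutely continuous measures on $\R^{N-1}$ is itself close to the identity in probability, so that inserting the corrections does not damage the approximation. A more routine point to dispatch is the measurable dependence on $x_1$ of the entire tail construction, cleanest to handle by proving a parametrized version of the statement from the outset.
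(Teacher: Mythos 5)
Your high-level plan is correct in structure: the inductive characterizations you use (a bicausal coupling is $\pi_1$ together with a.s.-bicausal conditional kernels $\pi^{x_1,y_1}$, and a biadapted map is $T_1$ together with biadapted $T^{x_1}$) are exactly the paper's Corollary~\ref{cor:inductiveCharBicausal} and Lemma~\ref{lem:indCharOfBiadapted}, and the reduction to the static case at $N=1$ is right. However, the inductive step has a genuine gap at the place you yourself flag as ``the decisive quantitative input'': the assertion that a Knothe--Rosenblatt map between two \emph{weakly close} absolutely continuous measures on $\R^{N-1}$ is close to the identity in probability. This is false. The Knothe--Rosenblatt map is built from the full hierarchy of iterated disintegrations, and disintegration is not continuous with respect to weak convergence — this is precisely the instability that motivates adapted Wasserstein distances in the first place. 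Concretely, take on $[0,1]^2$ the density $1+a\,\sin(2\pi n x_1)\,\mathrm{sgn}(x_2-\tfrac12)$: as $n\to\infty$ this converges weakly to the uniform density, yet for every $n$ there is a set of $x_1$ of measure bounded away from $0$ on which the conditional law in $x_2$ is at Wasserstein distance of order $a$ from uniform, so the second stage of the Knothe--Rosenblatt correction stays bounded away from the identity. Closeness of the conditional laws $\mu^{x_1}\approx\mu^{\bar x_1^i}$ obtained from Lusin's theorem is only weak closeness on $\prob(\R^{N-1})$; it gives no control on the deeper conditionals $\mu^{x_{1:t}}$ that your maps $Q_{x_1}$ must match, and the cells on which the various Lusin approximations would hold have no reason to be aligned with the cells $A_{ij}$ whose masses must equal $p_{ij}$. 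Strengthening to continuous, bounded-above-and-below densities would rescue the stability claim but would fall well short of the stated hypothesis of absolute continuity.

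For comparison, the paper's route avoids this difficulty entirely. Theorem~\ref{thm:cpl_repr_timestep} represents any bicausal $\pi$ \emph{exactly} as $(\id,T)_\ast\widehat\mu$ for a biadapted bijection $T:\widehat X\to\widehat Y$ between the path spaces augmented by one $[0,1]$-randomization per time step; then Proposition~\ref{prop:part_pushfwd_adapted} supplies biadapted bijections $\Phi_n:X\to\widehat X$, $\Psi_n:Y\to\widehat Y$ which push $\mu$, $\nu$ to $\widehat\mu$, $\widehat\nu$ cell-by-cell on refining partitions, and the approximants $T_n:=\Psi_n^{-1}\circ T\circ\Phi_n$ are genuine biadapted bijections $X\to Y$ with $\pi_n:=(\id,T_n)_\ast\mu$ matching $\pi$ exactly on product cells, whence $\pi_n\weaklyto\pi$ by Lemma~\ref{lem:part_conv}. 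No stability of conditional laws is ever invoked; the ``correction'' is absorbed once and for all into $T$ on the extended space, where the extra $[0,1]$-coordinates carry all the randomness that a bijection of $\R^N$ alone cannot. It is worth noting that the paper explicitly remarks, when discussing \cite{BeLa20}, that the approximate-product-structure viewpoint ``seems to be not useful in the present more rigid case of bicausal transport'' — precisely the kind of discretization-plus-correction route you are proposing. Your parenthetical suggestion to prove a parametrized version from the outset to handle measurability in $x_1$ is in agreement with the paper's strategy (Theorems~\ref{thm:cpl_repr} and \ref{thm:cpl_repr_timestep2} are exactly such parametrized statements), but it does not repair the stability gap.
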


\begin{remark}\label{rem:intro}
	\begin{enumerate}[(i)]
		\item 
		Let $p \in[1,\infty)$. If $\mu$ and $\nu$ have finite $p$-th moments, we have also $\W_p$-denseness in Theorem~\ref{MainTheorem} (see Theorem~\ref{thm:main}). Therefore,  the adapted analogue $\AW_p$ of the Wasserstein distance between absolutely continuous measures can be defined using either \eqref{biMonge} or \eqref{biKantoro}
		\begin{align*}\label{AWdistance}
			\AW_p^p (\mu, \nu)= \inf_{\pi \in \cplbc(\mu, \nu)}\int |x-y|^p \, d\pi(x,y) = \inf_{T:  T \text{ biadapted}, T_\ast\mu = \nu}\int c(x, T(x)) \, d\mu(x).\
		\end{align*}
		We also note that $\AW_p, p\geq 1$ provides an adequately strong topology to rectify the above mentioned shortcomings of its classical counterpart, e.g.\    Doob decomposition  and optimal stopping are (Lipschitz-) continuous w.r.t.\  $\AW_p$ (see \cite{BaBePa21, AcBaZa20}), operations such pricing, hedging and utility maximization in mathematical finance are continuous, \cite{Do14, GlPfPi17, BaBaBeEd19a, BaDoDo20} etc. On the other hand, it appears that $\AW_p$ is not \emph{overly strong}. E.g.\ the topology generated by $\AW_p$ is essentially the weakest topology which guarantees continuity of optimal stopping problems (\cite[Section~1.5]{BaBaBeEd19b}).
		%We note that if $\mu, \nu$ have finite $p$-th moments, than weak convergence and convergence in the $p$-Wasserstein distance agree on $\cpl(\mu, \nu)$. In particular,  the adapted analogue $\AW$ of the $p$-Wasserstein distance between absolutely continuous measures can be defined either using the Kantorovich or the Monge-formulation. 

		\item Compared to the classical case, our assumptions in Theorem~\ref{MainTheorem} are stronger in that we require \emph{absolute} continuity of the underlying marginal measures. In fact a (significant) strengthening of continuity is necessary, compare Example \ref{AbsolutelyImportant}.
		
		We also note that while we restrict to optimal transport on $\R^N$ in this introductory section
		to keep notation light, the results on classical transport theory discussed above are equally valid for abstract Polish spaces. Likewise Theorem~\ref{MainTheorem} holds true in the Polish setting (see Theorem \ref{thm:main}), 
		subject to replacing the absolute continuity assumption by Assumption \ref{asn:cont_disint}.

		\item\label{it:rem_intro} An auxiliary result used in the proof of our main Theorem~\ref{MainTheorem} that might be interesting on its own right is that any coupling can be obtained as a projection of a Monge coupling that is supported on the graph a bijection between extended spaces (Theorem~\ref{thm:cplrepr}). The same is true in the bicausal case (Theorem~\ref{thm:cpl_repr_timestep}).

		%can be understood as a randomized Monge coupling that is supported on the graph of a bijection (Theorem~\ref{thm:cplrepr}) and that any bicausal coupling can be seen as a randomized bicausal Monge coupling (Theorem~\ref{thm:cpl_repr_timestep}).
		
		%\item While have consider the case of probabilities supported on $\R^N$ for ease of exposition, most of our arguments are valid and in fact more naturally spelled our when replacing the real line with an arbitrary Polish space
	\end{enumerate}
\end{remark}

%
%For classical optimal transport it is well known that the Kantorovich problem can be seen as a relaxation of the Monge problem: If $\mu$ and $\nu$ are continuous probability measures on Polish spaces $X$ and $Y$ the set of Monge couplings between $\mu$ and $\nu$  is dense in the Kantorovich couplings between $\mu$ and $\nu$. Hence, if $c: X \times Y \to \R$ is a continuous bounded cost function, the Monge and the Kantorovich problem yield the same value.
%
%The aim of this paper is to show that the same holds for bicausal transport in finite discrete time. Bicausal Monge couplings are couplings that are supported by the graph of so-called biadapted mappings. That are a bijections $T : X^N \to Y^N$ pushing $\mu$ to $\nu$ such that for all $t<N$ the first $t$ coordinates of $T(x_1,\dots,x_N)$ only depend on $(x_1,\dots,x_t)$ and the first $t$ coordinates of $T^{-1}(y_1,\dots,y_N)$ only depend on $(y_1,\dots,y_t)$.
%
%Our main result is that these bicausal Monge couplings are dense in within the bicausal couplings if $\mu \in \prob(X^N)$ and $\nu \in \prob(Y^N)$ are fixed marginals satisfying suitable regularity conditions (see Theorem~\ref{thm:main}). This implies that the bicausal transport problem and its Monge version yield the same value, see Corollary \ref{cor:equal_values_KP_MP_BMP}.
%

\subsection{Related literature on `adapted' variants  of optimal transport}

Most directly related to the present article is the work \cite{BeLa20} which shows that under appropriate assumptions couplings which are concentrated on the graphs of adapted functions are dense in set of all causal couplings. Remarkably, the methods used in \cite{BeLa20} differ substantially from the ones used in the present article. Specifically, the approach of \cite{BeLa20} aims to reduce the problem to the case where the first marginal admits an approximate product structure. This viewpoint seems to be not useful in the present more rigid case of bicausal transport.   

The use of `causality' constraints between transport plans between  processes  originates in the famous work of Yamada--Watanabe  \cite{YW} and is used under the name `compatibility' by Kurtz \cite{Ku07}. 
A systematic treatment  of  causality (under that name) as an interesting property of abstract transport plans between stochastic processes goes back to   Lassalle \cite{La18}.

Several groups of researchers  from different areas have independently arrived at (roughly) equivalent  adapted variants of the Wasserstein distance, this includes the works of 
Vershik \cite{Ve70, Ve94}, 
R\"{u}schendorf \cite{Ru85},  Gigli \cite[Chapter 4]{Gi04},  Pflug and Pichler \cite{PfPi12}, Bion-Nadal and Talay \cite{BiTa19}, and Nielsen and Sun \cite{NiSu20}. Pflug and Pichler use the name `nested distance' and demonstrate its significant potential in  multistage stochastic programming, see \cite{PfPi14, PfPi15, GlPfPi17, KiPfPi20, PiSh21}  among others. 

Apart from `adapted' Wasserstein distances, 
refinements of the weak topology that take the temporal  flow of information inherent to stochastic processes into account were considered by Aldous (\cite{Al81}, stochastic analysis),   Hoover and Keisler   (\cite{HoKe84, Ho91}, mathematical logic), Hellwig  (\cite{He96}, economics) and Bonnier, Liu, and Oberhauser (\cite{BoLiOb23}, rough path theory).
The use of such extended  weak topologies in mathematical finance starts with Dolinsky \cite{Do14},  further contributions are \cite{GlPfPi17, AcBaZa20, BaDoGu20, BaDoDo20, AcBePa20, BaBaBeEd19a}.

\subsection*{Organization of the paper}
Chapter \ref{sec:static_case} covers the static case: In Section~\ref{subsec:Representation1} we prove the representation result mentioned in Remark~\ref{rem:intro}(\ref{it:rem_intro}), which allows us to  recover the classical denseness result Theorem~\ref{ClassicDense}
from it in Section~\ref{subsec:Denseness1}. This new proof of the well-known denseness result will be extended to a proof of the time-dependent case in Chapter~\ref{sec:time_dependent}. We extend the representation result from the static to the time dependent case in Section~\ref{subsec:Representation2} and use it to prove the denseness result in Section~\ref{subsec:Denseness2}.

%Chapter \ref{sec:static_case} covers the static case: We first prove the randomization result in Section~\ref{subsec:Representation1} and recover the classical denseness result Theorem~\ref{ClassicDense} from it in Section~\ref{subsec:Denseness1}. This new proof of the well-known denseness result will be extended to a proof of the time-dependent case in Chapter~\ref{sec:time_dependent}. We extend the randomization result from the static to the time dependent case in Section~\ref{subsec:Representation2} and use it to prove the denseness result in Section~\ref{subsec:Denseness2}.

\subsection*{Notation}

In the rest of paper we will work (primarily) on abstract Polish spaces rather than on the real line. On the one hand the arguments would be (essentially) identical on the real line and on the other hand the abstract setting is often more convenient in view of notation. We take the liberty to use terms introduced above also in this more general framework and trust that there is no danger of confusion (e.g.\ $\cpl(\mu, \nu)$ will still denote the set of couplings, $\cpl_{bc}(\mu, \nu)$ denotes the set of bicausal couplings between probabilities $\mu, \nu$ on $N$-fold products $X^N, Y^N$ of Polish spaces, we write $e$ for the mapping $X\times Y\to Y\times X, (x,y) \mapsto (y,x)$, etc.

Throughout Polish spaces and standard Borel spaces 
will be denoted by capital letters, such as $X$ or  $Y$. (Recall that a  standard Borel space is a measurable space whose $\sigma$-algebra is induced by a Polish topology.) Collections of subsets of them (e.g.\ topologies or $\sigma$-algebras) will be denoted by calligraphic letters such as $\B$ or $\T$. We will always equip the spaces $X$ and $Y$ with the Borel $\sigma$-algebra generated by their Polish topology. The measurability of mappings is always to be understood  w.r.t.\ the Borel $\sigma$-algebra. A Borel isomorphism is a Borel measurable bijection between standard Borel spaces (its inverse is Borel as well, cf.\ Section~\ref{APrelimSec}). In order to exclude trivial cases, we will assume that Polish spaces and standard Borel spaces are always uncountable (and hence have cardinality of the continuum, see Theorem~\cite[Theorem~13.6]{Ke95}).

Probability measures are denoted with small Greek letters such as $\mu,\nu$ and $\pi$. The Lebesgue measure on $[0,1]$ is denoted by $\lambda$. Given a Borel mapping $f: X \to Y$ and measure $\mu \in \prob(X)$, we denote the push forward of $\mu$ under $f$ as $f_\ast\mu$, i.e.\ $f_\ast\mu(A):= \mu(f^{-1}(A))$ for all $A\subset Y$ Borel. For a further mapping $g : Y \to Z$ we define $g_\ast f_\ast\mu := g_\ast(f_\ast \mu) = (g \circ f)_\ast\mu$ to avoid unnecessary brackets.

We denote the space of probability measures on $X$ by $\prob(X)$ and equip it with the weak convergence, i.e.\ $\mu_n \weaklyto \mu$ if $\int f d \mu_n \to \int f d\mu$ for all $f: X \to \R$ continuous and bounded. 
For a Polish space $X$ with compatible metric $d$ and $p \in [1,\infty)$, let $\prob_p^d(X)$ be the set of $\mu \in \prob(X)$ s.t.\ $\int d^{p}(x_0,x)\ d\mu(x)< \infty$ for some (and therefore any) $x_0 \in X$. On this space we have the $p$-Wasserstein metric
\[ 
\W_p^d(\mu,\nu)^p  := \inf \left\{   \int d(x,y)^p\, d\pi(x,y)  \big| \pi \in \cpl(\mu,\nu)  \right\}.
\]

%where $\cpl(\mu,\nu)$ is the set of couplings between $\mu$ and $\nu$. 
If it is clear from the context which metric on $X$ we consider, we will just write $\prob_p(X)$ instead of $\prob_p^d(X)$ and $\W_p$ instead of $\W_p^d$.

If $(X,d_X)$ and $(Y,d_Y)$ are metric spaces the product space $X \times Y$ is always equipped with the product metric $d_{X \times Y}^p := d_X^p + d_Y^p$. 

A kernel from $Z$ to $X$ is a Borel function $\pi : Z \to \prob(X)$. We denote the probability measure $\pi(z)$ as $\pi^z$. We introduce a similar notation for functions: Given a function $F : Z \times X \to Y$ (which can also be seen as a function $F: Z \to Y^X$) and $z \in Z$ we define the function $F^z : X \to Y$ as $F^z(x)=F(z,x)$.

%For spaces $X$ and $Y$ we introduce the function $e : X \times Y \to Y \times X : (x,y) \mapsto (y,x)$, i.e.\ $e$ exchanges the order of $X$ and $Y$. 

\section{The static case}\label{sec:static_case}
The main result of this chapter is that couplings between $\mu$ and $\nu$ that are supported by the graph of a bijection are dense in the set of couplings between $\mu$ and $\nu$, if $\mu$ and $\nu$ are continuous. This result is well known, but we develop a method to prove it, which can be extended to the time-dependent case in order to prove new results in Section~\ref{sec:time_dependent}.

\subsection[Representation of couplings as bijective Monge couplings]{Couplings as projection of couplings supported on the graph of bijections}\label{subsec:Representation1}
In this section we will show that any coupling between $\mu$ and $\nu$ can be obtained as projection of a coupling between $\mu \otimes \lambda$ and $\nu \otimes \lambda$ that is supported on the graph of a bijection (Theorem~\ref{thm:cplrepr}). Before we start, we introduce some notation:
For $\mu \in \prob(X)$ and $\nu \in \prob(Y)$ we write    \[\cplm(\mu,\nu):= \{ (\id,T)_\ast\mu \;|\; T : X \to Y,\: T_\ast\mu=\nu  \} \subset \cpl(\mu,\nu)\]
for the set of Monge couplings between $\mu$ and $\nu$ and 
\[\cplbm(\mu,\nu):= \{ (\id,T)_\ast\mu \;|\; T : X \to Y \text{ bijective, } T_\ast\mu=\nu  \} \subset \cplm(\mu,\nu)\]
for the set of bijective Monge couplings between $\mu$ and $\nu$.

A relation $R \subset X \times Y$ is the graph of a bijection from $X$ to $Y$ if and only if $R$ is the graph of a mapping from $X$ to $Y$ and the inverse relation $R^{-1} \subset Y \times X$ is the graph of a mapping from $Y$ to $X$. It is straightforward to see that the same is true for couplings, i.e.\ we have
%\begin{lemma}\label{lem:bijMonge}
$\pi \in \cplbm(\mu,\nu)$ if and only if $\pi \in \cplm(\mu,\nu)$ and $e_\ast\pi \in \cplm(\nu,\mu)$.
%\end{lemma}
%\begin{proof}
%See Appendix \ref{subsec:proofs}.
%\end{proof}

The aim  of this section is to show the following:

\begin{theorem}\label{thm:cplrepr}
	Let $X$ and $Y$ be standard Borel spaces, $\mu \in \prob(X)$, $\nu \in \prob(Y)$ and let $\pi \in \cpl(\mu,\nu)$. Then there exists a measurable bijection $T: X \times [0,1] \to Y \times [0,1]$ satisfying the following properties:
	\begin{enumerate}[(i)]
		\item $T_\ast (\mu \otimes \lambda) = \nu \otimes \lambda$
		\item Writing $pr_{XY}: X \times [0,1] \times Y \times [0,1] \to X \times Y$ for the projection onto $X\times Y$ and $\widehat\pi := (id,T)_\ast(\mu \otimes \lambda)$, we have  ${pr_{XY}}_\ast\widehat\pi=\pi$.
	\end{enumerate}
\end{theorem}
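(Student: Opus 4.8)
The plan is to realize $\pi$ as the $X\times Y$-projection of the graph of a measure isomorphism between the enlarged probability spaces $(X\times[0,1],\mu\otimes\lambda)$ and $(Y\times[0,1],\nu\otimes\lambda)$, built so as to be ``compatible with $\pi$''. Throughout I identify $[0,1]$ with $[0,1]\times[0,1]$ by a Borel isomorphism carrying $\lambda$ to $\lambda\otimes\lambda$; it then suffices to produce a Borel bijection $T\colon X\times[0,1]^2\to Y\times[0,1]^2$ with $T_\ast(\mu\otimes\lambda^2)=\nu\otimes\lambda^2$ and with $\bigl(pr_Y\circ T(x,\cdot)\bigr)_\ast\lambda^2=\pi^x$ for $\mu$-a.e.\ $x$, where $(\pi^x)_{x\in X}$ is the disintegration of $\pi$ with respect to $\mu$. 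Indeed, the first property yields (i), and the second gives ${pr_{XY}}_\ast\widehat\pi=\pi$ for $\widehat\pi=(\id,T)_\ast(\mu\otimes\lambda^2)$; measurability of $T^{-1}$ is automatic by the Lusin--Souslin theorem, and transporting $T$ back along the above identification produces the $T$ of the statement.

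First I would invoke the standard randomization lemma for kernels between standard Borel spaces: there is a Borel map $S_0\colon X\times[0,1]\to Y$ with $(S_0(x,\cdot))_\ast\lambda=\pi^x$ for $\mu$-a.e.\ $x$, and, writing $(\rho^y)_{y\in Y}$ for the disintegration of $\pi$ with respect to $\nu$ (so $\pi=\int\rho^y\otimes\delta_y\,d\nu(y)$), a Borel map $S_0'\colon Y\times[0,1]\to X$ with $(S_0'(y,\cdot))_\ast\lambda=\rho^y$ for $\nu$-a.e.\ $y$. I then consider the Borel maps
\[
S(x,u_1,u_2):=(x,S_0(x,u_1)),\qquad S'(y,v_1,v_2):=(S_0'(y,v_1),y),
\]
from $X\times[0,1]^2$ and $Y\times[0,1]^2$ into $X\times Y$. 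A direct computation gives $S_\ast(\mu\otimes\lambda^2)=\pi=S'_\ast(\nu\otimes\lambda^2)$, and the disintegrations of $\mu\otimes\lambda^2$ along $S$ and of $\nu\otimes\lambda^2$ along $S'$, taken over $(X\times Y,\pi)$, are atomless for $\pi$-a.e.\ point: this is exactly why the auxiliary coordinates $u_2,v_2$ are introduced, as the leftover $\lambda$ keeps the fibers atomless even when $S_0,S_0'$ ``consume'' all of $u_1$ resp.\ $v_1$.

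The crux is then a relative version of the isomorphism theorem for atomless standard Borel probability spaces: if $(\Omega,P)$, $(\Omega',P')$ are atomless standard Borel probability spaces and $g\colon\Omega\to R$, $g'\colon\Omega'\to R$ are Borel maps into a standard Borel space with $g_\ast P=g'_\ast P'=:\rho$ and with $\rho$-a.e.\ atomless disintegrations, then both spaces are isomorphic over $R$ to $(R\times[0,1],\rho\otimes\lambda)$; in particular there is a Borel isomorphism $\Phi\colon\Omega\to\Omega'$ with $\Phi_\ast P=P'$ and $g'\circ\Phi=g$ holding $P$-a.e. I apply this with $(\Omega,P,g)=(X\times[0,1]^2,\mu\otimes\lambda^2,S)$ and $(\Omega',P',g')=(Y\times[0,1]^2,\nu\otimes\lambda^2,S')$, and set $T:=\Phi^{-1}$. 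Then $T_\ast(\mu\otimes\lambda^2)=\nu\otimes\lambda^2$ holds exactly, while $S=S'\circ T$ holds $\mu\otimes\lambda^2$-a.e.; comparing first and second coordinates in the definitions of $S$ and $S'$, the latter identity forces $pr_Y(T(x,u))=S_0(x,u_1)$ for a.e.\ $(x,u)$, hence $(pr_Y\circ T(x,\cdot))_\ast\lambda^2=(S_0(x,\cdot))_\ast\lambda=\pi^x$ for $\mu$-a.e.\ $x$, which is what is needed.

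The main obstacle is the relative isomorphism theorem above. I would prove it by taking a Borel disintegration $P=\int P^r\,d\rho(r)$ into atomless probability measures supported on the fibers $g^{-1}(r)$, and then selecting, jointly Borel in $r$, a Borel isomorphism $(g^{-1}(r),P^r)\cong([0,1],\lambda)$: the classical single-space isomorphism theorem supplies one for each $r$, and a measurable selection / inverse-distribution-function argument makes the choice depend measurably on $r$; gluing these maps identifies $(\Omega,P,g)$ with $(R\times[0,1],\rho\otimes\lambda,pr_R)$, and similarly for $(\Omega',P',g')$, which composed give $\Phi$. All the ``a.e.'' clauses fail only on Borel $\rho$-null (resp.\ $P$-, $P'$-null) sets, and these can be absorbed into $\Phi$ since atomless standard Borel probability spaces remain mutually Borel isomorphic after deleting null sets. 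This ``Borel field of fiberwise isomorphisms'' is a classical fact about Lebesgue spaces and can simply be cited if a convenient reference is available; otherwise the selection argument, although routine, is somewhat technical.
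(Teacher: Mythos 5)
Your proposal is essentially correct and reaches the same conclusion, but organizes the argument around a different key lemma. Where the paper builds the bijection $S$ explicitly --- it takes parameterized Borel isomorphisms $G^{x}: Y\times[0,1]\to[0,1]^2$ pushing $\pi^{x}\otimes\lambda$ to $\lambda^2$, and $H^{y}: X\times[0,1]\to[0,1]^2$ pushing $\pi^{y}\otimes\lambda$ to $\lambda^2$ (Corollary~\ref{cor:borel_iso_parametr}), and then defines $S$ by $(y_1,y_2)=(G^{x_1})^{-1}(u_1,u_2)$, $(v_1,v_2)=H^{y_1}(x_1,x_2)$, verifying bijectivity and the push-forward properties by hand --- you instead pass through the common base $(X\times Y,\pi)$ and invoke a relative (fibered) isomorphism theorem that identifies $(X\times[0,1]^2,\mu\otimes\lambda^2,S)$ and $(Y\times[0,1]^2,\nu\otimes\lambda^2,S')$ with $(X\times Y\times[0,1],\pi\otimes\lambda,\pr_{XY})$. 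These are genuinely different organizations: the paper's threading of the intermediate variable $y_1$ through the two kernel isomorphisms $G$ and $H$ lets it avoid ever parametrizing the fibers of a map like your $S$ over $X\times Y$, so that the only isomorphism theorem needed is the ``kernel'' version where the total space is a fixed product (Theorem~\ref{thm:borel_iso_parametr}); your argument needs the mildly stronger Rokhlin-type statement for an arbitrary Borel $g$. What you gain in return is a cleaner conceptual picture and the general fibered-isomorphism lemma, which is reusable. The caveat is that you are black-boxing exactly the technical heart of the matter --- producing a jointly Borel field of fiberwise isomorphisms and upgrading the resulting a.e.\ isomorphism to an honest Borel bijection --- which is what the paper's appendix spends Lemma~\ref{lem:set_const_Borel}, Proposition~\ref{prop:perfect_nullset}, Theorem~\ref{thm:borel_iso_parametr} and Mauldin's Borel parametrization theorem (Theorem~\ref{thm:Borel_param}) on; it is not a one-line citation, though the paper's machinery (tensoring with $\lambda$ to kill atoms, absorbing null sets into compact perfect sets, Mauldin for the uniform parametrization) would prove your relative isomorphism theorem too. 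One small slip: you set $T:=\Phi^{-1}$ but then use $T_\ast(\mu\otimes\lambda^2)=\nu\otimes\lambda^2$ and $S=S'\circ T$, which are the properties of $\Phi$ itself; you want $T:=\Phi$.
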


Loosely speaking, the theorem says that it suffices to consider bijective Monge couplings with the trade-off of replacing the spaces $X$ and $Y$ by the bigger spaces $X \times [0,1]$ and $Y \times [0,1]$.

\begin{remark} \label{rem:easyy_repr}
	Before we start the proof, we note that a version of this result is well  known for (not necessarily bijective) Monge couplings (c.f. {\cite[Lemma~3.22]{Kall02}}): For any $\pi \in \cpl(\mu,\nu)$ there is a $\widehat{\pi} \in \cplm(\mu \otimes \lambda, \nu)$ s.t.\ we can recover $\pi$ from $\widehat \pi$ when projecting from $X \times [0,1] \times Y$ onto $X \times Y$. 
	
	To see this, assume $X=Y=[0,1]$ (c.f.\ Section~\ref{APrelimSec}) and set  $T(x,u):= F_{\pi^x}^{-1}(u)$. Clearly, $T(x,\cdot)_\ast\lambda=\pi^x$. If $f: X \times Y \to \R$ is a Borel function, we see that
	\begin{align*}
		\int \!\! f(x,y)d (\id,T)_\ast(\mu \otimes \lambda) = \!\!\int\!\! f(x,F_{\pi^x}^{-1}(u)) d\lambda(u)d\mu(x) = \!\!\int\!\! f(x,y) d\pi^x(y)d\mu(x)=\!\!\int\!\!  f(x,y) d\pi(x,y),
	\end{align*}
	so ${\pr_{XY}}_\ast T_\ast(\mu\otimes\lambda)=\pi$ and hence $T_\ast(\mu \otimes \lambda) \in \cpl(\mu\otimes\lambda,\nu)$. 
\end{remark}

If $\pi =(\id,T)_\ast\mu \in \cpl(\mu,\nu)$ is a Monge coupling, the conditional probabilities w.r.t.\ the first coordinate are Dirac measures, i.e.\ $\pi^x=\delta_{T(x)}$. However, the conditional probabilities w.r.t.\ the second coordinate do not need to be Dirac, unless the Monge mapping is injective. So, in a certain sense a Monge coupling can still contain randomness (given some $y \in Y$ one can in general not determine ``from which $x$ the mass in $y$ came''). Hence, it is reasonable %that a representation with Monge couplings supported on the graph of a bijection will  need randomization in $X$ and $Y$.
to enlarge both $X$ and $Y$ to $X \times [0,1]$ and $Y \times [0,1]$ in order to obtain a representation of $\pi \in \cpl(\mu,\nu)$ as a coupling that is concentrated on the graph of a bijection.

The idea of the proof of Theorem~\ref{thm:cplrepr} is to consider the following mapping:
\[
T : X \times [0,1] \to Y \times [0,1] : (x,u) \mapsto (y,v), \text{ where } y=F_{\pi^x}^{-1}(u), v=F_{\pi^y}(x).
\]
As in Remark \ref{rem:easyy_repr}, we calculate $y$ given some $x$ and the 
extra parameter $u$ via $y=F_{\pi^x}^{-1}(u)$. For the definition of $v$ we observe that this parameter belongs to the $Y$-component and that we already have a prescribed value for $y$. So, we have to ask: ``given $y$ and knowing that the result of our calculation is $x$, what is the suitable value for $v$?'' Therefore, $v$ should satisfy $x=F_{\pi^{y}}^{-1}(v)$, so formally $v=F_{\pi^y}(x)$. Moreover, it is easy to check that the mapping\footnote{One has to be slightly careful when reading the definition of $S$: Basically, $S$ is the same mapping as $T$ but for the coupling $e_\ast\pi \in \cpl(\nu,\mu)$, where $e(x,y):=(y,x)$. Hence, the $F_{\pi^y}^{-1}$ in definition of $S$ are the quantile functions of $\pi$ conditioned on some $y \in Y$, i.e.\ we do not have just changed the names of the variables $x$ and $y$ when defining an inverse function, in fact we disintegrate w.r.t.\ to another coordinate than in the definition of $T$. 
}
\[
S : Y \times [0,1] \to X \times [0,1] : (y,v) \mapsto (x,u), \text{ where } x=F_{\pi^y}^{-1}(v), u=F_{\pi^x}(y)
\] 
is the inverse of $T$ provided that $F_{\pi^x}$ and $F_{\pi^y}$ are bijective for  all $x \in X$ and $y \in Y$. However, this is only true for couplings $\pi$, whose conditional probabilities $\pi^x$ and $\pi^y$ are non-atomic. 

We will prove Theorem~\ref{thm:cplrepr} with a slightly more elaborate version of the construction presented above to overcome this issue.  Moreover, we will prove a parameterized version of this theorem in order to avoid measurability issues later on.

\begin{theorem}\label{thm:cpl_repr}
	Let $X, Y, Z$ be standard Borel spaces and $\pi$ a kernel from $Z$ to $X\times Y$. Denote $\mu$ the kernel from $Z$ to $X$ defined by $\mu^z := {\pr_X}_\ast \pi^z$ and $\nu$ the kernel from $Z$ to $Y$ defined by $\nu^z := {\pr_Y}_\ast \pi^z$, i.e.\ $\pi^z \in \cpl(\mu^z,\nu^z)$ for all $z \in Z$.
	
	Then there exists a Borel mapping $T : Z \times X \times [0,1] \to Y \times [0,1]$ s.t.\ for all $z \in Z$ the mappings $T^z: X \times [0,1] \to Y \times [0,1] : (x,u) \mapsto T(z,x,u)$ are Borel isomorphisms satisfying
	\begin{enumerate}[(i)]
		\item ${T^z}_\ast (\mu^z \otimes \lambda) = \nu^z \otimes \lambda$
		\item ${\pr_{XY}}_\ast (id,T^z)_\ast (\mu^z \otimes \lambda)= \pi^z$.
	\end{enumerate}
\end{theorem}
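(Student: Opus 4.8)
The plan is to construct $T$ explicitly using conditional quantile functions, handling the possible atomicity of the conditional measures $\pi^{z,x}$ and $\pi^{z,y}$ by spreading mass over the auxiliary $[0,1]$-coordinates. First I would recall that for a probability $\rho$ on a standard Borel space we may assume $X = Y = [0,1]$ (appealing to the Borel isomorphism reductions from Section~\ref{APrelimSec}), so that the cumulative distribution function $F_\rho$ and its generalized inverse $F_\rho^{-1}$ are available and depend measurably on $\rho$. Write $\pi^{z,x}$ for the disintegration of $\pi^z$ with respect to $\mu^z$ (first coordinate) and $\pi^{z,y}$ for the disintegration of $e_\ast\pi^z$ with respect to $\nu^z$ (second coordinate); these can be chosen jointly measurable in $(z,x)$ resp.\ $(z,y)$ by the standard disintegration theorem for kernels.

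The naive map $(x,u)\mapsto(F_{\pi^{z,x}}^{-1}(u),\,F_{\pi^{z,y}}(x))$ fails to be a bijection when $\pi^{z,x}$ or $\pi^{z,y}$ has atoms, since then $F^{-1}$ is not injective and $F$ is not surjective. The fix is to use the auxiliary $u$-coordinate to randomize \emph{within} an atom: when the chosen $y = F_{\pi^{z,x}}^{-1}(u)$ is an atom of $\pi^{z,x}$, the value $u$ ranges over an interval of length $\pi^{z,x}(\{y\})$, and we rescale that interval affinely onto $[0,1]$ to produce the new second coordinate $v$; symmetrically for the $v\mapsto u$ direction. Concretely one sets, for $y = F_{\pi^{z,x}}^{-1}(u)$,
\[
  v \;=\; \frac{u - F_{\pi^{z,x}}(y^-)}{\pi^{z,x}(\{y\})} \quad\text{if } \pi^{z,x}(\{y\})>0,\qquad v = F_{\pi^{z,y}}(x) \text{ otherwise,}
\]
and checks that the analogous formula built from $e_\ast\pi^z$ gives the inverse. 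One then verifies directly, as in Remark~\ref{rem:easyy_repr}, that pushing $\mu^z\otimes\lambda$ forward through the first-coordinate part of $T^z$ gives $\nu^z$ and that the $u$-coordinate is carried to $\lambda$ (using that an affine rescaling of a uniform sub-interval of length $p$ onto $[0,1]$ with weight $p$ reassembles to $\lambda$), which yields (i); property (ii) is then immediate since the $Y$-marginal of $(\id,T^z)_\ast(\mu^z\otimes\lambda)$ projected to $X\times Y$ reproduces $\pi^z$ by construction. The joint Borel measurability of $T$ in $z$ follows because all ingredients — $F_{\pi^{z,x}}$, $F_{\pi^{z,x}}^{-1}$, the atom sizes $\pi^{z,x}(\{\cdot\})$ — are Borel in their parameters.

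The main obstacle I expect is not the measure-theoretic bookkeeping but writing down a single clean closed formula for $T^z$ that is simultaneously (a) a genuine bijection on the nose (not just $\mu^z\otimes\lambda$-a.e.), (b) Borel jointly in $(z,x,u)$, and (c) has an explicitly exhibitable Borel inverse. Getting a literal bijection forces some care on the boundary behaviour of the generalized inverses and on how the interval $(F_{\pi^{z,x}}(y^-),F_{\pi^{z,x}}(y)]$ is split — one must make consistent left/right-continuity conventions so that the forward map and the map built from $e_\ast\pi^z$ are exact mutual inverses at every point, including the countably many endpoints. A convenient way to organize this is to first treat the purely atomic and purely continuous parts separately and then glue, or alternatively to pass through an intermediate ``graph'' relation and invoke the reduction to $[0,1]$ so that all the quantile manipulations happen on the interval where they are classical; either way the bijectivity verification is the part that needs the most attention, while (i) and (ii) reduce to the change-of-variables computation already rehearsed in Remark~\ref{rem:easyy_repr}.
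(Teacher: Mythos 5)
Your proposal shares the paper's starting point (the naive quantile map $(x,u)\mapsto(F_{\pi^{z,x}}^{-1}(u),F_{\pi^{z,y}}(x))$ fails on atoms, so auxiliary $[0,1]$-coordinates are used to dissolve them), but the specific rescaling formula you propose has a genuine gap: it is not injective. When $\pi^{z,x}(\{y\})>0$, your formula
\[
v=\frac{u-F_{\pi^{z,x}}(y^-)}{\pi^{z,x}(\{y\})}
\]
discards all information about $x$, since the rescaled quantity only records where $u$ sat inside the atom interval. Concretely, take $\pi^z$ uniform on $\{a,b\}\times\{a,b\}$: then $\pi^{z,a}=\pi^{z,b}$ and your formula yields $T^z(a,u)=T^z(b,u)=(a,2u)$ for $u\le 1/2$, a collision. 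The correct patch is to rescale onto an interval determined by $x$'s position in $F_{\pi^{z,y}}$, something of the form
\[
v=F_{\pi^{z,y}}(x^-)+\bigl(u-F_{\pi^{z,x}}(y^-)\bigr)\cdot\frac{\pi^{z,y}(\{x\})}{\pi^{z,x}(\{y\})},
\]
but this in turn requires the forward and backward disintegrations $\pi^{z,x}$ and $\pi^{z,y}$ to be \emph{pointwise} consistent (so that, e.g., $\pi^{z,x}(\{y\})>0$ iff $\pi^{z,y}(\{x\})>0$ with the expected mass identity), whereas disintegrations are only defined up to a.e.\ equivalence. Forcing an on-the-nose bijection out of this formula is therefore genuinely delicate — which is more than a ``bookkeeping'' issue.

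The paper sidesteps all of this by tensoring the conditional measures with $\lambda$ \emph{before} applying the quantile machinery: using Corollary~\ref{cor:borel_iso_parametr} one obtains Borel families of isomorphisms $G^{z,x}:Y\times[0,1]\to[0,1]^2$ with $G^{z,x}_\ast(\pi^{z,x}\otimes\lambda)=\lambda^2$ and $H^{z,y}:X\times[0,1]\to[0,1]^2$ with $H^{z,y}_\ast(\pi^{z,y}\otimes\lambda)=\lambda^2$. Because $\pi^{z,x}\otimes\lambda$ and $\pi^{z,y}\otimes\lambda$ are automatically continuous, $G^{z,x}$ and $H^{z,y}$ are genuine bijections for \emph{every} $(z,x)$, $(z,y)$ — no atom case-split, no pointwise consistency between the two disintegrations is needed for bijectivity, and the pushforward identities (i)--(ii) are checked by an a.e.\ argument where the disintegration properties are used only integrally. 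The cost is working on $Z\times X\times[0,1]^3$ rather than $Z\times X\times[0,1]$, which is absorbed at the end by a Borel isomorphism $h:[0,1]\to[0,1]^3$. If you want to pursue your more explicit rescaling route, you would have to both fix the offset as above and build in a pointwise consistency mechanism between the two disintegrations; the paper's extra factors of $[0,1]$ purchase exactly that for free.
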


\begin{proof}
	By Corollary \ref{cor:borel_iso_parametr} there exists a measurable mapping
	\[
	G: (Z \times X) \times Y \times [0,1] \to  [0,1]^2
	\] 	
	s.t.\ for all $(z,x) \in Z \times X$ the mapping $G^{z,x}:=G(z,x,\cdot) : Y \times [0,1] \to [0,1]^2$ is a Borel isomorphism satisfying  $G^{z,x}_\ast(\pi^{z,x} \otimes \lambda ) = \lambda^2$.
	
	Again by Corollary \ref{cor:borel_iso_parametr},  there exists a measurable mapping
	\[
	H : (Z \times Y) \times X \times [0,1] \to [0,1]^2
	\]
	s.t.\ for all $(z,y) \in Z \times Y$ the mapping $H^{z,y}:=H(z,y,\cdot) : X \times [0,1] \to [0,1]^2$ is a Borel isomorphism satisfying  $H^{z,y}_\ast(\pi^{z,y} \otimes \lambda ) = \lambda^2$.

	Consider the mapping $S : Z \times X \times [0,1]^3 \to Y \times [0,1]^3$ defined by $S(z,x_1,x_2,u_1,u_2)=(y_1,y_2,v_1,v_2)$, where
	\[
	(y_1,y_2)=(G^{z,x_1})^{-1}(u_1,u_2) \qquad (v_1,v_2)=H^{z,y_1}(x_1,x_2).
	\]
	Clearly, $S$ is Borel. For $z \in Z$ we denote $S^z := S(z,\cdot) : X \times [0,1]^3 \to Y \times [0,1]^3$.
	
	Our aim is to show that for all $z \in Z$ the mapping $S^z$ is a Borel isomorphism satisfying 
	\begin{enumerate}[(i)]
		\item ${S^z}_\ast (\mu^z \otimes \lambda^3) = \nu^z \otimes \lambda^3$,
		\item ${\pr_{XY}}_\ast (\id,S^z)_\ast (\mu^z \otimes \lambda^3)= \pi^z$.
	\end{enumerate}
	In order to prove the injectivity of $S^z$, let $(x_1,x_2,u_1,u_2) \neq (\bar x_1, \bar x_2, \bar u_1, \bar u_2)$ be given. We have to show that $(y_1,y_2,v_1,v_2) := S^z(x_1,x_2,u_1,u_2)$ and $(\bar y_1, \bar y_2, \bar v_1, \bar v_2) := S^z(\bar x_1, \bar x_2, \bar u_1, \bar u_2)$ are different. In the case $(y_1,y_2) \neq (\bar y_1, \bar y_2)$ there is nothing to prove, so we may assume $(y_1,y_2) = (\bar y_1, \bar y_2)$. We distinguish two cases:
	
	Case 1: $x_1 = \bar x_1$. This implies $(G^{z,x_1})^{-1} = (G^{z,\bar x_1})^{-1}$ and by the injectivity of this mapping we get $(u_1,u_2)=(\bar u_1, \bar u_2)$. Since $(x_1,x_2,u_1,u_2) \neq (\bar x_1, \bar x_2, \bar u_1, \bar u_2)$ this implies $x_2 \neq \bar x_2$ and by the injectivity of $H^{z,y_1}$ this implies $(v_1,v_2) \neq (\bar v_1, \bar v_2)$.
	
	Case 2: $x_1 \neq \bar x_1$. Then by the injectivity of $H^{z,y_1}$ we have $(v_1,v_2) \neq (\bar v_1, \bar v_2)$ as well.
	
	For proving the surjecitvity, let $(y_1,y_2,v_1,v_2)$ be given. By the surjectivity of $H^{z,y_1}$ there are $(x_1,x_2)$ such that $H^{z,y_1}(x_1,x_2)=(v_1,v_2)$. Now, by the surjectivity of $(G^{z,x_1})^{-1}$, there exists $(u_1,u_2)$ such that $(G^{z,x_1})^{-1}(u_1,u_2)=(y_1,y_2)$.
	
	We have shown that $S^z$ is a Borel measurable bijection and thus a Borel isomorphism (c.f.\ Section~\ref{APrelimSec}).
	
	\textit{Property (i).} Let $f :Y \times [0,1]^3$ be a Borel function. Then it holds
	\begin{align*}
		\int f(y_1,y_2,&v_1,v_2) dS^z_\ast(\mu^z \otimes \lambda^3)(y_1,y_2,v_1,v_2) = \\
		=& \int f( (G^{z,x_1})^{-1}(u_1,u_2), H(z,\pr_Y((G^{z,x_1})^{-1}(u_1,u_2)),x_1,x_2)) d(\mu^z \otimes \lambda^3)(x_1,x_2,u_1,u_2)\\
		=& \int f(y_1,y_2,H^{z,y_1}(x_1,x_2) ) \underbrace{d (G^{z,x_1})^{-1}_\ast \lambda^2(y_1,y_2)}_{= d(\pi^{z,x_1}\otimes \lambda)(y_1,y_2)}d(\mu^z\otimes\lambda)(x_1,x_2)\\
		=& \int f(y_1,y_2,H^{z,y_1}(x_1,x_2) ) \underbrace{d\pi^{z,x_1}(y_1)d\mu^z(x_1)}_{\substack{=d\pi^z(x_1,y_1) \\=d\pi^{z,y_1}(x_1)d\nu^z(y_1) }}d\lambda^2(x_2,y_2)\\
		=& \int f(y_1,y_2,H^{z,y_1}(x_1,x_2)) d\pi^{z,y_1}(x_1)d\lambda(x_2)d\nu^z(y_1)d\lambda(y_2)\\
		=& \int f(y_1,y_2,v_1,v_2)\underbrace{ dH^{z,y_1}_\ast(\pi^{z,y_1}\otimes\lambda)(v_1,v_2)}_{=d\lambda^2(v_1,v_2)}d\nu^z(y_1)d\lambda(y_2)\\
		=& \int f(y_1,y_2,v_1,v_2) d(\nu^z\otimes\lambda^3)(y_1,y_2,v_1,v_2),
	\end{align*} 
	which yields $S^z_\ast(\mu^z\otimes\lambda^3)=\nu^z\otimes\lambda^3$.
	
	\textit{Property (ii).} Note that $(\pr_X \circ (G^{z,x_1})^{-1})_\ast\lambda^2=\pi^{z,x_1}$ and that 
	\[
	\pr_{XY} \circ (\id_{X \times [0,1]^3},S^z) : X \times [0,1]^3\to X\times Y : (x_1,x_2,u_1,u_2) \mapsto (x_1,\pr_Y((G^{z,x_1})^{-1}(u_1,u_2))).
	\]
	Hence, for any Borel function $f : X \times Y \to \R$ we have
	\begin{align*}
		\int f(x_1,y_1) d{\pr_{XY}}_\ast& (\id_{X \times [0,1]^3},S^z)_\ast (\mu^z\otimes\lambda^3)(x_1,y_1) = \\
		=&\int f(x_1,\pr_Y((G^{z,x_1})^{-1}(u_1,u_2))) d\mu^z(x_1) d\lambda^3(x_2,u_1,u_2)\\
		=&\int f(x_1,y_1) \underbrace{d(G^{z,x_1})^{-1}_\ast\lambda^2(u_1,u_2)
		}_{=d\pi^{z,x_1}(y_1)d\lambda(y_2)}d\mu^z(x_1)\\
		=&\int f(x_1,y_1) d\pi^{z,x_1}(y_1)d\mu^z(x_1)\\
		=& \int f(x_1,y_1) d\pi^z(x_1,y_1),
	\end{align*}
	which shows that ${\pr_{XY}}_\ast(\id_{X  \times [0,1]^3},S^z)_\ast(\mu^z \otimes \lambda^3)=\pi^z$.
	
	By the Borel isomorphism theorem there exists a Borel isomorphism $h : [0,1] \to [0,1]^3$ satisfying $h_\ast\lambda=\lambda^3$. Define\footnote{To clarify the notation, for $f: A \to B$ and $g: A \to C$ we define $(f,g) : A \to B \times C : a \mapsto (f(a),g(a))$. For $f : A \to B$ and $g : C \to D$ we define $f \times g : A \times C \to B \times D : (a,c) \mapsto (f(a),g(c))$. }
	\[
	T := ( \id_Y \times h^{-1} )\circ S \circ (\id_Z \times \id_X \times h) : Z \times X \times [0,1] \to Y \times [0,1] 
	\]
	Clearly, $T$ is measurable as composition. For $z \in Z$ it holds $T^z = ( \id_Y \times h^{-1} )\circ S^z \circ ( \id_X \times h)$, so $T^z$ is a Borel isomorphism as composition of Borel isomorphisms. Moreover, it is easy to check that $T^z_\ast(\mu^z\otimes\lambda) = \nu^z \otimes \lambda$ and that $\widehat{\pi}:=(\id,T)_\ast(\mu \otimes \lambda)$ satisfies ${\pr_{XY}}_\ast\widehat{\pi^z}=\pi^z$.
\end{proof}

\subsection{Denseness of couplings supported by the graph of a bijection}\label{subsec:Denseness1}
The aim of this section is to prove that $\cplbm(\mu,\nu)$ is dense in $\cpl(\mu,\nu)$ if $\mu$ and $\nu$ are continuous measures. For that purpose, we need to approximate a given coupling $\pi \in \cpl(\mu,\nu)$ by a sequence $(\pi_n)_n$ in $\cplbm(\mu,\nu)$. 

To that end, we use the representation of $\pi$ as $\widehat{\pi}=(\id,T)_\ast(\mu \otimes \lambda) \in \cplbm(\mu\otimes \lambda, \nu \otimes \lambda)$ from the previous section. Then we choose sequences of partitions (with mesh converging to zero) of the spaces $X$ and $Y$  and bijections between $X$ and $X \times [0,1]$ (and respectively between $Y$ and $Y \times [0,1]$), which are compatible with these partitions (Proposition~\ref{prop:part_pushfwd}). In the proof of Theorem~\ref{thm:cplbm_dense} we show that the concatenation of these compatible bijections and $T$ is a suitable approximating sequence.

For a partition $\M$ of a metric space we define its mesh as $||\M|| := \sup_{M \in \M} \diam(M)$.
The following straightforward fact will be used below:
Let $X$ be a Polish space and $d$ be a compatible metric. 
Then there exists a sequence $(\M_n)_{n \in \N}$ of partitions of $X$ satisfying $ \lim_{n \to \infty}||\M_n|| = 0$ such that each partition $\M_n$ consists of at most countably many Borel subsets of $X$. 
%\end{prop}
%\begin{proof}
%This is an easy consequence of \cite[Theorem~13.9]{Ke95}.
%\end{proof}

The following sufficient criterion for weak convergence is convenient for proving  convergence of the approximating sequence that we construct in the proof of Theorem~\ref{thm:cplbm_dense}.
\begin{lemma}\label{lem:part_conv}
	Let $X$ be a Polish space with a compatible metric $d$, and let $(\M_n)_{n \in \N}$ be a sequence of partitions of $X$ satisfying $ \lim_{n \to \infty}||\M_n|| = 0$ such that each partition $\M_n$ consists of at most countably many Borel subsets of $X$. Assume that $\mu_n,\mu \in \prob(X)$  satisfy $\mu_n(M)=\mu(M)$ for all $M \in \M_n$. Then $\mu_n \to \mu$ w.r.t.\ weak convergence. 
	
	Let $p \in [1,\infty)$ and assume that  $\mu_n, \mu$ have finite $p$-th moments. Then we have $\mu_n \to \mu$ in $\W_p$ as well.
\end{lemma}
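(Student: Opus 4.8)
The plan is to handle the two claims separately, but via the same underlying mechanism: if $\mu_n$ and $\mu$ agree on every cell of a partition of small mesh, then both the difference of the integrals of a Lipschitz function and the cost of a suitably chosen coupling are controlled by $\|\M_n\|$.

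For the weak convergence I would first reduce to test functions $f:X\to\R$ that are bounded and Lipschitz; this suffices by the Portmanteau theorem. Discarding empty cells and picking a point $x_M\in M$ for each $M\in\M_n$, one forms the step function $f_n:=\sum_{M\in\M_n}f(x_M)\ind{M}$. Since $\M_n$ is countable and $f$ is bounded, interchanging sum and integral gives $\int f_n\,d\mu_n=\sum_{M}f(x_M)\mu_n(M)=\sum_{M}f(x_M)\mu(M)=\int f_n\,d\mu$ by the hypothesis, while $\|f-f_n\|_\infty\le\mathrm{Lip}(f)\,\|\M_n\|$ because on each cell $M$ the functions $f$ and $f_n$ differ by at most $\mathrm{Lip}(f)\diam(M)$. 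Combining these, $\bigl|\int f\,d\mu_n-\int f\,d\mu\bigr|\le 2\,\mathrm{Lip}(f)\,\|\M_n\|\to 0$, which is the assertion.

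For the $\W_p$-statement (now assuming finite $p$-th moments, so that $\W_p^d$ is a genuine metric on $\mu_n,\mu$) I would build the explicit ``cell-diagonal'' coupling
\[
\pi_n:=\sum_{M\in\M_n:\,\mu(M)>0}\frac{1}{\mu(M)}\,\bigl(\mu_n|_M\bigr)\otimes\bigl(\mu|_M\bigr),\qquad \mu_n|_M:=\mu_n(\cdot\cap M),
\]
and check that $\pi_n\in\cpl(\mu_n,\mu)$: the hypothesis forces $\mu_n(M)=\mu(M)=0$ on the discarded cells, so the remaining masses still sum to $1$, and computing each marginal (again exchanging the countable sum with the integral) returns $\mu_n$ resp.\ $\mu$. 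Since $\pi_n$ is concentrated on $\bigcup_{M}M\times M\subset\{(x,y):d(x,y)\le\|\M_n\|\}$, we get $\W_p^d(\mu_n,\mu)^p\le\int d(x,y)^p\,d\pi_n\le\|\M_n\|^p$, hence $\W_p^d(\mu_n,\mu)\le\|\M_n\|\to 0$.

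I do not expect a genuine obstacle; the proof is essentially bookkeeping. The only points needing a little care are: (i) restricting to non-empty cells for the step function and to cells of positive $\mu$-mass for the coupling, together with the observation that the relevant masses still sum to $1$; (ii) justifying the interchange of the countable sum over $\M_n$ with the integrals, which is harmless since all summands are non-negative (resp.\ uniformly bounded); and (iii) the routine verification of the two marginals of $\pi_n$. The reduction of weak convergence to bounded Lipschitz test functions is the only external input, and it is standard.
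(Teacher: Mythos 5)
Your proof is correct, and the quantitative core (the cell-diagonal coupling $\pi_n$ together with the estimate $\W_p(\mu_n,\mu)\le\|\M_n\|$) is exactly the paper's argument. The only genuine difference is in how the weak-convergence statement is obtained: you prove it directly by testing against bounded Lipschitz functions $f$, building the step function $f_n=\sum_M f(x_M)\ind{M}$ and using $\int f_n\,d\mu_n=\int f_n\,d\mu$ together with $\|f-f_n\|_\infty\le\mathrm{Lip}(f)\|\M_n\|$; the paper instead reuses its coupling estimate with the truncated metric $\widehat d:=\min(d,1)$, for which $\mu_n,\mu\in\prob_p^{\widehat d}(X)$ automatically, so $\W_p^{\widehat d}(\mu_n,\mu)\to 0$ yields weak convergence without any moment hypothesis. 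Both routes are legitimate and of comparable length; yours avoids introducing the auxiliary metric and makes the Lipschitz mechanism explicit, while the paper's packaging is slightly more economical in that a single coupling computation does all the work. Your side remarks about discarding $\mu$-null cells and justifying the interchange of sum and integral are fine (indeed $\mu_n(M)=\mu(M)=0$ on such cells), and the reduction of weak convergence to bounded Lipschitz test functions is standard, though it is a convergence-determining-class fact rather than literally the Portmanteau theorem.
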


\begin{proof}
	We first cover the case $\mu_n,\mu \in \prob_p(X)$. Since $\mu_n(M)=\mu(M)$ for all $M \in \M_n$ the probability measure
	\[
	\pi_n := \sum_{M \in \M_n} \frac{1}{\mu(M)} \mu_n|_M \otimes \mu|_M.
	\]
	is a coupling between $\mu_n$ and $\mu$. Using the coupling $\pi_n$ to estimate the Wasserstein distance of $\mu_n$ and $\mu$, we obtain
	\begin{align*}
		\W_p^p(\mu_n,\mu) &\le 	
		\int d^p\: d\pi_n  = \sum_{M \in \M_n} \int_M d^p\:d\pi_n  \le \sum_{M \in \M_n}\diam(M)^p \pi_n(M\times M) \\
		&\le ||\M_n||^p \sum_{M \in \M_n} \mu(M) = ||\M_n||^p \to 0,
	\end{align*}
	which implies $\W_p$- and hence weak convergence. If merely $\mu_n,\mu \in \prob(X)$ we can replace $d$ by $\widehat{d}:= \min(d,1)$. Then $\mu_n,\mu \in \prob_p^{\widehat{d}}(X)$ and the above consideration together imply that $\mu_n \to \mu$ weakly.
\end{proof}

The following proposition is (up to a few technicalities) a consequence of the isomorphism theorem for measures (cf.\ Section~\ref{APrelimSec}), which states that for any two continuous probability measures, there exists a bijection that pushes the first measure to the second.

\begin{prop}\label{prop:part_pushfwd}
	Let $X$ be a Polish space, $\M$ be an at most countable partition of $X$ consisting of Borel sets and $\mu \in \prob(X)$ be continuous. Then there exists a Borel isomorphism $\Phi_\mu^\M : X \to X \times [0,1]$ such that for all $M \in \M$ it holds $(\Phi_\mu^\M) _\ast(\mu|_M)=(\mu|_M) \otimes \lambda$.
\end{prop}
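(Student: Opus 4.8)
The plan is to reduce everything to the isomorphism theorem for measures — any two continuous probability measures on standard Borel spaces are related by a measure-preserving Borel isomorphism — applied on the individual cells of $\M$, and then to glue the resulting maps together. The observation that makes the statement manageable is that the condition $(\Phi_\mu^\M)_\ast(\mu|_M)=(\mu|_M)\otimes\lambda$ reads $0=0$ whenever $\mu(M)=0$ (both sides are the zero measure), so it is only the at most countably many cells of strictly positive $\mu$-mass that genuinely constrain $\Phi_\mu^\M$.

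Set $\M^+:=\{M\in\M:\mu(M)>0\}$; since $\mu$ is a probability measure and $\M$ is at most countable, $1=\mu(X)=\sum_{M\in\M}\mu(M)$, so $\M^+\neq\emptyset$ and we may fix some $M_0\in\M^+$. For each $M\in\M^+$ the normalised restriction $\mu(M)^{-1}\mu|_M$ is a continuous probability measure on the standard Borel space $M$, and its product with $\lambda$ is a continuous probability measure on $M\times[0,1]$ (continuity of the product is immediate from atomlessness of $\lambda$); the isomorphism theorem for measures hence yields a Borel isomorphism $\Phi_M:M\to M\times[0,1]$ with $(\Phi_M)_\ast(\mu|_M)=(\mu|_M)\otimes\lambda$.

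I expect the only real obstacle to be the leftover set $N:=X\setminus\bigcup_{M\in\M^+}M=\bigcup\{M\in\M:\mu(M)=0\}$, which is Borel with $\mu(N)=0$: if $N$ happens to be countable, finite, or empty there is no bijection between $N$ and $N\times[0,1]$, so $N$ cannot be treated in isolation. I would circumvent this by enlarging the distinguished cell to $\tilde M_0:=M_0\cup N$ (a disjoint Borel union) and applying the isomorphism theorem for measures on $\tilde M_0$: the measures $\mu(M_0)^{-1}\mu|_{M_0}$ on $\tilde M_0$ and $\mu(M_0)^{-1}(\mu|_{M_0})\otimes\lambda$ on $\tilde M_0\times[0,1]$ are continuous probability measures — crucially not of full support when $N\neq\emptyset$, which is exactly why one needs the version of the theorem valid for arbitrary continuous probability measures on standard Borel spaces — giving a Borel isomorphism $\Phi_0:\tilde M_0\to\tilde M_0\times[0,1]$ with $(\Phi_0)_\ast(\mu|_{M_0})=(\mu|_{M_0})\otimes\lambda$.

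Finally, I would define $\Phi_\mu^\M$ to equal $\Phi_0$ on $\tilde M_0$ and $\Phi_M$ on each $M\in\M^+\setminus\{M_0\}$. Since $\{\tilde M_0\}\cup(\M^+\setminus\{M_0\})$ is an at most countable Borel partition of $X$ whose images under these maps form an at most countable Borel partition of $X\times[0,1]$, the glued map $\Phi_\mu^\M$ is a bijection and both it and its inverse pull Borel sets back to at most countable unions of Borel sets, so it is a Borel isomorphism. The identity $(\Phi_\mu^\M)_\ast(\mu|_M)=(\mu|_M)\otimes\lambda$ then holds for every $M\in\M$: trivially when $\mu(M)=0$; as $(\Phi_0)_\ast(\mu|_{M_0})=(\mu|_{M_0})\otimes\lambda$ when $M=M_0$, because $\mu|_{M_0}$ is concentrated on $M_0\subseteq\tilde M_0$; and as $(\Phi_M)_\ast(\mu|_M)=(\mu|_M)\otimes\lambda$ otherwise, because $\mu|_M$ is concentrated on $M$. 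The main difficulty is exactly the possibly-small leftover set $N$, and absorbing it into $M_0$ together with the use of the isomorphism theorem for not-necessarily-fully-supported measures is what resolves it.
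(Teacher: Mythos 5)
Your proof is correct, and it takes a genuinely different (and simpler) route than the paper. The paper does not prove this proposition directly; it derives it as the $Z=\{\ast\}$ special case of a parametrized statement (Proposition~\ref{prop:part_pushfwd2}) in which the measure is a kernel $\mu^z$ depending measurably on a parameter $z$. Because in that setting a distinguished positive-mass cell cannot be chosen independently of $z$, the paper instead partitions $X$ by the \emph{uncountable} cells $\M'=\{M\in\M: M \text{ uncountable}\}$ (a partition that does not depend on $\mu^z$), uses fixed dummy continuous measures $\rho_M$ on the zero-mass uncountable cells, and repairs the countable leftover $X\setminus\bigcup\M'$ by a modification on $\mu$-null sets via Mauldin's Borel parametrization theorem (Theorem~\ref{thm:Borel_param}). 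Your device of absorbing $N=\bigcup\{M:\mu(M)=0\}$ into $\tilde M_0=M_0\cup N$ for some fixed $M_0$ with $\mu(M_0)>0$ avoids all of this machinery: both $\mu(M_0)^{-1}\mu|_{M_0}$ on $\tilde M_0$ and its product with $\lambda$ on $\tilde M_0\times[0,1]$ are continuous probability measures on uncountable standard Borel spaces, so the isomorphism theorem for measures applies, and trivially $(\Phi_0)_\ast(\mu|_{M_0})=(\mu|_{M_0})\otimes\lambda$ since $\mu|_{M_0}$ lives on $M_0\subset\tilde M_0$. The gluing and measurability bookkeeping are also correct (a piecewise-defined Borel bijection over a countable Borel partition of standard Borel spaces is a Borel isomorphism). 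The trade-off is exactly the one you suspected: your version is cleaner and needs only the non-parametrized isomorphism theorem, but it does not carry over to the kernel version the paper actually uses in the induction for Proposition~\ref{prop:part_pushfwd_adapted2}, which is why the paper takes the more elaborate path.
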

\begin{proof}
	See Proposition~\ref{prop:part_pushfwd2} in the appendix.
\end{proof}

Now we are ready to prove the main theorem of this section:
\begin{theorem}\label{thm:cplbm_dense}
	Let $X, Y$ be standard Borel spaces and  let $\mu \in \prob(X)$, $\nu \in \prob(Y)$ be continuous. Then $\cplbm(\mu,\nu)$ is weakly dense in $\cpl(\mu,\nu)$.
	
	Let $p \in [1,\infty)$. If $\mu$ and $\nu$ have finite $p$-th moments (w.r.t. compatible metrics $d_X, d_Y$), we have $\W_p$-denseness as well.
\end{theorem}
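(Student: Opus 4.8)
The plan is to combine the representation result Theorem~\ref{thm:cplrepr} with the partition-compatible isomorphisms of Proposition~\ref{prop:part_pushfwd} and the convergence criterion Lemma~\ref{lem:part_conv}. Fix $\pi\in\cpl(\mu,\nu)$. By Theorem~\ref{thm:cplrepr} there is a Borel isomorphism $T:X\times[0,1]\to Y\times[0,1]$ with $T_\ast(\mu\otimes\lambda)=\nu\otimes\lambda$ and ${\pr_{XY}}_\ast(\id,T)_\ast(\mu\otimes\lambda)=\pi$. Next choose sequences of at most countable Borel partitions $\M_n$ of $X$ and $\N_n$ of $Y$ with $\|\M_n\|\to0$ and $\|\N_n\|\to0$ (using min$(d_X,1)$, min$(d_Y,1)$ if we only want weak convergence). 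Apply Proposition~\ref{prop:part_pushfwd} to get Borel isomorphisms $\Phi_n:=\Phi_\mu^{\M_n}:X\to X\times[0,1]$ with $(\Phi_n)_\ast(\mu|_M)=(\mu|_M)\otimes\lambda$ for every $M\in\M_n$, and similarly $\Psi_n:=\Phi_\nu^{\N_n}:Y\to Y\times[0,1]$ with $(\Psi_n)_\ast(\nu|_K)=(\nu|_K)\otimes\lambda$ for every $K\in\N_n$.

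The approximating maps are then $T_n:=\Psi_n^{-1}\circ T\circ\Phi_n:X\to Y$. Each $T_n$ is a composition of Borel isomorphisms, hence a Borel isomorphism (a bijection), and $(T_n)_\ast\mu=\Psi_n^{-1}{}_\ast T_\ast(\Phi_n)_\ast\mu=\Psi_n^{-1}{}_\ast T_\ast(\mu\otimes\lambda)=\Psi_n^{-1}{}_\ast(\nu\otimes\lambda)$. Here the key computation is that $(\Phi_n)_\ast\mu=\mu\otimes\lambda$: this follows by summing over $M\in\M_n$ the identities $(\Phi_n)_\ast(\mu|_M)=(\mu|_M)\otimes\lambda$. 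Likewise $(\Psi_n)_\ast\nu=\nu\otimes\lambda$, so $\Psi_n^{-1}{}_\ast(\nu\otimes\lambda)=\nu$, giving $(T_n)_\ast\mu=\nu$ and hence $\pi_n:=(\id,T_n)_\ast\mu\in\cplbm(\mu,\nu)$.

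It remains to show $\pi_n\to\pi$ (weakly, and in $\W_p$ under the moment assumption). For this I would verify that $\pi_n(M\times K)=\widehat\pi(\,\Phi_n(M)\times\Psi_n(K)\,)$ for all $M\in\M_n$, $K\in\N_n$, where $\widehat\pi=(\id,T)_\ast(\mu\otimes\lambda)$, using that $\Phi_n$ maps $\mu|_M$ to $(\mu|_M)\otimes\lambda$ and that $T$ intertwines $\widehat\pi$ with its graph structure; one checks $\pi_n(M\times K)=\mu\{x\in M: T_n(x)\in K\}=(\mu\otimes\lambda)\{(x,u): \Phi_n(x)\in X\times[0,1],\ T(\Phi_n(x))\in\Psi_n(K),\ x\in M\}$ and simplifies using the partition-compatibility to obtain $\widehat\pi(\Phi_n(M)\times\Psi_n(K))$. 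Then $\pi_n(M\times K)=\pi(M\times K)$ for all $M\in\M_n$, $K\in\N_n$, because the extra $[0,1]$-coordinates are integrated out and $\Phi_n,\Psi_n$ preserve the relevant marginal mass on partition blocks; applying the product partition version of Lemma~\ref{lem:part_conv} on $X\times Y$ (whose blocks $M\times K$ have diameter controlled by $\|\M_n\|$ and $\|\N_n\|$) yields $\pi_n\to\pi$.

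The main obstacle I anticipate is precisely the bookkeeping in the last paragraph: carefully tracking how the auxiliary $[0,1]$-coordinates introduced by $T$, $\Phi_n$, $\Psi_n$ interact, and confirming that the partition-compatibility of $\Phi_n$ and $\Psi_n$ is exactly the right condition to make $\pi_n$ and $\pi$ agree on the product partition blocks $M\times K$. A clean way to organize this is to first establish the auxiliary identity $\widehat\pi\bigl((\Phi_n\times\Psi_n)^{-1}(\,\cdot\,)\bigr)$-type relation, or equivalently to compute $\int f\,d\pi_n$ for $f$ constant on blocks $M\times K$ directly via the change of variables through $T$; once that bridge is built, the convergence is immediate from Lemma~\ref{lem:part_conv}. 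The $\W_p$ case needs no extra work beyond using the genuine metrics $d_X,d_Y$ in the choice of partitions, since Lemma~\ref{lem:part_conv} already delivers $\W_p$-convergence.
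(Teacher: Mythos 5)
Your proposal is correct and follows essentially the same route as the paper's proof: the same representation via Theorem~\ref{thm:cplrepr}, the same partition-compatible isomorphisms $\Phi_n,\Psi_n$ from Proposition~\ref{prop:part_pushfwd}, the same approximating maps $T_n=\Psi_n^{-1}\circ T\circ\Phi_n$, and the same appeal to Lemma~\ref{lem:part_conv} once $\pi_n(M\times K)=\pi(M\times K)$ is established on product partition blocks. The paper just writes out the chain of set-measure equalities that you defer as the ``bookkeeping'' step (ending at $\widehat\pi(M\times[0,1]\times K\times[0,1])=\pi(M\times K)$ via property~(ii)), which is the sole remaining work in your sketch.
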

\begin{proof}
	Note that if $\mu$ and $\nu$ both have finite $p$-th moments, then every $\pi \in \cpl(\mu,\nu)$ has finite $p$-th moments w.r.t.\ the product metrc. As $\cpl(\mu,\nu)$ is weakly and $\W_p$-closed, the weak- and $\W_p$-closures of $\cplbm(\mu,\nu)$  are contained in $\cpl(\mu,\nu)$. In order to show that the closure of $\cplbm(\mu,\nu)$ is $\cpl(\mu,\nu)$, we have to show that any $\pi \in \cpl(\mu,\nu)$ can be approximated by a sequence $(\pi_n)_n$ in $\cplbm(\mu,\nu)$.
	
	According to Theorem~\ref{thm:cplrepr} there exists a coupling $\widehat{\pi} = (\id,T)_\ast(\mu\otimes\lambda) \in \cplbm(\mu\otimes \lambda, \nu \otimes \lambda)$ such that
	\begin{enumerate}[(i)]
		\item $T_\ast(\mu \otimes\lambda)=\nu \otimes \lambda$,
		\item ${\pr_{XY}}_\ast\widehat{\pi}=\pi$.
	\end{enumerate}
	
	Let $(\A_n)_{n \in \N}$ and $(\B_n)_{n\in\N}$ be sequences of partitions of $X$ and $Y$ consisting of countably many Borel sets and satisfying $ \lim_{n \to \infty}||\A_n|| = 0$ and $ \lim_{n \to \infty}||\B_n|| = 0$. According to Proposition~\ref{prop:part_pushfwd}, for any $n \in \N$ there exist bijections $\Phi_n : X \to X \times [0,1]$ and $\Psi_n : Y \to Y \times [0,1]$ such that 
	\begin{enumerate}[(i)]
		\setcounter{enumi}{2}
		\item ${\Phi_n}_\ast(\mu|_A)=(\mu|_A)\otimes \lambda$ for all $A \in \A_n$,
		\item ${\Psi_n}_\ast(\nu|_B)=(\nu|_B)\otimes \lambda$ for all $B \in \B_n$.
	\end{enumerate}
	
	For $n \in \N$ define the mapping
	\[
	T_n := \Psi_n^{-1} \circ T\circ\Phi_n : X \to Y.
	\]
	It is easy to see that $T_n$ is bijective and satisfies ${T_n}_\ast\mu=\nu$. We need to check that $\pi_n:=(\id,T_n)_\ast\mu \weaklyto \pi$. Note that $\A_n \otimes \B_n := \{ A \times B \:|\:\ A \in \A_n, B \in \B_n \}$ are partitions of $X \times Y$ consisting of countably many Borel sets satisfying $\lim_{n \to \infty} ||\A_n \otimes \B_n ||=0$. Hence, by Lemma \ref{lem:part_conv} it suffices to show for all $n \in \N$ and for all $ A \in \A_n\: \forall B \in \B_n$
	\[
	\pi_n(A\times B) = \pi(A \times B).
	\]
	This is a consequence of the properties (i)  to (iv) of the mappings $\Phi_n,\Psi_n$ and $T$:
	\begin{align*}
		\pi_n(A\times B) &= \mu(A \cap T_n^{-1}(B)) = \mu|_A((\Phi_n^{-1}\circ T^{-1} \circ \Psi_n)(B))  \stackrel{(iii)}{=} (\mu|_A \otimes \lambda)(T^{-1}(\Psi_n(B)))\\
		&= (\mu \otimes \lambda)((A \times [0,1]) \cap T^{-1}(\Psi_n(B)) ) \stackrel{(i)}{=} (\nu \otimes \lambda)(T(A \times [0,1]) \cap \Psi_n(B) )\\
		&= (\nu \otimes \lambda)(\Psi_n( \Psi_n^{-1}(T(A\times [0,1])) \cap B  )) = {\Psi_n^{-1}}_\ast(\nu\otimes\lambda) ( \Psi_n^{-1}(T(A\times [0,1]))\cap B)\\
		&\hspace*{-0.25em}\stackrel{(iv)}{=}\nu|_B(\Psi_n^{-1}(T(A\times [0,1]))) \stackrel{(iv)}{=}(\nu|_B \otimes \lambda)(T(A\times [0,1]))\\\
		&=(\nu\otimes\lambda)(T(A\times [0,1]) \cap (B \times [0,1]) ) = (\id,T^{-1})_\ast(\nu \otimes \lambda)(B \times [0,1] \times A \times [0,1])\\
		&= \widehat{\pi}(A \times [0,1] \times B \times [0,1]) \stackrel{(ii)}{=} \pi(A \times B).
	\end{align*}
\end{proof}

\begin{remark}
	It is well known that $\cplm(\mu,\nu)$ is dense in $\cpl(\mu,\nu)$ if $\mu$ is continuous. We can prove this result with little effort using the tools that we have developed so far. The proof can be carried out exactly as the proof of Theorem~\ref{thm:cplbm_dense} with one important exception: If $\nu$ has atoms, the existence of the bijections $\Psi_n : Y \to Y \times [0,1]$ that push $\nu$ to $\nu \otimes \lambda$ (and are compatible with the given partition) fails. However, if we just replace $\Psi_n^{-1}$ by $\pr_Y : Y \times [0,1] \to Y$ in the definition of the mappings $T_n$, the only property of $T_n$ that we loose is its injectivity. Hence, we have constructed a sequence of mappings $(T_n)_{n \in \N}$ that push $\mu$ to $\nu$ s.t.\ $(\id,T_n)_\ast\mu$ converges to the given coupling $\pi$.
\end{remark}

%An immediate consequence of Theorem~\ref{thm:cplbm_dense} is that the optimal transport problem yields the same value if we restrict ourselves to Monge couplings or even to Monge couplings, which are supported by the graph of a bijection:
%\begin{cor}\label{cor:equal_values_KP_MP_BMP}
%Let $\mu \in \prob(X), \nu \in \prob(Y)$ be continuous and $c : X \times Y \to \R$ be continuous and bounded. Then we have
%\[
%\inf\left\{\int c(x,y) \, d\pi(x,y) \;\middle|\;  \pi\in\cpl(\mu,\nu) \right\} = \inf\left\{\int c(x, T(x)) \, d\mu(x) \;\middle|\;  T_\ast\mu = \nu\right\}.
%\]
%\end{cor}
%In particular, one can restrict to couplings supported on the graph of bijections, when calculating the Wasserstein distance of two probability measures:
%\begin{cor}
%Let $X$ be a Polish space with compatible metric $d$ and $p \in [1,\infty)$. If $\mu,\nu \in \prob_p(X)$ are continuous we have \[
%\W_p(\mu,\nu) = \inf \left\{ \int d(x,T(x))^p d\mu(x)  \:\middle|\: T \text{ bijective, } T_\ast\mu=\nu    \right\}^{1/p}.
%\]
%\end{cor}

\section{The time dependent case}\label{sec:time_dependent}
The aim of this chapter is to extend the results of the previous chapter to the time dependent case. Then main result is that a coupling between the laws of two stochastic processes can be approximated by biadapted Monge mappings if and only if it is bicausal (see Theorem~\ref{thm:main} for the exact statement including the regularty assumptions on the marginals).

%to finitely many time steps, where we aim for a construction that respects the arrow of time: Given measures $\mu$ and $\nu$ on  path spaces of processes with finitely many times steps (say $\R^N$), we want to approximate a given coupling with Monge couplings supported on the graph of bijections $F: \R^N \to \R^N$, which satisfy the following property: For $t \le N$, the $t$-th component of $F(x_1,\dots,x_N)$ does only depend on $(x_1,\dots,x_t)$ and the $t$-th component of $F^{-1}(y_1,\dots,y_N)$ does only depend on $(y_1,\dots,y_t)$. Such mappings are called biadapted. %, see Definition \ref{def:adapted}.

%The main contribution  of this paper in the Polish setting is that (under certain regularity assumptions on the marginals) such an approximation is possible if and only if the coupling is a so-called bicausal coupling, see Theorem~\ref{thm:main}. Theorem~\ref{MainTheorem} spelled out in the Introduction is then a direct consequence.

Before we start, we fix some notation for this chapter: $N \in \N$ will always be the number of time steps that we consider. $X_1 \dots, X_N$, $Y_1,\dots,Y_N$ are always Polish spaces.

$\prod_{i=1}^N X_i$ will be the path space of the first process, whose law will be denoted by $\mu$ and 
$\prod_{i=1}^N Y_i$ will be the path space of the second process, whose law will be denoted by $\nu$.

For $1 \le s <t \le N$ we introduce the abbreviation  $X_{s:t} := \prod_{i=s}^{t} X_i$. We use the same abbreviation for elements of $X_{s:t}$, i.e.\ $(x_s,x_{s+1},\dots,x_t)=:x_{s:t}$, and for subsets, i.e.\ $A_s \times A_{s+1} \times \dots\times A_t =: A_{s:t}$ for $A_i \subset X_i$. We use $X$ as a shorthand for $X_{1:N}$. For the $Y$-component we use analogous notations.

For $t \le N$ define $\F_t^X$ as the $\sigma$-algebra on $X \times Y$ generated by the projections $ X\times Y \to X_{1:t} : (x,y) \mapsto x_{1:t}$ (and $\F_t^Y$ respectively).

We will often decompose $\mu \in \prob(X)$ as $\mu(dx)=\mu_1(dx_1)\mu^{x_1}(dx_{2:N})$, where $\mu_1 \in \prob(X_1)$ and $x_1\mapsto \mu^{x_1}$ is a kernel from $X_1$ to $X_{2:N}$. Iterating this yields
\[
\mu(dx)=\mu_1(dx_1)\mu^{x_1}(dx_2)\cdots \mu^{x_{1:N-1}}(dx_N),
\]
i.e.\ $\mu_1 \in \prob(X_1)$ and for all $t<N$ there are kernels $x_{1:t} \mapsto\mu^{x_{1:t}}$ from $X_{1:t}$ to $X_{t+1}$.

\subsection{Biadapted mappings and bicausal couplings}\label{sec:Causal}
%(Bi)adapted mappings are mappings that ``respect the time structure''. The aim of this section is to briefly explain the concept of (bi)causal couplings and to point out that the (bi)adapted mappings are exactly the (bi)causal Monge couplings. 
%
%\begin{definition}\label{def:adapted}
%	A map $T: X \to Y$ is called adapted if for all $t \le N$ there exist Borel functions $T_t : X_{1:t} \to Y_t$ such that
%	\[
%	T(x_{1:N}) = (T_1(x),T_2(x_{1:2}),\dots,T_N(x_{1:N})).
%	\]
%	
%	A bijection $T: X \to Y $ is called biadapted if $T$ and $T^{-1}$ are both adapted.
%\end{definition}
%The term adapted is due to the fact that $T$ is adapted if and only if $T_t$ is $\F_t^X$-measurable for all $t \le N$.

The following lemma is an inductive characterization of biadapted mappings that will be helpful below.

\begin{lemma}\label{lem:indCharOfBiadapted}		
	Let $T_1 : X_1 \to Y_1$ be a bijection and let $S: X_1 \times X_{2:N} \to Y_{2:N}$ be a Borel mapping such that for all $x_1 \in X_1$ the mapping $S^{x_1} :  X_{2:N} \to Y_{2:N} : x_{2:N} \mapsto S(x_1,x_{2:N})$ is biadapted.
	
	Then the mapping
	\[
	T : X_{1:N} \to Y_{1:N} : x_{1:N} \mapsto (T_1(x_1), S^{x_1}(x_{2:N}))
	\]
	is biadapted.
\end{lemma}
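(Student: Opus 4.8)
The statement asks to show that if $T_1 \colon X_1 \to Y_1$ is a bijection and $S \colon X_1 \times X_{2:N} \to Y_{2:N}$ is Borel with every slice $S^{x_1}$ biadapted, then $T(x_{1:N}) = (T_1(x_1), S^{x_1}(x_{2:N}))$ is biadapted. Recall that being biadapted means: $T$ is a bijection, $T$ is adapted (each component $T_t$ depends only on $x_{1:t}$, for $t<N$), and $T^{-1}$ is adapted. So the proof naturally splits into three parts: bijectivity, adaptedness of $T$, and adaptedness of $T^{-1}$. Borel measurability of $T$ is immediate since $T_1$ is a bijection between standard Borel spaces (hence a Borel isomorphism, so in particular Borel) and $S$ is Borel by hypothesis.

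\textbf{Bijectivity.} Since $T_1$ is a bijection and, for each fixed $x_1$, the map $S^{x_1}$ is a bijection (being biadapted), the map $T$ is a bijection with explicit inverse
\[
T^{-1} \colon Y_{1:N} \to X_{1:N} \colon y_{1:N} \mapsto \bigl(T_1^{-1}(y_1),\ (S^{T_1^{-1}(y_1)})^{-1}(y_{2:N})\bigr).
\]
One checks $T^{-1}\circ T = \id$ and $T \circ T^{-1} = \id$ by direct substitution, using that $T_1^{-1}$ inverts $T_1$ and $(S^{x_1})^{-1}$ inverts $S^{x_1}$ for each $x_1$.

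\textbf{Adaptedness of $T$.} The first component of $T$ is $T_1(x_1)$, which depends only on $x_1$. For $2 \le t < N$, the $t$-th component of $T$ is the $(t-1)$-st component of $S^{x_1}(x_{2:N})$; since $S^{x_1}$ is adapted, this depends only on $x_{2:t}$ — and of course on the parameter $x_1$ — hence only on $x_{1:t}$. So every component $T_t$ with $t<N$ depends only on $x_{1:t}$, i.e.\ $T$ is adapted. The only slightly delicate point is the indexing shift: $S$ maps into $Y_{2:N}$, so its components are naturally indexed $2,\dots,N$, and one should be careful that adaptedness of $S^{x_1}$ (its $t$-th component depends on $x_{2:t}$) translates correctly. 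This is bookkeeping rather than a genuine obstacle.

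\textbf{Adaptedness of $T^{-1}$.} This is the part requiring the most care, and I expect it to be the main obstacle. Using the formula for $T^{-1}$ above, the first component is $T_1^{-1}(y_1)$, which depends only on $y_1$. For $2 \le t < N$, the $t$-th component of $T^{-1}$ is the $(t-1)$-st component of $(S^{T_1^{-1}(y_1)})^{-1}(y_{2:N})$. Now I use two facts: first, $S^{x_1}$ biadapted means $(S^{x_1})^{-1}$ is adapted, so its $(t-1)$-st component depends only on $y_{2:t}$ for each fixed value of the parameter $x_1$; second, that parameter is itself $x_1 = T_1^{-1}(y_1)$, which depends only on $y_1$. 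Composing, the $t$-th component of $T^{-1}$ depends only on $(y_1, y_{2:t}) = y_{1:t}$, as required. The subtlety here is purely that the ``parameter'' $x_1$ entering $(S^{x_1})^{-1}$ has been substituted by a function of $y_1$, and one must confirm this substitution preserves the dependence structure — which it does, since $y_1$ is available at every time $t \ge 1$. Once these three pieces are assembled, $T$ is a bijection with both $T$ and $T^{-1}$ adapted, hence biadapted, and the proof is complete.
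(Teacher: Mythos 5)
Your proof is correct and follows essentially the same route as the paper: both derive the explicit inverse $T^{-1}(y_{1:N}) = \bigl(T_1^{-1}(y_1), (S^{T_1^{-1}(y_1)})^{-1}(y_{2:N})\bigr)$ (the paper obtains it from the identity $y = T(F(y))$ rather than writing it down directly) and then read off adaptedness of $T^{-1}$ component by component from the adaptedness of $(S^{x_1})^{-1}$ together with the fact that the parameter $x_1 = T_1^{-1}(y_1)$ depends only on $y_1$. One peripheral slip: a bijection between standard Borel spaces is \emph{not} automatically Borel (only a \emph{Borel} bijection is automatically a Borel isomorphism), so Borelness of $T_1$ should be taken as an implicit hypothesis rather than deduced; this does not affect the substance of the argument, and the paper glosses over the same point.
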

\begin{proof}
	See Appendix \ref{subsec:proofs}.
\end{proof}
%Next we discuss couplings that  ``respect the time structure'':
%\begin{definition}
%Let $\mu \in \prob(X)$ and $\nu \in \prob(Y)$. A coupling $\pi \in \cpl(\mu,\nu)$ is called causal if for any $t < N$ and $B \in \F_t^Y$ the mapping $X \ni x \mapsto \pi^x(B)$ is $\F_t^X$-measurable. We denote the set of causal couplings between $\mu$ and $\nu$ as $\cplc(\mu,\nu)$. 
%
%A causal coupling $\pi \in \cplc(\mu,\nu)$ is called bicausal if $e_\ast\pi \in \cplc(\nu,\mu)$.
%\end{definition}  
%In our setup, this condition is equivalent to saying that for any $t<N$ the mapping $X \to \prob(Y_{1:t}) : x \mapsto {\pr_{Y_{1:t}}}_\ast\pi$ is $\F_t^X$-measurable.

\begin{prop}[{\cite[Proposition~5.1]{BaBeLiZa17}}]\label{prop:bicausality_crit}
	Let $\mu \in \prob(X), \nu \in \prob(Y)$ and $\pi \in \prob(X\times Y)$. Then the following are equivalent:
	\begin{enumerate}[(i)]
		\item $\pi \in \cplbc(\mu,\nu)$;
		\item When decomposing 
		\[
		\pi(dx,dy)=\pi_1(dx_1,dy_1)\pi^{x_1,y_1}(dx_2,dy_2)\cdots \pi^{x_{1:N-1},y_{1:N-1}}(dx_N,dy_N)
		\]
		we have
		\begin{enumerate}[(a)]
			\item $\pi_1 \in \cpl({\pr_1}_\ast\mu,{\pr_1}_\ast\nu)$
			\item $
			\pi^{x_{1:t},y_{1:t}} \in \cpl(\mu^{x_{1:t}},\nu^{y_{1:t}})
			$ for all $t<N$ and $\pi$-almost all $(x_{1:t},y_{1:t})$.
			
		\end{enumerate}
	\end{enumerate}
	
\end{prop}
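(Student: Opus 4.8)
The plan is to base everything on a reformulation of causality: for a fixed $t<N$ I would first check that the requirement from the definition of causality — namely that $x\mapsto\pi^x(B\times Y_{t+1:N})$ be a (Borel) function of $x_{1:t}$ for every Borel $B\subseteq Y_{1:t}$ — is equivalent to the statement that the $Y_{1:t}$-marginal $q^{x_{1:t}}:={\pr_{Y_{1:t}}}_\ast\pi^x$ of the disintegration kernel depends only on $x_{1:t}$. (The nontrivial direction uses that $Y_{1:t}$ is standard Borel, so testing the measurability on a countable algebra generating $\mathcal B(Y_{1:t})$ suffices to produce a single $\sigma(x_{1:t})$-measurable version of the kernel $x\mapsto q^{x_{1:t}}$.) In particular causality yields that the $\pi$-law of $(X_{1:N},Y_{1:t})$ equals $\mu(dx)\,q^{x_{1:t}}(dy_{1:t})$, and applying this also to $e_\ast\pi$ gives the analogue with the roles of the two processes exchanged.

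For \emph{(ii)$\Rightarrow$(i)} I would argue as follows. Conditions (a),(b) first force $\pi\in\cpl(\mu,\nu)$: integrating the iterated decomposition of $\pi$ successively over $y_N,y_{N-1},\dots,y_1$ and using that, by (b), the $X_{t+1}$-marginal of $\pi^{x_{1:t},y_{1:t}}$ equals $\mu^{x_{1:t}}$ (which is free of $y_{1:t}$), the $X$-marginal telescopes to $\mu_1(dx_1)\mu^{x_1}(dx_2)\cdots=\mu$, and symmetrically the $Y$-marginal is $\nu$. Next, disintegrating each kernel $\pi^{x_{1:t},y_{1:t}}$ against its $X_{t+1}$-marginal $\mu^{x_{1:t}}$ and regrouping the $X$-factors (legitimate since each factor depends only on coordinates indexed $\le$ its own), one obtains the version
\[
\pi^x(dy)=(\pi_1)^{x_1}(dy_1)\,(\pi^{x_1,y_1})^{x_2}(dy_2)\cdots(\pi^{x_{1:N-1},y_{1:N-1}})^{x_N}(dy_N)
\]
of the disintegration of $\pi$ against $\mu$. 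For $B=B_{1:t}\times Y_{t+1:N}\in\F_t^Y$ the last $N-t$ factors are probability kernels in $y_{t+1},\dots,y_N$ and integrate to $1$, so $\pi^x(B)$ is an integral over $B_{1:t}$ of the first $t$ factors, hence a function of $x_{1:t}$; thus $\pi$ is causal. Since (a),(b) are symmetric under $\mu\leftrightarrow\nu$, $x\leftrightarrow y$, and the iterated decomposition of $e_\ast\pi$ has factors $e_\ast\pi_1,\,e_\ast\pi^{x_{1:t},y_{1:t}}$, the same reasoning shows $e_\ast\pi$ is causal, so $\pi\in\cplbc(\mu,\nu)$.

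For \emph{(i)$\Rightarrow$(ii)}: property (a) is immediate from $\pi\in\cpl(\mu,\nu)$, since then ${\pr_{X_1}}_\ast\pi_1=\mu_1$ and ${\pr_{Y_1}}_\ast\pi_1=\nu_1$. For (b), fix $t<N$; by the reformulation above, causality of $\pi$ at level $t$ says $q^{x_{1:t}}={\pr_{Y_{1:t}}}_\ast\pi^x$ depends only on $x_{1:t}$, so the $\pi$-law of $(X_{1:N},Y_{1:t})$ is $\mu(dx)\,q^{x_{1:t}}(dy_{1:t})$. Writing $\mu(dx)=\mu_1(dx_1)\cdots\mu^{x_{1:t-1}}(dx_t)\,\mu^{x_{1:t}}(dx_{t+1})\cdots$ and commuting $q^{x_{1:t}}(dy_{1:t})$ past the factors indexed $>t$, one reads off that the conditional $\pi$-law of $X_{t+1}$ given $(X_{1:t},Y_{1:t})=(x_{1:t},y_{1:t})$ is $\mu^{x_{1:t}}$, i.e.\ ${\pr_{X_{t+1}}}_\ast\pi^{x_{1:t},y_{1:t}}=\mu^{x_{1:t}}$ for $\pi$-a.e.\ $(x_{1:t},y_{1:t})$. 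Applying the same argument to $e_\ast\pi$ yields ${\pr_{Y_{t+1}}}_\ast\pi^{x_{1:t},y_{1:t}}=\nu^{y_{1:t}}$ for $\pi$-a.e.\ $(x_{1:t},y_{1:t})$, which is (b).

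The conceptual content is slight; the main obstacle is the measure-theoretic bookkeeping — obtaining a single $\sigma(x_{1:t})$-measurable version of a marginal kernel from the measurability of each of its evaluations, justifying the Fubini-type reorderings of products of kernels where each factor depends only on earlier coordinates, invoking uniqueness of disintegrations on standard Borel spaces, and keeping track of which statements hold only $\pi$-almost everywhere. An alternative would be induction on $N$, decomposing at the first time step and reducing to an $N=2$-type one-step characterization of bicausality, but that requires setting up the induction and separately proving the one-step case, so the conditional-independence route seems cleaner.
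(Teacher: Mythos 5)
The paper gives no proof of this proposition; it is cited verbatim from \cite[Proposition~5.1]{BaBeLiZa17}, so there is no in-paper argument to compare against. Your sketch is correct and self-contained. The two load-bearing ideas are exactly right: (1) causality at level $t$ is equivalent to ${\pr_{Y_{1:t}}}_\ast\pi^x$ factoring through $x_{1:t}$, which one gets by testing measurability on a countable generating algebra of $\mathcal B(Y_{1:t})$ and invoking Doob--Dynkin for the Polish-valued map $x\mapsto {\pr_{Y_{1:t}}}_\ast\pi^x\in\prob(Y_{1:t})$; and (2) products of one-step kernels may be reordered by Fubini whenever each factor depends only on coordinates with index no larger than its own, which justifies both the telescoping of marginals (so that (a),(b) force $\pi\in\cpl(\mu,\nu)$) and the regrouped expression for $\pi^x(dy)$ from which causality of $\pi$ and of $e_\ast\pi$ are read off. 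The only point to carry out explicitly in a full write-up is that (b) is a $\pi$-a.e.\ statement: the reorderings and the identification of conditional laws in (i)$\Rightarrow$(ii) all hold modulo $\pi$-null sets, and one must accumulate these countably many exceptional sets; as you note yourself, this is bookkeeping rather than a gap in the argument.
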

An easy corollary of this criterion is the following inductive characterization of bicausal couplings:
\begin{cor}\label{cor:inductiveCharBicausal}
	Let $\mu \in \prob(X), \nu \in \prob(Y)$ and $\pi \in \cpl(\mu,\nu)$. Then $\pi \in \cplbc(\mu,\nu)$ if and only if, when decomposing $\pi$ as $d\pi(x,y)=d\pi_1(x_1,y_1)d\pi^{x_1,y_1}(x_{2:N},y_{2:N})$, one has $\pi^{x_1,y_1} \in \cplbc(\mu^{x_1},\nu^{y_1} )$ for $\pi_1$-almost all $(x_1,y_1)$.
\end{cor}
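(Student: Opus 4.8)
The plan is to unfold both sides of the asserted equivalence by means of Proposition~\ref{prop:bicausality_crit}, applied once to $\pi$ on the $N$-step space $X\times Y$ and once, for $\pi_1$-almost every $(x_1,y_1)$, to the conditional coupling $\pi^{x_1,y_1}$ on the $(N-1)$-step space $X_{2:N}\times Y_{2:N}$, and then to check that the two resulting families of conditions coincide. Throughout I fix once and for all regular versions of the conditional kernels appearing in the iterated disintegrations of $\mu$, $\nu$ and $\pi$; by the tower property (essential uniqueness of disintegrations on standard Borel spaces) they may be chosen mutually compatible, so that for $2\le t\le N-1$ the kernel $(x_{2:t},y_{2:t})\mapsto\pi^{x_{1:t},y_{1:t}}$ is, for $\pi$-a.a.\ $(x_{1:t},y_{1:t})$, a disintegration of the one-step kernel $\pi^{x_1,y_1}(dx_{2:N},dy_{2:N})$ with respect to $(x_{2:t},y_{2:t})$, and likewise $\mu^{x_{1:t}}$ arises by disintegrating $\mu^{x_1}$ and $\nu^{y_{1:t}}$ by disintegrating $\nu^{y_1}$.

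Since $\pi\in\cpl(\mu,\nu)$, the first-marginal condition (ii)(a) of Proposition~\ref{prop:bicausality_crit} is automatic (the $X_1$- and $Y_1$-marginals of $\pi_1$ are ${\pr_1}_\ast\mu$ and ${\pr_1}_\ast\nu$), so by that proposition $\pi\in\cplbc(\mu,\nu)$ is equivalent to the family of conditions
\[
(\star)\qquad \text{for each } t\in\{1,\dots,N-1\},\quad \pi^{x_{1:t},y_{1:t}}\in\cpl\big(\mu^{x_{1:t}},\nu^{y_{1:t}}\big)\ \text{ for $\pi$-a.a.\ }(x_{1:t},y_{1:t}).
\]
Next, for $\pi_1$-a.a.\ $(x_1,y_1)$ I apply Proposition~\ref{prop:bicausality_crit} to the triple $(\mu^{x_1},\nu^{y_1},\pi^{x_1,y_1})$, noting that the proposition does not presuppose $\pi^{x_1,y_1}\in\cpl(\mu^{x_1},\nu^{y_1})$. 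Its condition (ii)(a) here says that the $(X_2\times Y_2)$-marginal of $\pi^{x_1,y_1}$ couples the $X_2$-marginal of $\mu^{x_1}$ with the $Y_2$-marginal of $\nu^{y_1}$, which is exactly the $t=1$ instance of $(\star)$; and, by the compatibility of the kernels fixed above, its condition (ii)(b) says that $\pi^{x_{1:t},y_{1:t}}\in\cpl(\mu^{x_{1:t}},\nu^{y_{1:t}})$ for $\pi^{x_1,y_1}$-a.a.\ $(x_{2:t},y_{2:t})$, for $t\in\{2,\dots,N-1\}$. Hence ``$\pi^{x_1,y_1}\in\cplbc(\mu^{x_1},\nu^{y_1})$ for $\pi_1$-a.a.\ $(x_1,y_1)$'' is equivalent to the $t=1$ instance of $(\star)$ together with the statement that, for each $t\in\{2,\dots,N-1\}$ and $\pi_1$-a.a.\ $(x_1,y_1)$, one has $\pi^{x_{1:t},y_{1:t}}\in\cpl(\mu^{x_{1:t}},\nu^{y_{1:t}})$ for $\pi^{x_1,y_1}$-a.a.\ $(x_{2:t},y_{2:t})$; by Fubini along the disintegration $\pi(dx,dy)=\pi_1(dx_1,dy_1)\pi^{x_1,y_1}(dx_{2:N},dy_{2:N})$ this last statement is precisely the $t\ge 2$ part of $(\star)$. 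Combining the two, ``$\pi^{x_1,y_1}\in\cplbc(\mu^{x_1},\nu^{y_1})$ for $\pi_1$-a.a.\ $(x_1,y_1)$'' is equivalent to $(\star)$, hence to $\pi\in\cplbc(\mu,\nu)$, which is the claim.

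All the mathematical content is carried by Proposition~\ref{prop:bicausality_crit}; the step I expect to be the main, though entirely routine, obstacle is the bookkeeping of the various ``almost everywhere'' qualifiers and, relatedly, the initial choice of mutually compatible regular conditional kernels, which is always possible on standard Borel spaces.
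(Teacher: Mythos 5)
Your overall strategy — applying Proposition~\ref{prop:bicausality_crit} once to $\pi$ on the full $N$-step space and once, conditionally, to $\pi^{x_1,y_1}$ on the $(N-1)$-step tail, then matching the two lists of conditions via compatible disintegration kernels and Fubini — is exactly what the paper has in mind (it offers no proof, treating this unfolding as routine), and most of your bookkeeping is sound, including the observation that condition (ii)(a) for $\pi$ is automatic given $\pi\in\cpl(\mu,\nu)$. There is, however, one inaccuracy worth flagging. You assert that condition (ii)(a), applied to $\pi^{x_1,y_1}$, ``is exactly the $t=1$ instance of $(\star)$.'' It is not: (ii)(a) only constrains the one-step marginal ${\pr_{X_2\times Y_2}}_\ast\pi^{x_1,y_1}$, demanding that it couple ${\pr_{X_2}}_\ast\mu^{x_1}$ with ${\pr_{Y_2}}_\ast\nu^{y_1}$, whereas the $t=1$ instance of $(\star)$ is the strictly stronger full-marginal statement $\pi^{x_1,y_1}\in\cpl(\mu^{x_1},\nu^{y_1})$. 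For $N\ge 3$ these are genuinely different conditions.

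The conclusion survives because (ii)(a) and (ii)(b) for $\pi^{x_1,y_1}$ are \emph{jointly} equivalent, by Proposition~\ref{prop:bicausality_crit}, to $\pi^{x_1,y_1}\in\cplbc(\mu^{x_1},\nu^{y_1})$, and this membership in particular forces $\pi^{x_1,y_1}\in\cpl(\mu^{x_1},\nu^{y_1})$, i.e.\ the $t=1$ instance of $(\star)$; conversely the $t=1$ instance trivially implies (ii)(a). So you should replace the erroneous identification by this two-step observation: ``(ii)(a)$+$(ii)(b) for $\pi^{x_1,y_1}$, $\pi_1$-a.e.'' is equivalent to ``$t=1$ instance of $(\star)$ together with (ii)(b) for $\pi^{x_1,y_1}$, $\pi_1$-a.e.''. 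With that correction in place, the Fubini argument matching (ii)(b) with the $t\ge 2$ part of $(\star)$, and hence the overall equivalence with $(\star)$ and with $\pi\in\cplbc(\mu,\nu)$, goes through as you wrote it.
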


%The following lemma states that these two notions of ``respecting time structure'' are compatible, namely a Monge coupling is causal if and only if it is supported by an adapted mapping.

\begin{prop}[{\cite[Theorem~3.1]{BeLa20}}]\label{prop:cplbc_compact}
	$\cplbc(\mu,\nu)$ is closed w.r.t.\ weak convergence.
\end{prop}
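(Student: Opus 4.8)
The plan is to reduce the bicausality of a weak limit to a family of scalar identities, each of which is preserved under weak convergence precisely because every element of $\cpl(\mu,\nu)$ has the same, fixed marginals. Throughout, write $C_b(\cdot)$ for the bounded continuous real functions. First I record that $\cpl(\mu,\nu)$ is weakly closed, since the marginal projections $\pi\mapsto(\pr_X)_\ast\pi$ and $\pi\mapsto(\pr_Y)_\ast\pi$ are weakly continuous; hence any weak limit of bicausal couplings is at least a coupling of $\mu$ and $\nu$.

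The next step is the reformulation of (bi)causality. For $f\in C_b(X)$ and $1\le t<N$ put $f_t:=\E_\mu[f\mid\F_t^X]$, a bounded Borel function of $x_{1:t}$ that depends only on $\mu$ and $f$ (not on $\pi$), and similarly $\phi_t:=\E_\nu[\phi\mid\F_t^Y]$ for $\phi\in C_b(Y)$. Unwinding the definition of causality, i.e.\ the conditional independence $\F_N^X\perp\F_t^Y\mid\F_t^X$ under $\pi$, and using that the $X$-marginal of $\pi$ is $\mu$ (so $\E_\pi[f\mid\F_t^X]=f_t$), one checks that $\pi\in\cpl(\mu,\nu)$ is causal if and only if
\[
\int h(y_{1:t})\,f(x)\,d\pi=\int h(y_{1:t})\,f_t(x_{1:t})\,d\pi\qquad\text{for all }1\le t<N,\ h\in C_b(Y_{1:t}),\ f\in C_b(X);
\]
the forward direction is a tower-property computation and the converse uses that bounded continuous functions are measure-determining on Polish spaces. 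Applying the same to $e_\ast\pi$ shows $\pi\in\cplbc(\mu,\nu)$ iff, in addition, $\int g(x_{1:t})\,\phi(y)\,d\pi=\int g(x_{1:t})\,\phi_t(y_{1:t})\,d\pi$ for all $t$, $g\in C_b(X_{1:t})$, $\phi\in C_b(Y)$. Thus $\cplbc(\mu,\nu)$ is the intersection, inside $\cpl(\mu,\nu)$ and over all the data $(t,f,h)$ and $(t,g,\phi)$, of the zero sets of the functionals $\pi\mapsto\int hf\,d\pi-\int hf_t\,d\pi$ and the mirrored ones.

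The crucial point is then a continuity lemma: if $\pi_n,\pi\in\cpl(\mu,\nu)$ with $\pi_n\weaklyto\pi$, then $\int\theta(x_{1:t})\,h(y_{1:t})\,d\pi_n\to\int\theta(x_{1:t})\,h(y_{1:t})\,d\pi$ for every bounded Borel $\theta:X_{1:t}\to\R$ and every $h\in C_b(Y_{1:t})$, and symmetrically with $X$ and $Y$ exchanged. Granting this, the term $\pi\mapsto\int hf\,d\pi$ is weakly continuous since $hf\in C_b(X\times Y)$, and the term $\pi\mapsto\int hf_t\,d\pi$ is covered by the lemma with $\theta=f_t$; hence each zero set above is weakly closed, and so is their intersection. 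The lemma follows from Lusin's theorem: setting $\mu_{1:t}:=(\pr_{X_{1:t}})_\ast\mu$, for $\epsilon>0$ pick (Lusin plus Tietze, with a truncation) a function $\tilde\theta\in C_b(X_{1:t})$ with $\|\tilde\theta\|_\infty\le\|\theta\|_\infty$ and $\mu_{1:t}(\theta\neq\tilde\theta)<\epsilon$; then $\int\tilde\theta h\,d\pi_n\to\int\tilde\theta h\,d\pi$ as the integrand is bounded continuous, while $\bigl|\int(\theta-\tilde\theta)h\,d\pi_n\bigr|\le 2\|h\|_\infty\|\theta\|_\infty\,\mu_{1:t}(\theta\neq\tilde\theta)<2\|h\|_\infty\|\theta\|_\infty\epsilon$ uniformly in $n$, with the identical bound for $\pi$, because $(\pr_{X_{1:t}})_\ast\pi_n=(\pr_{X_{1:t}})_\ast\pi=\mu_{1:t}$. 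Letting $\epsilon\downarrow 0$ proves the lemma.

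The main obstacle is exactly the non-continuity of the conditional-expectation term: $f_t=\E_\mu[f\mid\F_t^X]$ is only Borel, so $\pi\mapsto\int hf_t\,d\pi$ is not obviously weakly continuous. What rescues the argument is that, inside $\cpl(\mu,\nu)$, this integral tests $f_t$ against the fixed measure $\mu_{1:t}$, so a single Lusin approximation of $f_t$ controls the whole sequence $(\pi_n)_n$ simultaneously. (One could instead try an induction on $N$ via Corollary~\ref{cor:inductiveCharBicausal}, but transferring the weak convergence to the disintegrations $\pi_n^{x_1,y_1}$ runs into essentially the same measurability issue, so the direct route above seems preferable.)
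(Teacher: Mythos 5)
Your proof is correct. The paper does not supply its own argument but cites [BeLa20, Theorem~3.1]; your route---recasting bicausality as a family of bilinear test-function identities $\int h\,f\,d\pi=\int h\,f_t\,d\pi$ (and their mirror), with $f_t=\E_\mu[f\mid\F^X_t]$ determined by $\mu$ alone, and then recovering weak closedness from the observation that the only discontinuous factor $f_t$ can be Lusin-approximated uniformly along $\cpl(\mu,\nu)$ since the $X_{1:t}$-marginal is fixed---is the standard argument and is essentially the one in the cited reference.
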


\subsection{Time-dependent version of the representation of couplings}\label{subsec:Representation2}

Our next aim is to prove a time-dependent version of the representation of couplings as bijective Monge couplings on extended spaces. To that end, we will have to add an additional coordinate for randomization in each time step in the $X$- and $Y$-component. To keep notations short, we introduce the following abbreviations:
\begin{align}\label{eq:notation1}
	\widehat{X}_t := X_t \times [0,1], \qquad \widehat{X}_{s:t} := \prod_{i=s}^t\widehat{X}_i, \qquad \widehat{X}:=\widehat{X}_{1:N}.
\end{align}

We will not always be careful about the ordering of the spaces $X_i$ and $[0,1]$ in the definition of $\widehat{X}_{s:t}$ as product of those spaces. Instead of this, we agree to use consistent letters to name elements of those spaces unambiguously: Elements of $\widehat{X}_i$ are always called $(x_i,u_i)$, where $x_i \in X_i$ and $ u_i \in [0,1]$.

Therefore, $((x_s,u_s),\dots,(x_t,u_t))$ denotes the same element of $\widehat{X}_{s:t}$ as $(x_s,\dots,x_t,u_s,\dots,u_t)$ does, and the latter is often abbreviated by $(x_{s:t},u_{s:t})$. When evaluating functions $f : \widehat{X}_{s:t} \to \R$, we use the same convention, i.e.\ $f(x_{s:t},u_{s:t}):= f(x_s,\dots,x_t,u_s,\dots,u_t) := f((x_s,u_s),\dots,(x_t,u_t))$.

$\pr_X$ will always denote the projection $\widehat{X}_{s:t}  \to X_{s:t} : (x_{s:t},u_{s:t}) \mapsto x_{s:t}$.

For $\mu \in \prob(X)$ we define $\widehat{\mu} \in \prob(\widehat{X})$ via
\[
\int f d \widehat{\mu}:= \int f(x_{1:N},u_{1:N}) d\mu(x_{1:N})d\lambda(u_{1:N}),
\]
i.e. we have $\widehat\mu:= \mu \otimes \lambda^N$ when allowing for the minor abuse of notation to rearrange the factors in the product space $\widehat X$ to  $(\prod_{t=1}^N X_t)\times [0,1]^N$.

We use the same convention for the $Y$-component, where elements of $\widehat{Y_t}$ are called $(y_t,v_t)$ with $y_t \in Y_t$ and $v_t \in [0,1]$, etc.

The terms (bi)adapted and (bi)causal are always meant to be understood as the ordering of the spaces in \eqref{eq:notation1} suggests: In each time step we consider the spaces $\widehat{X}_t=X_t \times [0,1]$ and $\widehat{Y_t}=Y_t \times [0,1]$. Loosely speaking, \textit{in} (not before or after)  each time step we ``add'' one unit interval in the $X$-component and one unit interval in the $Y$-component. Explicitly,  a mapping $T: \widehat{X} \to \widehat{Y}$ is adapted if for all $t \le N$ there exists mappings $T_t : \widehat{X}_{1:t} \to \widehat{Y}_t$ s.t.\ $T(x_{1:N},u_{1:N})=(T_1(x_1,u_1),\dots, T_N(x_{1:N},u_{1:N}))$.

Using this notation, we can formulate the main result  of this subsection:
\begin{theorem}\label{thm:cpl_repr_timestep}
	Let $\mu \in \prob(X)$ and  $\nu \in \prob(Y)$. If $\pi \in \cplbc(\mu,\nu)$, then there exists a biadapted mapping  $T : \widehat{X} \to \widehat{Y}$ satisfying
	\begin{enumerate}[(i)]
		\item $T_\ast \widehat{\mu}=\widehat{\nu}$, or equivalently, $\widehat{\pi}:= (id,T)_\ast\widehat\mu \in \cplbc(\widehat{\mu},\widehat{\nu})$,
		\item ${\pr_{X\times Y}}_\ast\widehat{\pi}=\pi$.
	\end{enumerate}
	On the other hand, if $\pi \in \prob(X \times Y)$ and there exists a  biadapted mapping  $T : \widehat{X} \to \widehat{Y}$ such that (i) and (ii) are satisfied, then $\pi \in \cplbc(\mu,\nu)$.
\end{theorem}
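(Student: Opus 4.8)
The plan is to prove both directions by induction on the number of time steps $N$, using the inductive characterizations of bicausal couplings (Corollary~\ref{cor:inductiveCharBicausal}) and of biadapted mappings (Lemma~\ref{lem:indCharOfBiadapted}), with the static representation Theorem~\ref{thm:cpl_repr} providing the base case and the first-coordinate step. For the forward direction, I would decompose $\pi(dx,dy) = \pi_1(dx_1,dy_1)\,\pi^{x_1,y_1}(dx_{2:N},dy_{2:N})$; by Corollary~\ref{cor:inductiveCharBicausal}, $\pi_1 \in \cplbc({\pr_1}_\ast\mu, {\pr_1}_\ast\nu) = \cpl$ of the first marginals (one time step), and $\pi^{x_1,y_1} \in \cplbc(\mu^{x_1}, \nu^{y_1})$ for $\pi_1$-a.e.\ $(x_1,y_1)$. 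Applying the parametrized static result (Theorem~\ref{thm:cpl_repr}) to $\pi_1$ with trivial parameter space yields a Borel isomorphism $T_1 : \widehat X_1 \to \widehat Y_1$ with ${T_1}_\ast(\mu_1 \otimes \lambda) = \nu_1 \otimes \lambda$ and ${\pr_{X_1 Y_1}}_\ast(\id,T_1)_\ast(\mu_1\otimes\lambda) = \pi_1$. For the remaining coordinates, I would apply the induction hypothesis \emph{parametrically}: for each $(x_1,y_1)$ obtain a biadapted $T^{x_1,y_1} : \widehat X_{2:N} \to \widehat Y_{2:N}$ representing $\pi^{x_1,y_1}$; measurability of the assignment $(x_1,y_1) \mapsto T^{x_1,y_1}$ is exactly why Theorem~\ref{thm:cpl_repr} was stated in parametrized form, so I would carry the parameter $z = (x_1, y_1)$ through the induction. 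Since $T_1$ is not the identity on the $X_1$-component, I reindex via $x_1 \mapsto y_1 = \pr_{Y_1}(T_1(x_1, u_1))$ — but note $y_1$ depends on $u_1$ too; handling this cleanly means parametrizing the inductive map by the full pair $((x_1,u_1),(y_1,v_1))$, or equivalently defining $S(x_1,u_1, x_{2:N}, u_{2:N}) := T^{\,\pr_{Y_1}(T_1(x_1,u_1)),\,x_1}(x_{2:N},u_{2:N})$ so that the joint $X_1$-information is available. Then set $T(x_{1:N},u_{1:N}) := (T_1(x_1,u_1),\, S(x_1,u_1,x_{2:N},u_{2:N}))$; Lemma~\ref{lem:indCharOfBiadapted} (with $X_1$ replaced by $\widehat X_1$) gives that $T$ is biadapted, and a Fubini-type computation splitting $\widehat\mu = (\mu_1\otimes\lambda)\otimes(\text{kernel}\otimes\lambda^{N-1})$ together with properties (i),(ii) at both levels yields $T_\ast\widehat\mu = \widehat\nu$ and ${\pr_{X\times Y}}_\ast\widehat\pi = \pi$. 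The equivalence ``$T_\ast\widehat\mu = \widehat\nu$ iff $\widehat\pi \in \cplbc(\widehat\mu,\widehat\nu)$'' follows because $\widehat\pi$ is a biadapted Monge coupling and Lemma~\ref{lem:causal_monge_cpl} (referenced in the appendix) identifies the two notions; alternatively it is immediate from the reverse direction applied to $\widehat\mu,\widehat\nu$.

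For the reverse direction, suppose $\pi \in \prob(X\times Y)$ and a biadapted $T : \widehat X \to \widehat Y$ satisfies (i) and (ii). First, from ${\pr_{X\times Y}}_\ast\widehat\pi = \pi$ and $\widehat\mu, \widehat\nu$ having $X$- resp.\ $Y$-marginals $\mu,\nu$, one checks $\pi \in \cpl(\mu,\nu)$. To see bicausality I would again induct: write $T = (T_1, \dots, T_N)$ with $T_t : \widehat X_{1:t} \to \widehat Y_t$. Biadaptedness of $T$ means $T$ and $T^{-1}$ are both adapted; adaptedness of $T^{-1}$ forces, in particular, that the first component of $T^{-1}$ depends only on $(y_1,v_1)$, so $T_1 : \widehat X_1 \to \widehat Y_1$ is itself a biadapted (hence bicausal) bijection for one time step, and ${T_1}_\ast(\mu_1\otimes\lambda) = \nu_1\otimes\lambda$ follows from disintegrating (i) in the first coordinate. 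Then for $\lambda$-a.e.\ fixed $(x_1,u_1)$ the map $(x_{2:N},u_{2:N}) \mapsto (T_2,\dots,T_N)(x_1,u_1,\cdot)$ is biadapted on $\widehat X_{2:N}$ (this is the content of the converse to Lemma~\ref{lem:indCharOfBiadapted}, which I would extract — biadaptedness is preserved under fixing the first coordinate, since fixing $x_1$ preserves adaptedness of both $T$ and $T^{-1}$), it pushes the corresponding conditional of $\widehat\mu$ to that of $\widehat\nu$, and the associated coupling projects onto $\pi^{x_1,y_1}$. By the induction hypothesis each $\pi^{x_1,y_1} \in \cplbc(\mu^{x_1},\nu^{y_1})$, and by Corollary~\ref{cor:inductiveCharBicausal} we conclude $\pi \in \cplbc(\mu,\nu)$.

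The main obstacle is the bookkeeping around the first-coordinate reindexing: because the base-case map $T_1$ acts nontrivially on $X_1$ (not just on the auxiliary $[0,1]$), the ``parameter'' handed to the inductive construction in step $2{:}N$ is not simply $x_1$ but must encode enough of $(x_1,u_1)$ and the resulting $(y_1,v_1)$ to make the gluing both measurable and compatible with the disintegrations of $\widehat\mu$ and $\widehat\nu$; getting the right parameter space and verifying the Fubini computation goes through is the delicate part. A secondary technical point is the precise form of the converse to Lemma~\ref{lem:indCharOfBiadapted} needed in the reverse direction — that a biadapted map on a product restricts, for each fixed value of the first coordinate, to a biadapted map on the remaining coordinates — which is intuitively clear but should be stated and checked carefully (adaptedness of $T$ restricts trivially; adaptedness of $T^{-1}$ restricts because fixing $x_1$ on the domain corresponds to restricting $T^{-1}$ to the slice $\{T_1(x_1,\cdot)\} \times \widehat Y_{2:N}$, on which the remaining components of $T^{-1}$ remain adapted in the $y$-coordinates). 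I expect everything else — the measure-theoretic push-forward identities and the projection identity — to reduce to routine disintegration arguments once the inductive scaffolding is in place.
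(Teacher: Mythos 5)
Your forward direction is essentially the paper's argument: the paper proves a parametrized version (Theorem~\ref{thm:cpl_repr_timestep2}) by induction on $N$, decomposing $\pi^z$ at the first time step, applying Theorem~\ref{thm:cpl_repr} to $\pi_1^z$ to get $T_1^z$, invoking the induction hypothesis with parameter $(x_1,y_1,z)$ for the tail, and gluing via the formula
\[
T(z,x_{1:N},u_{1:N}) = \bigl(T_1^z(x_1,u_1),\,S^{x_1,\,\pr_{Y_1}(T_1^z(x_1,u_1)),\,z}(x_{2:N},u_{2:N})\bigr),
\]
with biadaptedness from Lemma~\ref{lem:indCharOfBiadapted}. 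Your ``or equivalently define $S(x_1,u_1,\dots) := T^{\pr_{Y_1}(T_1(x_1,u_1)),x_1}(\dots)$'' is exactly that formula; note that parametrizing the inductive step by $(x_1,y_1)$ (rather than $((x_1,u_1),(y_1,v_1))$) is the cleaner choice, since the conditional laws $\mu^{x_1},\nu^{y_1}$ depend only on $x_1,y_1$.

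Your reverse direction, however, takes a genuinely different route. The paper does not induct: it observes that the claim to prove is really ``$\widehat\pi \in \cplbc(\widehat\mu,\widehat\nu)$ implies $\pr_{XY*}\widehat\pi \in \cplbc(\mu,\nu)$'', which needs no information about $T$ at all, and then verifies the criterion of Proposition~\ref{prop:bicausality_crit} directly by testing against functions that ignore the auxiliary $u,v$ coordinates and using $\widehat\mu^{x_{1:t},u_{1:t}} = \mu^{x_{1:t}}\otimes\lambda^t$. You instead propose to induct via a converse of Lemma~\ref{lem:indCharOfBiadapted} (a biadapted map restricts, upon fixing the first $\widehat X_1$-slot, to a biadapted map on $\widehat X_{2:N}$). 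That converse is true and provable along the lines you sketch (adaptedness of $T^{-1}$ forces $T_1$ to be a bijection of $\widehat X_1$, and restriction preserves adaptedness of both $T$ and $T^{-1}$). But there is one glossed-over step: fixing $(x_1,u_1)$ and projecting gives a coupling $\rho^{x_1,u_1} := \pr_{XY*}(\id,T^{x_1,u_1})_*\widehat{\mu^{x_1}}$ that depends on $u_1$, not just on $(x_1,y_1)$, so it need \emph{not} equal $\pi^{x_1,y_1}$; rather, $\pi^{x_1,y_1}$ is a conditional average of $\rho^{x_1,u_1}$ over the fiber $\{u_1 : \pr_{Y_1}T_1(x_1,u_1)=y_1\}$. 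You then need to invoke convexity of $\cplbc(\mu^{x_1},\nu^{y_1})$ (which follows from Proposition~\ref{prop:bicausality_crit}) to conclude $\pi^{x_1,y_1}\in\cplbc$. Once that is added your argument closes; the paper's route avoids the averaging issue and yields the stronger conclusion (it applies to arbitrary $\widehat\pi\in\cplbc(\widehat\mu,\widehat\nu)$, not only biadapted Monge ones), while your route has the appeal of mirroring the forward induction and only requiring the elementary slicing lemma.
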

We postpone the (easy) proof of the  reverse implication to the end of this section and focus on proving the direct implication. This proof is done  by induction on the number of time steps. To avoid measurability issues in the induction step, we prove a slightly more general version of Theorem~\ref{thm:cpl_repr_timestep}.

\begin{theorem}\label{thm:cpl_repr_timestep2}
	Let $Z$ be a standard Borel space, $\mu$ be a kernel from $Z$ to $X$ and $\nu$ be a kernel from $Z$ to $Y$. Moreover, let $\pi$ be a kernel from $Z$ to $X \times Y$ such that $\pi^z \in \cplbc(\mu^z,\nu^z)$ for all $z \in Z$.
	
	Then there exists a Borel mapping $T : Z \times \widehat{X} \to \widehat{Y}$ such that for all $z \in Z$ the mappings $T^z : \widehat{X} \to \widehat{Y} : (x_{1:N},u_{1:N}) \mapsto T(z,x_{1:N},u_{1:N})$ are Borel isomorphisms satisfying 
	\begin{enumerate}[(i)]
		\item ${T^z}_\ast \widehat{\mu^z}=\widehat{\nu^z}$, or equivalently, $\widehat{\pi^z}:= (id,T^z)_\ast\widehat{\mu^z} \in \cplbc(\widehat{\mu^z},\widehat{\nu^z})$,
		\item ${\pr_{X\times Y}}_\ast\widehat{\pi^z}=\pi^z$.
	\end{enumerate}
\end{theorem}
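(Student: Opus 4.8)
The plan is to prove Theorem~\ref{thm:cpl_repr_timestep2} by induction on the number of time steps $N$, using the static parameterized result Theorem~\ref{thm:cpl_repr} as the base case $N=1$ (after renaming $Z$-variables suitably, and absorbing the extra unit interval as in the last paragraph of the proof of Theorem~\ref{thm:cpl_repr}). The inductive characterization of bicausal couplings, Corollary~\ref{cor:inductiveCharBicausal}, is the key structural tool: a coupling $\pi^z$ is bicausal iff its first-step marginal $\pi_1^z \in \cpl({\pr_1}_\ast\mu^z,{\pr_1}_\ast\nu^z)$ and, disintegrating $\pi^z(dx,dy) = \pi_1^z(dx_1,dy_1)\,\pi^{z,x_1,y_1}(dx_{2:N},dy_{2:N})$, one has $\pi^{z,x_1,y_1} \in \cplbc(\mu^{z,x_1},\nu^{z,y_1})$ for $\pi_1^z$-a.e.\ $(x_1,y_1)$. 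Lemma~\ref{lem:indCharOfBiadapted} provides the matching statement on the mapping side: gluing a first-step bijection with a family of biadapted maps on the remaining coordinates yields a biadapted map.

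The construction then goes as follows. First I would apply the static Theorem~\ref{thm:cpl_repr} with parameter space $Z$ to the first-step kernel $z \mapsto \pi_1^z$, obtaining a Borel family of Borel isomorphisms $T_1^z : \widehat{X}_1 \to \widehat{Y}_1$ with ${T_1^z}_\ast(\mu_1^z \otimes \lambda) = \nu_1^z \otimes \lambda$ and ${\pr_{X_1 Y_1}}_\ast(\id,T_1^z)_\ast(\mu_1^z \otimes \lambda) = \pi_1^z$. Next, the image of $\widehat{\mu_1^z}$-mass at a point $(y_1,v_1) = T_1^z(x_1,u_1)$ should be pushed forward by a biadapted map on $\widehat{X}_{2:N}$ that transports $\widehat{\mu^{z,x_1}}$ to $\widehat{\nu^{z,y_1}}$ and, projected down, recovers $\pi^{z,x_1,y_1}$. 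Such a map is furnished by the induction hypothesis applied with the enlarged parameter space $Z' := Z \times X_1 \times Y_1$ (or $Z \times \widehat{X}_1$) and the kernels $(z,x_1,y_1) \mapsto \mu^{z,x_1}$, $(z,x_1,y_1) \mapsto \nu^{z,y_1}$, $(z,x_1,y_1) \mapsto \pi^{z,x_1,y_1}$; note this triple satisfies the bicausality hypothesis precisely by Corollary~\ref{cor:inductiveCharBicausal}. Calling the resulting Borel family $T_{\geq 2}$, I would then set
\[
T^z(x_{1:N},u_{1:N}) := \bigl(T_1^z(x_1,u_1),\, T_{\geq 2}^{\,z,x_1,y_1}(x_{2:N},u_{2:N})\bigr), \quad \text{where } (y_1,v_1) = T_1^z(x_1,u_1).
\]
By Lemma~\ref{lem:indCharOfBiadapted} each $T^z$ is biadapted; it is a Borel isomorphism since $T_1^z$ and each $T_{\geq 2}^{z,x_1,y_1}$ are, and its inverse is built the same way from the (biadapted) inverses. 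Property (i) follows by disintegrating $\widehat{\mu^z} = \widehat{\mu_1^z} \otimes \widehat{\mu^{z,x_1}}$ (up to rearrangement) and applying the two pushforward identities in turn; property (ii) follows from the two projection identities, exactly mirroring the chained computations in the proof of Theorem~\ref{thm:cpl_repr}.

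The main obstacle I anticipate is measurability bookkeeping in the induction step: one must check that $(z,x_1,u_1) \mapsto T_1^z(x_1,u_1)$ is jointly Borel, that $(z,x_1,y_1) \mapsto (\mu^{z,x_1},\nu^{z,y_1},\pi^{z,x_1,y_1})$ are Borel kernels so that the induction hypothesis genuinely applies with parameter space $Z'$, and that the composite $T$ — which involves substituting the Borel function $(y_1,v_1) = T_1^z(x_1,u_1)$ into the parameter slot of $T_{\geq 2}$ — is Borel; this is where the \emph{parameterized} formulation pays off and is exactly the reason Theorem~\ref{thm:cpl_repr_timestep2} is stated instead of Theorem~\ref{thm:cpl_repr_timestep} directly. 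A minor secondary point is the consistent-rearrangement convention for the factors of $\widehat{X}$ and $\widehat{Y}$ (the interleaving of the $X_i$'s with unit intervals versus grouping all unit intervals), which the paper has already flagged; once one fixes that convention the pushforward computations are routine. Finally, the reverse implication of Theorem~\ref{thm:cpl_repr_timestep} — that existence of such a biadapted $T$ forces $\pi \in \cplbc(\mu,\nu)$ — follows since $(\id,T)_\ast\widehat\mu$ is bicausal (biadapted Monge couplings are bicausal) and bicausality is preserved under the coordinate projection $\widehat{X}\times\widehat{Y}\to X\times Y$, which respects the filtrations by the ordering convention.
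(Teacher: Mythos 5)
Your proposal is correct and follows essentially the same path as the paper: induction on $N$ with Theorem~\ref{thm:cpl_repr} as base case, Corollary~\ref{cor:inductiveCharBicausal} to disintegrate bicausality one step at a time, the induction hypothesis applied with enlarged parameter space $X_1\times Y_1\times Z$, the glued map $T^z(x_{1:N},u_{1:N})=(T_1^z(x_1,u_1),S^{x_1,\,\pr_{Y_1}T_1^z(x_1,u_1),\,z}(x_{2:N},u_{2:N}))$, and Lemma~\ref{lem:indCharOfBiadapted} to conclude biadaptedness. The measurability bookkeeping you flag is indeed the reason the paper carries the parameter $Z$ through the statement, exactly as you say.
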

\begin{proof}
	For one time step (i.e.\ $X=X_1$, $Y=Y_1$) bicausality is a trivial condition and biadapted is equivalent to bijective. Therefore, Theorem~\ref{thm:cpl_repr} is exactly the claim for one time step.
	
	Assume that we have already proven Theorem~\ref{thm:cpl_repr_timestep2} for $N-1$ time steps. Let $\mu$ be a kernel from $Z$ to $X_{1:N}$, $\nu$ be a kernel from $Z$ to $Y_{1:N}$ and $\pi$ be a kernel from $Z$ to $X_{1:N} \times Y_{1:N}$ satisfying $\pi^z \in \cplbc(\mu^z,\nu^z)$ for all $z \in Z$. 
	
	For each $z \in Z$ we can decompose $\mu^z,\nu^z$ and $\pi^z$ as
	\[
	d\mu^z=d\mu_1^zd\mu^{x_1,z}, \qquad d\nu^z=d\nu_1^zd\nu^{y_1,z}, \qquad d\pi^z=d\pi_1^zd\pi^{x_1,y_1,z}.
	\]
	and by Corollary \ref{cor:inductiveCharBicausal} it holds $
	\pi^{x_1,y_1,z} \in \cpl_{bc}(\mu^{x_1,z},\nu^{y_1,z}) 
	$. 
	
	By the induction hypothesis, there exists a Borel mapping
	\[
	S: (X_1\times Y_1 \times Z) \times \widehat{X}_{2:N} \to \widehat{Y}_{2:N}
	\]
	such that for all $(x_1,y_1,z) \in X_1\times Y_1 \times Z$ the mapping $S^{x_1,y_1,z}: X_{2:N} \to Y_{2_N} :(x_{2:N},y_{2:N}) \mapsto S(x_1,y_1,z,x_{2:N},y_{2:N})$ is a Borel isomorphism satisfying\footnote{As the notation given at the beginning of this section suggests, $\widehat{\mu^{x_1,z}} \in \prob(\widehat X_{2:N})$ is defined via
		\[
		\int f d\widehat{\mu^{x_1,z}} := \int f(x_{2:N},u_{2:N}) d\mu^{x_1,z}(x_{2:N})d\lambda^{N-1}(u_{2:N})
		\]
	}
	\begin{enumerate}[(i)]
		\item ${S^{x_1,y_1,z}}_\ast\widehat{\mu^{x_1,z}}=\widehat{ \nu^{y_1,z} }$,
		\item ${\pr_{XY}}_\ast\widehat{\pi^{x_1,y_1,z}} = \pi^{x_1,y_1,z}$, where $\widehat{\pi^{x_1,y_1,z}} := (\id,S^{x_1,y_1,z})_\ast \widehat{\mu^{x_1,z}}$.
	\end{enumerate}

	Moreover, by Theorem~\ref{thm:cpl_repr} there exists a Borel mapping
	\[
	T_1 : Z \times \widehat{X}_1 \to \widehat{Y}_1
	\]
	such that for all $z \in Z$, the mapping $T_1^z: \widehat{X}_1 \to \widehat{Y}_1 : (x_1,u_1) \mapsto T_1(z,x_1,u_1)$ is a Borel isomorphism satisfying
	${T^z_1}_\ast\widehat{\mu_1^z}=\widehat{\nu_1^z}$ and ${\pr_{XY}}_\ast(\id,T_1^z)_\ast\widehat{\mu_1^z}=\pi_1^z$.
	
	We define the mapping
	\[
	T: Z \times \widehat{X} \to \widehat{Y} : (z,x_{1:N},u_{1:N}) \mapsto (T_1^z(x_1,u_1),S^{x_1,\pr_{Y_1}(T_1^z(x_1,u_1))}(x_{2:N},u_{2:N}),z   )
	\]
	We have to check that $T$ has the desired properties:
	
	Clearly, $T$ is measurable as composition. Fix $z \in Z$. The mapping $T^z$ is biadapted by Lemma \ref{lem:indCharOfBiadapted}. 
	
	In order to check that $T_\ast\widehat{\mu^z}=\widehat{\nu^z}$, we consider an arbitrary Borel function  $f:\widehat{Y} \to \R$. We achieve by using the properties ${S^{x_1,y_1,z}}_\ast\widehat{ \mu^{x_1,z} }=\widehat{\nu^{y_1,z}}$ and ${T^z_1}_\ast\widehat{\mu_1^z}=\widehat{\nu_1^z}$
	\begin{align*}
		\int f(y_{1:N},v_{1:N})& dT^z_\ast\widehat{\mu^z}(y_{1:N},v_{1:N})= \\
		=&\int f(T_1^z(x_1,u_1),S^{x_1,\pr_{Y_1}(T_1^z(x_1,u_1)),z}(x_{2:N},u_{2:N})   )d\widehat{\mu^{x_1,z}}(x_{2:N},u_{2:N})d\widehat{\mu_1^z}(x_1,u_1)\\
		=&\int f(T^z_1(x_1,u_1),y_{2:N},v_{2:N})  \underbrace{d {S^{x_1,\pr_{Y_1}(T_1^z(x_1,u_1)),z}}_\ast\widehat{\mu^{x_1,z}}(y_{2:N},v_{2:N})}_{=d\reallywidehat{\nu^{\pr_{Y_1}\circ T^z_1(x_1,u_1),z}} (y_{2:N},v_{2:N}) })d\widehat{\mu_1^z}(x_1,u_1)\\
		=&\int f(y_{1:N},v_{1:N}) d\widehat{\nu^{y_1,z}}(y_{2:N},v_{2:N})d{T^z_1}_\ast\widehat{\mu_1^z}(y_1,v_1)\\
		=&\int f(y_{1:N},v_{1:N})d\widehat{\nu^z}(y_{1:N},v_{1:N}),
	\end{align*}
	i.e.\ $T^z_\ast\widehat{\mu^z}=\widehat{\nu}^z$.
	
	It remains to show that $\widehat{\pi^z} := (\id,T^z)_\ast\widehat{\mu^z}$ satisfies ${\pr_{XY}}_\ast\widehat{\pi^z}=\pi^z$. 
	Using ${\pr_{XY}}_\ast\widehat{\pi^{x_1,y_1,z}}=\pi^{x_1,y_1,z}$ and ${\pr_{XY}}_\ast (\id,T^z_1)_\ast\widehat{\mu_1^z}=\pi_1^z$ we obtain for any Borel function $f: X \times Y \to \R$:
	\begin{align*}
		\int f(x_{1:N},y_{1:N}) &d{\pr_{XY}}_\ast\widehat{\pi^z}(x_{1:N},y_{1:N}) =\\
		=&\int f (\pr_{XY} \circ (\id,T^z)(x_{1:N},u_{1:N}))d\widehat{\mu^{x_1,z}}(x_{2:N},u_{2:N})d\widehat{\mu_1^z}(x_1,u_1) \\
		=&\int f(\pr_{XY}((\id,T^z_1)(x_1,u_1)), \pr_{XY}((\id,S^{x_1,\pr_{Y_1}\circ T^z_1(x_1,u_1),z})(x_{2:N},u_{2:N})))\\[-0.2cm] & \qquad\qquad\qquad\qquad\qquad\, d\widehat{\mu^{x_1,z}}(x_{2:N},u_{2:N})d\widehat{\mu_1^z}(x_1,u_1)\\
		=&\int f(\pr_{XY}((\id,T_1)(x_1,u_1)),x_{2:N},y_{2:N})\\[-0.2cm] & \qquad\qquad\qquad\qquad\qquad
		d\pi^{x_1,\pr_{Y_1}\circ T^z_1(x_1,u_1),z}(x_{2:N},y_{2:N}) d\mu_1^z(x_1)d\lambda(u_1)\\
		=&\int f(x_{1:N},y_{1:N})d\pi^{x_1,y_1,z}(x_{2:N},y_{2:N}) d\pi^z_1(x_1,y_1)\\
		=& \int f(x_{1:N},y_{1:N}) d\pi(x_{1:N},y_{1:N}),
	\end{align*}
	which yields the desired result ${\pr_{XY}}_\ast\widehat{\pi^z} = \pi^z$.
\end{proof}

\begin{proof}[Proof of the reverse implication in Theorem~\ref{thm:cpl_repr_timestep}]
	We show a slightly more general claim: If $\widehat{\pi} \in \cplbc(\widehat{\mu},\widehat{\nu})$, then $\pi:= {\pr_{XY}}_\ast\widehat{\pi} \in \cplbc(\mu,\nu)$.  By Proposition~\ref{prop:bicausality_crit} it is enough to show that for every  $t<N$
	\begin{align}\label{eq:prf_conv_1}
		\widehat{\pi}^{x_{1:t},u_{1:t},y_{1:t},v_{1:t}} \in \cpl(\widehat{\mu}^{x_{1:t}, u_{1:t}} , \widehat{\nu}^{y_{1:t}, v_{1:t}}) \qquad \text{for $\widehat{\pi}_t$ almost all } (x_{1:t},u_{1:t},y_{1:t},v_{1:t})  
	\end{align}
	implies
	\begin{align}\label{eq:prf_conv_2}
		\pi^{x_{1:t},y_{1:t}} \in \cpl(\mu^{x_{1:t}} , \nu^{y_{1:t}}) \qquad \text{for $\pi_t$ almost all } (x_{1:t},y_{1:t}). 
	\end{align}
	We show that the condition on the first marginal in \eqref{eq:prf_conv_1} implies the condition on the first marginal in \eqref{eq:prf_conv_2}. The corresponding statement for the second marginal can be established in the same way. Note that 
	\begin{align}\label{eq:prf_conv_3}
		\widehat{\mu}^{x_{1:t},u_{1:t}} = \widehat{\mu^{x_{1:t}}} = \mu^{x_{1:t}} \otimes \lambda^t \quad \text{ and } \quad  \widehat{\pi}^{x_{1:t},u_{1:t},y_{1:t},v_{1:t}} = \widehat{\pi^{x_{1:t},y_{1:t}}} = \pi^{x_{1:t},y_{1:t}} \otimes \lambda^{2t}
	\end{align}
	for almost all $(u_{1:t},v_{1:t})$.
	Condition \eqref{eq:prf_conv_1} implies that we have for all $f_t : \widehat{X_{1:t}} \times \widehat{Y_{1:t}} \to \R $ and $g_t : \widehat{X_{t+1:N}} \to \R $ bounded measurable
	\begin{align*}
		&\int f_t(x_{1:t},u_{1:t},y_{1:t},v_{1:t}) g_t(x_{t+1:N},u_{x+1:N}) d\widehat{\pi}(x,u,y,v)=\\
		&\int f_t(x_{1:t},u_{1:t},y_{1:t},v_{1:t}) g_t(x_{t+1:N},u_{x+1:N}) d\widehat{\mu}^{x_{1:t},u_{1:t}}(x_{t+1:N},u_{x+1:N}) d\widehat{\pi_t}(x_{1:t}, u_{1:t},y_{1:t},v_{1:t})  
	\end{align*}
	Applying this fact to functions of the form  $f_t(x_{1:t},u_{1:t},y_{1:t},v_{1:t}) = f_t(x_{1:t},y_{1:t})$ and $g_t(x_{t+1:N},u_{t+1:N}) = g_t(x_{t+1:N})$ and invoking \eqref{eq:prf_conv_3} yields
	\begin{align*}
		\int f_t(x_{1:t},y_{1:t}) g_t(x_{t+1:N}) d\pi(x,y)=
		\int f_t(x_{1:t},y_{1:t}) g_t(x_{t+1:N}) d\mu^{x_{1:t}}(x_{t+1:N}) d\pi_t(x_{1:t}, y_{1:t}),  
	\end{align*}
	hence ${\pr_X}_\ast \pi^{x_{1:t},y_{1:t}} = \mu^{x_{1:t}}$ for $\pi_t$-almost all $x_{1:t}$.
\end{proof}

\subsection{Denseness of biadapted mappings in the set of bicausal couplings}\label{subsec:Denseness2}
Analogously to Section \ref{subsec:Denseness1} we use the representation result, Theorem~\ref{thm:cpl_repr_timestep}, to prove  denseness.

First, we state the regularity assumption on the marginals, which is essential for our proof: 
\begin{asn}\label{asn:cont_disint}
	Let $\mu \in \prob(X_{1:N})$. We say that $\mu$ satisfies Assumption \ref{asn:cont_disint} if $\mu$ has a disintegration
	\[
	d\mu(x_{1:N})=d\mu_1(x_1)d\mu^{x_1}(x_2)\cdots d\mu^{x_{1:N-1}}(x_N)
	\] 
	such that $\mu_1$ is continuous and, for all $t<N$ and $x_{1:t} \in X_{1:t}$, the measure $\mu^{x_{1:t}}$ is continuous.
\end{asn}
\begin{remark}
	Let $X_t = \R$ for $t \in \{1,\dots,N\}$. If $\mu \in \prob(X_{1:N})=\prob(\R^N)$ is absolutely continuous w.r.t.\  Lebesgue measure, it satisfies Assumption \ref{asn:cont_disint}. 
\end{remark}

In order to derive the denseness result from the representation result (Theorem~\ref{thm:cpl_repr_timestep}), we need to show that for every $\mu \in \mathcal{P}(X)$ there is an arbitraryily cheap biadapted map that pushes $\mu$ to $\widehat{\mu}$ (and that this construction can be chosen in a measureable way). As a first step, we strengthen the corresponding result for $N=1$ time periods from Section~\ref{sec:static_case}. 

\begin{lemma}\label{lem:basecase}
Let $\mu_1$ be a kernel from $Z$ to $X_1$ such that $\mu_1^z$ is continuous for every $z \in Z$. Let $S$ be a Polish space\footnote{We require that the metric $d_S$ induces the topology of $S$ but not that it is a complete metric.} and  $g : Z \times X_1 \to S $ be Borel and $\varepsilon >0$. Then there is a Borel map $\Phi : Z \times X_1 \to \widehat{X_1}$ such that for every $z \in Z$, the map $\Phi^z = \Phi(z,\cdot) : X_1 \to \widehat{X_1}$ is a Borel isomorphism satisfying $\Phi^z_\ast\mu_1^z=\widehat{\mu_1^z}$, $\int d_{X_1}(x_1, \pr_{X_1}(\Phi^z(x_1)))\wedge 1 \, \mu_1^z(dx_1) \le \varepsilon$ and $\int d_S(g^z(x_1), g^z(\pr_X(\Phi^z(x_1))))\wedge 1 \, \mu_1^z(dx_1) \le \varepsilon$.    
\end{lemma}
\begin{proof}

Let $ \mathcal{A}:=(A_i)_{i \in \N}$ be a partition of $X$ into Borel sets satisfying $\|\A\| \le \varepsilon$ and let $ \mathcal{B}:=(B_j)_{j \in \N}$ be a partition of $S$ into Borel sets satisfying $\|\B\| \le \varepsilon$. We set 
$
M^{i,j}_z := A_i \cap (g^z)^{-1}(B_j) 
$.
Note that for $x,x' \in M^{i,j}_z$ we have $d_X(x,x')\le \varepsilon$ and $d_S(g^z(x),g^z(x')) \le \varepsilon$.  Hence, applying Proposition~\ref{prop:part_pushfwd2} yields the claim.
\end{proof}

% \begin{prop}\label{prop:part_pushfwd_adapted}
% Suppose that $\mu \in \prob(X)$ satisfies Assumption \ref{asn:cont_disint} and let $\varepsilon>0$. Then there is a biadapted map $\Phi_\mu : X \to \widehat{X}$ such that ${\Phi_\mu}_\ast\mu=\widehat{\mu}$ and $\int d_X(x,\pr_X(\Phi_\mu(x))) \wedge 1 \, \mu(dx)$. 
% \end{prop}

%We want to prove this proposition by induction on the number of time steps. In order to avoid measurability issues in the induction step, we have to prove a slightly stronger claim:
\begin{prop}\label{prop:part_pushfwd_adapted2}
	Let $Z$ be a Polish space and, let $\mu$ be a kernel from $Z$ to $X$ s.t.\ $\mu^z$ satisfies Assumption \ref{asn:cont_disint} for all $z \in Z$, and let $\varepsilon>0$. Then there exists a Borel mapping $\Phi : Z \times X \to \widehat{X}$ s.t.\ for all $z \in Z$ the mapping $\Phi^{z} = \Phi(z,\cdot ) : X \to \widehat{X}$ is  biadapted and satisfies $\Phi^{z}_\ast\mu^z = \widehat{\mu^z}$ and $\int d_X(x,\pr_X(\Phi^z(x))) \wedge 1\, d\mu^z(x) \le \varepsilon$.
\end{prop}

Next, we state the denseness result, extending Theorem~\ref{MainTheorem} from the introduction.

\begin{theorem}\label{thm:main}
	Let $\mu \in \prob(X)$ and $\nu \in \prob(Y)$ satisfy Assumption \ref{asn:cont_disint}. Then the set of biadapted Monge couplings between $\mu$ and $\nu$ is weakly dense in  $\cplbc(\mu,\nu)$, i.e.
	$$\overline{\cplbc(\mu,\nu) \cap \cplbm(\mu,\nu)} = \cplbc(\mu,\nu). $$
	Let $p \in [1,\infty)$. If $\mu$ and $\nu$ have finite $p$-th moments (w.r.t. compatible metrics $d_X, d_Y$), we have $\W_p$-denseness as well.	

    Moreover, the approximating sequences can be chosen in a measurable way: Given a Polish space $Z$ and a kernel $\pi$ from $Z$ to $X \times Y$ such that  $ \mu^z := {\pr_X}_\ast \pi^z$ and  $\nu^z:= {\pr_Y}_\ast \pi^z$ satisfy Assumption~\ref{asn:cont_disint} and $\pi^z \in \cplbc(\mu^z,\nu^z)$ for every $z$, there exist Borel maps $T_n : Z \times X  \to  Y$ such that for every $z \in Z$, $T_n^z : X \to Y$ is a biadapted map with ${T_n^z}_\ast \mu^z = \nu^z $ and $(\id,T_n^z)_\ast\mu^z \weaklyto \pi^z$. 
\end{theorem}

We  prove Proposition~\ref{prop:part_pushfwd_adapted2} and Theorem~\ref{thm:main} simultaneously by induction on the number of time steps $N$. Specifically, we will show that  Theorem~\ref{thm:main} for $N-1$ time periods implies Proposition~\ref{prop:part_pushfwd_adapted2} for $N$ time periods and that Proposition~\ref{prop:part_pushfwd_adapted2} for $N$ time periods implies Theorem~\ref{thm:main} for $N$ time periods. Note that Lemma~\ref{lem:basecase} serves as base case for this induction as it is clearly stronger than Proposition~\ref{prop:part_pushfwd_adapted2} for $N=1$.

% \begin{proof}[Proof of Proposition~\ref{prop:part_pushfwd_adapted2} for $N=1$ time periods]
% The claim follows immediately by applying Proposition~\ref{prop:part_pushfwd2} to a countable partition \(\mathcal{M}\) of the space \(X\) into Borel sets with  \(\|\mathcal{M}\| \le \varepsilon\).
% \end{proof}

\begin{proof}[Proof of Proposition~\ref{prop:part_pushfwd_adapted2} for $N$ time periods using Theorem~\ref{thm:main} for $N-1$ time periods]$\quad$\\ Fix $\epsilon>0$. We further disintegrate the kernel $\mu$ to
\[
d\mu^z(x_{1:N}) = d\mu_1^z(x_1)d\mu^{z,x_1}(x_{2:N})
\]
and note that the disintegration can be chosen such that $\mu^{z,x_1}$ satisfies Assumption \ref{asn:cont_disint} for every $x_1 \in X_1$ and $z \in Z$. 

By Lemma~\ref{lem:basecase} applied with $(S,d_S)=(\mathcal{P}(X_{2:N}),\AW)$ where $\AW$ is the adapted Wasserstein distance w.r.t.\ the metric $d_{X_{2:N}}\wedge 1$ and $g(z,x_1) = \mu^{z,x_1}$, there is a Borel map 
$$
\Phi_{1} : Z \times X_1 \to \widehat{X_1} 
$$
s.t.\ for all $z \in Z$ the mapping $\Phi_{1}^z=\Phi_1(z,\cdot) : X_1  \to \widehat{X_1}$ is a Borel isomorphism satisfying  ${\Phi_{1}^z}_\ast\mu_1^z=\widehat{\mu_1^z}$ and 
\begin{align}\label{eq:phi1_distortion}
\int d_{X_1}(x_1, \pr_{X_1}(\Phi_{1}^z(x_1))) \wedge 1 \mu_1^z(dx_1) \le \varepsilon/3, \quad \int \AW(\mu^{z,x_1}, \mu^{z,\Phi_{1}^z(x_1)}) d\mu_1^z(x_1) \le \varepsilon/3 .
\end{align}

Next, we derive from Theorem~\ref{thm:main} for $N-1$ time periods that there is a measurable mapping 
	\begin{align*}
	\Psi : (Z \times X_1) \times X_{2:N} \to \widehat{X}_{2:N}
	\end{align*}
	s.t.\ for all $(z,x_1) \in Z \times X_1$ the mapping $\Psi^{z,x_1}=\Psi(z,x_1,\cdot) :  X_{2:N} \to \widehat{X}_{2:N}$ is a biadapted mapping satisfying ${\Psi^{z,x_1}}_\ast\mu^{z,x_1}=\reallywidehat{\mu^{z,\Phi^z_{1}(x_1)}}$ and 
    \begin{align}\label{eq:almostOpt}
    \int d_{X_{2:N}}(x_{2:N}, \pr_{X_{2:N}}(  \Psi^{z,x_1}( x_{2:N} )   ) ) \wedge 1 \,  d\mu_1^{z,x_1}(x_{2:N}) \le \AW(\mu^{z,x_1}, \mu^{z,\Phi_{1}^z(x_1)}) + \varepsilon/3.
    \end{align}
    %where $\AW$ denotes the adapted Wasserstein distance on $\mathcal{P}(X_{2:N})$ w.r.t.\ the underlying metric $d_{X_{2:N}} \wedge 1$. 
    Indeed, such a map exists because $\AW(\mu^{z,x_1}, \mu^{z,\Phi_{1}^z(x_1)})$ is precisely given by the minimum of $\int d_{X_{2:N}}(x_{2:N}, y_{2:N}) \wedge 1 d\pi(x_{2:N}, y_{2:N}, v_{2:N}  )$ among all $\pi \in \cplbc(\mu^{z,x_1}, \reallywidehat{ \mu^{z,\Phi_{1}^z(x_1)} } )$ and Theorem~\ref{thm:main} for $N-1$ time periods guarantees the existence of an $\varepsilon/3$-optimal biadapted map for this bicausal transport problem (and its measurable dependence on the parameter $z$). 
	
	Next, we define the mapping 
	\[
	\Phi : Z \times X \to \widehat{X} : (z,x_{1:N}) \mapsto (\Phi_{1}^z(x_1),\Psi^{z,x_1}(x_{2:N})).
	\]
	Clearly, $\Phi$ is measurable as concatenation and for all $z \in Z$ the map $\Phi^z$ is biadapted by Lemma~\ref{lem:indCharOfBiadapted}. Next, we observe that ${\Phi^z}_\ast \mu^z = \widehat{\mu^z}$. Indeed, for all bounded Borel $f : \widehat{X} \to \R$ we find
	\begin{align*}
		&\int f(x,u)d{\Phi^{z}}_\ast\mu^z(x,u) = \\
		&=\iint f(\Phi_{1}^z(x_1),\Psi^{z,x_1}(x_{2:N})) d\mu^{z,x_1}(x_{2:N})d\mu_1^z(x_1)\\
		&=\iint f(\Phi_{1}^z(x_1),x_{2:N},u_{2:N}) \underbrace{d {\Psi^{z,x_1}}_\ast\mu^{z,x_1}(x_{2:N},u_{2:N})}_{  =d\reallywidehat{\mu^{z,\Phi^z_{1}(x_1)}}(x_{2:N},u_{2:N})  }d\mu_1^z(x_1)\\
		&=\iint f(x_1,u_1,x_{2:N},u_{2:N}) d\reallywidehat{\mu^{z,x_1}}(x_{2:N},u_{2:N}) d{\Phi_1^z}_\ast\mu_1^z(x_1,u_1)\\
		&=\int f(x,u)d\reallywidehat{\mu^z}(x,u),
	\end{align*}
	which yields ${\Phi^{z}}_\ast\mu^z = \reallywidehat{\mu^z} $. Using \eqref{eq:phi1_distortion} and \eqref{eq:almostOpt}, we estimate
\begin{align*}
    \int d_X(x,&\pr_X(\Phi^z(x))) \wedge 1\, d\mu^z(x) \\
    &\le \int d_{X_1}(x_1, \pr_{X_1}(\Phi^z_{1}(x_1)) \wedge 1 \, d\mu^z_1(x_1)  \\
    &\qquad\qquad+\iint d_{X_{2:N}}(x_{2:N}, \pr_{X_{2:N}}(\Psi^{z,x_1}(x_{2:N}))) \wedge 1 \, d\mu^{z,x_1}(x_{2:N}) d\mu_1^z(x_1) \\
    &\le \frac{2\varepsilon}{3} + \int \AW(\mu^{z,x_1}, \mu^{z,\Phi^z_{1}(x_1)}) \, d\mu_1^z(x_1) \le \varepsilon. \qedhere
\end{align*}
\end{proof}

\begin{proof}[Proof of Theorem~\ref{thm:main} for $N$ time periods using Proposition~\ref{prop:part_pushfwd_adapted2} for $N$ time periods]
$\quad$\\
Let $\pi$ be a kernel from $Z$ to $X \times Y$ such that  $ \mu^z := {\pr_X}_\ast \pi^z$ and  $\nu^z:= {\pr_Y}_\ast \pi^z$ satisfy Assumption~\ref{asn:cont_disint} and $\pi^z \in \cplbc(\mu^z,\nu^z)$ for every $z \in Z$. By  Theorem~\ref{thm:cpl_repr_timestep2}, there is a Borel mapping $T : Z \times \widehat{X} \to \widehat{Y}$ such that for all $z \in Z$ the mappings $T^z : \widehat{X} \to \widehat{Y} : (x_{1:N},u_{1:N}) \mapsto T(z,x_{1:N},u_{1:N})$ are Borel isomorphisms satisfying 
	\begin{enumerate}[(i)]
		\item ${T^z}_\ast \widehat{\mu^z}=\widehat{\nu^z}$, or equivalently, $\widehat{\pi^z}:= (id,T^z)_\ast\widehat{\mu^z} \in \cplbc(\widehat{\mu^z},\widehat{\nu^z})$,
		\item ${\pr_{X\times Y}}_\ast\widehat{\pi^z}=\pi^z$.
	\end{enumerate}
By Proposition~\ref{prop:part_pushfwd_adapted2}, there are Borel mappings $\Phi_n : Z \times X \to \widehat{X}$ and $\Psi_n : Z \times Y \to \widehat{Y}$ such that
\begin{enumerate}[(i)]
		\setcounter{enumi}{2}
		\item ${\Phi^z_n}_\ast\mu^z = \widehat{\mu^z}$ for every $z \in Z$,
		\item${\Psi^z_n}_\ast\nu^z = \widehat{\nu^z}$ for every $z \in Z$,
	\end{enumerate}
    and we have
\begin{enumerate}[(i)]
		\setcounter{enumi}{4}
		\item $\int d_X(x, \pr_X(\Phi_n^z(x))) \wedge 1 \, \mu^z(dx) \to 0$ for every $z \in Z$,
		\item$\int d_Y(y, \pr_Y(\Psi_n^z(y))) \wedge 1 \, \nu^z(dy) \to 0$ for every $z \in Z$.
	\end{enumerate}

We define the desired maps $T_n$ by setting for every $z \in Z$ and $n \in \N$, 
\[
	T^z_n := {(\Psi^z_n)}^{-1} \circ T^z \circ \Phi^z_n : X \to Y .
	\]
It is straightforward to see that $T_n$ is Borel and that for every $z \in Z$, $T_n^z$ is a biadapted map satisfying ${{T_n^z}_\ast}\mu^z=\nu^z$. 

It remains to show that $(\id,T_n^z)_\ast\mu^z \weaklyto \pi^z$ for every $z \in Z$. To this end, it suffices to show that for every $f : X \times Y \to \R$ that is 1-Lipschitz w.r.t.\ $(d_X+d_Y)\wedge 1$ we have $\int f(x,T^z_n(x))\, d\mu^z(x) \to \int f(x,y)\, d\pi^z(x,y)$. Indeed, if $f$ is such a 1-Lipschitz function, using that $T^z_n = {(\Psi^z_n)}^{-1} \circ T^z \circ \Phi^z_n$ and ${\Phi_n^z}_\ast \mu^z = \widehat{\mu^z}$, we estimate
\begin{align*}
    &\left| \int f(x,T_n^z(x)) \, d\mu^z(x) - \int f(x, ({\Psi_n^z})^{-1}(T(x,u))) d\widehat{\mu^z}(x,u)  \right|  = \\   
    &= \left| \int  f(x,T_n^z(x)) -  f(\pr_X(\Phi_n^z(x)),T_n^z(x))     d\mu^z(x) \right| \\
    &\le \int d_X(x, \pr_X(\Phi_n^z(x))) d\mu^z(x) \to 0.
\end{align*}
Moreover,  using that $\widehat{\pi^z} = (\id,T^z)_\ast\mu^z$ and ${\Psi^z_n}_\ast \nu^z = \widehat{\nu^z}$, we estimate
\begin{align*}
    &\left| \int f(x, ({\Psi_n^z})^{-1}(T(x,u))) d\widehat{\mu^z}(x,u) - \int f(x,y) d\pi^z(x,y)  \right| = \\
    &= \left|  \int f(x, (\Psi^z_n)^{-1}(y,v)) - f(x,y) d\widehat{\pi^z}(x,u,y,v)  \right| \\
    &\le \int d_Y(({\Psi_n^z})^{-1}(y,v), y) d\widehat{\nu^z}(y,v) = \int d_Y(y, \pr_Y(\Phi_n^z(y))) d\nu^z(y) \to 0.
\end{align*}
From these two estimates, we obtain $| \int f(x,T^z_n(x)) d\mu^z(x) - \int f(x,y)d\pi^z(x,y)| \to 0 $. 

If $\mu^z,\nu^z$ have finite $p$-th moments, for fixed $(x_0,y_0) \in X \times Y$, we have for every $n \in \N$
$$
\int d_{X \times Y}((x,T^z_n(x)),(x_0,y_0))d\mu^z(x) = \int d_X(x,x_0)d\mu^z(x) + \int d_Y(y, y_0) d\nu^z(y).
$$
Hence, $(\id,T_n^z)_\ast\mu^z \weaklyto \pi^z$  implies that $\W_p( (\id,T_n^z)_\ast\mu^z, \pi^z) \to 0$, see e.g.\ \cite[Theorem~7.12]{Vi03}. 
\end{proof}

\subsection{Discussion}
In \cite{BeLa20} a version of the result for causal couplings was shown, i.e.\ causal couplings supported on the graph of adapted mappings are dense in the set of causal couplings with fixed marginals. It was sufficient to require continuity assumptions for the first times tep of the $X$-marginal.

Since bicausality is causality in both directions, it is clear that we will need at least the same assumption for the $Y$-marginal as well. However, the assumptions in Theorem~\ref{thm:main} are significantly stronger than that. We give an example to illustrate why more restrictive assumptions than continuity of the $X$- and $Y$- marginals in the first time step are necessary.

\begin{example}\label{AbsolutelyImportant}
	Let $N=2$ and $X_1=X_2=Y_1=Y_2=[0,1]$. Consider the measures $\mu= \lambda \otimes \delta_0 \in \prob(X_1 \times X_2)$ and $\nu= \lambda^2 \in \prob(Y_1 \times Y_2)$. 
	
	Since $\mu$ and $\nu$ are both continuous measures, couplings between $\mu$ and $\nu$ supported on the graphs of bijections are dense in $\cpl(\mu,\nu)$, see Theorem~\ref{thm:cplbm_dense}. We show that there are no bicausal couplings between $\mu$ and $\nu$ that are supported on the graph of a bijection.
	
	Assume that there exists  $\pi \in \cplbc(\mu,\nu) \cap \cplbm(\mu,\nu)$ and decompose it as $d\pi(x_1,x_2,y_1,y_2)=d\pi_1(x_1,y_1)d\pi^{x_1,y_1}(x_2,y_2)$. By Corollary \ref{cor:inductiveCharBicausal} we get $\pi^{x_1,y_1} \in \cplbm(\mu^{x_1},\nu^{y_1})$ for $\pi_1$-almost all $(x_1,y_1)$. However, for all $x_1, y_1$ it holds $\cplbm(\mu^{x_1},\nu^{y_1}) =  \cplbm(\delta_0,\lambda)=\emptyset$. Therefore, such a $\pi$ cannot exist and $\cplbc(\mu,\nu) \cap \cplbm(\mu,\nu)$ is empty.
	
	%However, there is a sequence $(T_n)_n$ of adapted mappings pushing $\mu$ to $\nu$ such that $(\id,T_n)_\ast \mu \weaklyto \pi$: By Theorem~\ref{thm:cplbm_dense} there is a sequence $(F_n)_n$ such that ${F_n}_\ast\lambda=\lambda^2$ and $(\id,F_n)_\ast\lambda \weaklyto \lambda^3$. Then $T_n(x_1,x_2) := F_n(x_1)$ has the desired properties.
	
\end{example}

To close this section, we state the consequences of Theorem~\ref{thm:main} for the bicausal transport problem and  the adapted Wasserstein distance.

\begin{cor}
	Let $\mu \in \prob(X)$ and $\nu \in \prob(Y)$ satisfy Assumption \ref{asn:cont_disint} and $c : X \times Y \to \R$ be continuous and bounded. Then we have
	\[
	\inf \left\{ \int c d\pi \:\middle|\:  \pi \in \cpl_{bc}(\mu,\nu)   \right\} = \inf \left\{ \int c(x,T(x)) d\mu(x) \:\middle|\:  T \text{ biadapted, } T_\ast \mu=\nu \right\}.
	\]
\end{cor}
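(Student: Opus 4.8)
The plan is to read this off directly from Theorem~\ref{thm:main} together with the identification of biadapted Monge couplings provided by Lemma~\ref{lem:causal_monge_cpl}. Write $V_K := \inf\{\int c\,d\pi : \pi \in \cplbc(\mu,\nu)\}$ for the bicausal Kantorovich value and $V_M := \inf\{\int c(x,T(x))\,d\mu(x) : T \text{ biadapted}, T_\ast\mu = \nu\}$ for the biadapted Monge value; the goal is to show $V_K = V_M$.

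First I would establish $V_M \geq V_K$. If $T:X\to Y$ is biadapted with $T_\ast\mu=\nu$, then $\pi_T:=(\id,T)_\ast\mu$ is a coupling of $\mu$ and $\nu$ concentrated on the graph of the biadapted map $T$, so by Lemma~\ref{lem:causal_monge_cpl} it lies in $\cplbc(\mu,\nu)\cap\cplbm(\mu,\nu)$; since $\int c(x,T(x))\,d\mu(x)=\int c\,d\pi_T \geq V_K$, taking the infimum over all such $T$ yields $V_M\geq V_K$. (The same lemma, read in the other direction, shows every $\pi\in\cplbc(\mu,\nu)\cap\cplbm(\mu,\nu)$ is of the form $(\id,T)_\ast\mu$ for some biadapted $T$ pushing $\mu$ to $\nu$, hence contributes $\int c\,d\pi\geq V_M$; this is what I use next.) For the reverse inequality, fix $\pi\in\cplbc(\mu,\nu)$. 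By Theorem~\ref{thm:main} there is a sequence $(\pi_n)_n$ in $\cplbc(\mu,\nu)\cap\cplbm(\mu,\nu)$ with $\pi_n\weaklyto\pi$. Since $c$ is continuous and bounded, $\int c\,d\pi_n\to\int c\,d\pi$ by the definition of weak convergence. Writing $\pi_n=(\id,T_n)_\ast\mu$ with $T_n$ biadapted and ${T_n}_\ast\mu=\nu$, we get $\int c\,d\pi_n=\int c(x,T_n(x))\,d\mu(x)\geq V_M$ for every $n$, so $\int c\,d\pi\geq V_M$; taking the infimum over $\pi\in\cplbc(\mu,\nu)$ gives $V_K\geq V_M$. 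Combining the two inequalities proves the corollary.

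The remark on obstacles: there is essentially no analytic difficulty left, since the entire content has been packaged into Theorem~\ref{thm:main}. The only points that need care are bookkeeping ones — first, the equivalence between the two descriptions of a biadapted Monge coupling (a bicausal coupling concentrated on the graph of a function versus $(\id,T)_\ast\mu$ for a biadapted $T$), which is exactly the role of Lemma~\ref{lem:causal_monge_cpl}; and second, that weak convergence of probability measures on $X\times Y$ together with continuity and boundedness of $c$ suffices for convergence of the integrals, as already used in the proof of Theorem~\ref{thm:cplbm_dense}.
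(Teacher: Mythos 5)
Your proof is correct and is exactly the argument the paper intends (the corollary is stated without proof, being an immediate consequence of Theorem~\ref{thm:main}). The two-inequality structure, the use of Lemma~\ref{lem:causal_monge_cpl}(b) to identify $\cplbc(\mu,\nu)\cap\cplbm(\mu,\nu)$ with pushforwards under biadapted maps, and the appeal to weak convergence against a bounded continuous $c$ are all precisely what is needed.
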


In particular, one can restrict to couplings supported on the graph of biadapted mappings, when calculating the adapted Wasserstein distance of two probability measures:
\begin{cor}
	Let $p \in [1,\infty)$ and equipp the Polish space $X$ with the product metric $d$ of the metrics $d_1,\dots,d_N$ on the spaces $X_1,\dots,X_N$. For $\mu,\nu \in \prob_p(X)$ satisfying Assumption \ref{asn:cont_disint} we have   \[
	\AW_p(\mu,\nu) = \inf \left\{ \int d(x,T(x))^p \mu(dx)  \:\middle|\: T \text{ biadapted, } T_\ast\mu=\nu    \right\}^{1/p}.
	\]
\end{cor}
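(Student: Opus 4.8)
The plan is to reduce the identity to the $\W_p$-denseness part of Theorem~\ref{thm:main}: the preceding corollary only handles bounded costs, while here $c(x,y)=d(x,y)^p$ is continuous but unbounded, so one extra approximation argument is needed. Concretely I would show
\[
\inf_{\pi\in\cplbc(\mu,\nu)}\int d(x,y)^p\,d\pi(x,y)=\inf\Big\{\int d(x,T(x))^p\,d\mu(x)\;\Big|\;T\text{ biadapted},\ T_\ast\mu=\nu\Big\};
\]
since the left-hand side is by definition $\AW_p(\mu,\nu)^p$, the corollary follows on taking $p$-th roots. The inequality ``$\le$'' is immediate: by Lemma~\ref{lem:causal_monge_cpl} an element of $\cplbc(\mu,\nu)\cap\cplbm(\mu,\nu)$ is precisely of the form $(\id,T)_\ast\mu$ for a biadapted $T$ with $T_\ast\mu=\nu$, so that $\int d(x,T(x))^p\,d\mu(x)=\int d(x,y)^p\,d(\id,T)_\ast\mu(x,y)$; as $\cplbc(\mu,\nu)\cap\cplbm(\mu,\nu)\subset\cplbc(\mu,\nu)$, the infimum over the smaller set dominates the infimum over $\cplbc(\mu,\nu)$.

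For the reverse inequality I would fix an arbitrary $\pi\in\cplbc(\mu,\nu)$ and invoke Theorem~\ref{thm:main}: since $\mu,\nu\in\prob_p(X)$, there is a sequence $\pi_n=(\id,T_n)_\ast\mu$ in $\cplbc(\mu,\nu)\cap\cplbm(\mu,\nu)$, with $T_n$ biadapted and $(T_n)_\ast\mu=\nu$, such that $\pi_n\to\pi$ in $\W_p(X\times Y)$. The one genuine analytic input is the continuity of $\rho\mapsto\int d(x,y)^p\,d\rho(x,y)$ on $\big(\prob_p(X\times Y),\W_p\big)$: from $d(x,y)^p\le 2^{p-1}\big(d(x,x_0)^p+d(x_0,y)^p\big)$ the integrand is dominated by a fixed multiple of the $p$-th power of the distance to a fixed base point, and since $\W_p$-convergence is equivalent to weak convergence together with convergence of $p$-th moments, integrals of continuous functions of at most $p$-th power growth pass to the limit (a Vitali-type uniform integrability argument). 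Hence $\int d(x,T_n(x))^p\,d\mu(x)=\int d(x,y)^p\,d\pi_n\to\int d(x,y)^p\,d\pi$, so the infimum over biadapted maps is $\le\int d(x,y)^p\,d\pi$; taking the infimum over $\pi\in\cplbc(\mu,\nu)$ yields ``$\ge$'', and together with the first inequality the displayed identity.

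I expect the only real obstacle to be this $\W_p$-continuity of the cost functional, which is forced by the unboundedness of $d(\cdot,\cdot)^p$; it is a standard fact but must be cited explicitly. Everything else is bookkeeping: the correspondence between bicausal bijective Monge couplings and biadapted maps (Lemma~\ref{lem:causal_monge_cpl}), the inclusion $\cplbc\cap\cplbm\subset\cplbc$, and the $\W_p$-denseness supplied by Theorem~\ref{thm:main}. A small detail worth confirming is that the product metric on $X$ used for $\W_p(\mu,\nu)$ combines with the product metric on $X\times Y$ exactly as in the hypotheses of Theorem~\ref{thm:main}, which it does by the paper's metric conventions.
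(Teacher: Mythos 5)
Your proof is correct and is exactly the argument the paper leaves implicit, since the corollary is stated without proof as an immediate consequence of Theorem~\ref{thm:main}: you rightly observe that because $d(x,y)^p$ is unbounded one must use the $\W_p$-denseness assertion of that theorem together with the $\W_p$-continuity of $\rho\mapsto\int d(x,y)^p\,d\rho$ on $\prob_p(X\times X)$, and combine it with the identification of biadapted Monge couplings from Lemma~\ref{lem:causal_monge_cpl}. The only nit is notational: the $\W_p$ on the left-hand side of the corollary is, as the surrounding prose makes clear, the adapted distance $\AW_p$, which is exactly how you read it.
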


%%%%%%%%%%%%%%%%%%%%%%%%%%%%%%%%%%%%%%%%%%%%%%
%% Single Appendix:                         %%
%%%%%%%%%%%%%%%%%%%%%%%%%%%%%%%%%%%%%%%%%%%%%%
\appendix

	\section{Appendix}%% if no title is needed, leave empty \section*{}.

	\subsection{Preliminaries from descriptive set theory}\label{APrelimSec}
	In this section we briefly state a few results from descriptive set theory that we used often throughout this paper, for more details the reader is referred to the monograph \cite{Ke95}.
	
	A measurable space is a tuple $(X,\A)$, where $X$ is a set and $\A$ is $\sigma$-algebra on $X$. A measurable space $(X,\A)$ is called standard Borel, if there exists a Polish topology $\T$ on $X$ such that the $\sigma$-algebra $\A$ is the Borel $\sigma$-algebra generated by $\T$. We write $X$ instead of $(X,\A)$, if the $\sigma$-algebra is clear from the context.
	
	%\begin{theorem}[{\cite[Theorem~14.12]{Ke95}}]\label{thm:BorelBij}
	Let $X$, $Y$ be standard Borel spaces and $f: X \to Y$ a mapping. Then $f$ is Borel if and only if $\graph(f)$ is Borel (see {\cite[Theorem~14.12]{Ke95}}).
	In particular, if $f : X \to Y$ is Borel and  bijective, then $f$ is a Borel isomorphism (i.e.\ $f^{-1}$ is measurable). 
	%\end{theorem}

	%\begin{theorem}[{\cite[Theorem~15.6]{Ke95}}, Borel isomorphism Theorem]\label{thm:borel_iso}
	Let $X, Y$ be standard Borel spaces. Then $X$ and $Y$ are Borel isomorphic if and only if they have the same cardinality (``Borel isomorphism Theorem'', see {\cite[Theorem~15.6]{Ke95}}).
	%\end{theorem}
	All uncountable standard Borel spaces have the cardinality of the continuum (see \cite[Theorem~13.6]{Ke95}) and are therefore Borel isomorphic.
	
	%\begin{theorem}[{\cite[Theorem~17.41]{Ke95}}, Isomorphism theorem for measures]\label{thm:isomThmMeasures}
	Recall that a probability measure $\mu$ is called continuous if it does not give mass to singletons, i.e. $\mu(\{x\})=0$ for all $x \in X$.	Let $X$ be a standard Borel space and $\mu \in \prob(X)$ be continuous. Then there is a Borel isomorphism $f : X \to [0,1]$ s.t.\ $f_\ast\mu=\lambda$, where $\lambda$ denotes the Lebesgue measure on $[0,1]$ (``Isomorphism theorem for measures'', see \cite[Theorem~17.41]{Ke95}). In particular, every standard Borel space that supports a continuous probability measure has the cardinality of the continuum.
	%\end{theorem}

	%\begin{theorem}[{\cite[Theorem~14.12]{Ke95}}]\label{thm:BorelBij}
	%	Let $X$, $Y$ be standard Borel spaces and $f: X \to Y$ a mapping. Then $f$ is Borel if and only if $\graph(f)$ is Borel. 
	%	
	%	In particular, if $f : X \to Y$ is Borel and  bijective, then $f$ is a Borel isomorphism (i.e.\ $f^{-1}$ is measurable).
	%\end{theorem}
	%
	%
	%\begin{theorem}[{\cite[Theorem~15.6]{Ke95}}, Borel isomorphism Theorem]\label{thm:borel_iso}
	%	Let $X, Y$ be standard Borel spaces. Then $X$ and $Y$ are Borel isomorphic if and only if they have the same cardinality.
	%\end{theorem}
	%
	%\begin{theorem}[{\cite[Theorem~17.41]{Ke95}}, Isomorphism theorem for measures]\label{thm:isomThmMeasures}
	%	Let $X$ be a standard Borel space and $\mu \in \prob_c(X)$. Then there is a Borel isomorphism $f : X \to [0,1]$ s.t.\ $f_\ast\mu=\lambda$, where $\lambda$ denotes the Lebesgue measure on $[0,1]$.
	%\end{theorem}

	%\begin{theorem}[{\cite[Theorem~13.11]{Ke95}}]\label{thm:borel_cont}
	%	Let $(X,\T)$ be a Polish space, $Y$ be a second countable space and $f : X \to Y$ be Borel measurable. Then there exists a Polish topology $\T_f \supset \T$ such that $f$ is $\T_f$-continuous and $\B(\T_f)=\B(\T)$.
	%\end{theorem}

	\subsection{Omitted proofs}\label{subsec:proofs} For the sake of completeness we state the proofs of a few technical lemmas that we have omitted as they are not crucial for the understanding of the main results.
	
	Recall that we assumed all standard Borel spaces to be uncountable.
	\begin{lemma}\label{lem:uncountableNullset}
		Let $X$ be a standard Borel space and $\mu \in \prob(X)$. Then there is an uncountable Borel set $A \subset X$ satisfying $\mu(A)=0$. 
	\end{lemma}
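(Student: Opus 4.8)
\textbf{Proof plan for Lemma~\ref{lem:uncountableNullset}.}
The plan is to split into two cases according to whether $\mu$ has an atom. First, if $\mu$ has an atom $x_0 \in X$, then $X \setminus \{x_0\}$ is a Borel set of full measure; since $X$ is uncountable (and singletons are Borel in a standard Borel space), $X \setminus \{x_0\}$ is still uncountable, and we may take $A := X \setminus \{x_0\}$, which even satisfies $\mu(A) = 1$. Wait — we need $\mu(A) = 0$, so this case needs a different argument; see below.

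Let me restart the plan more carefully. The clean approach is to reduce to a model space via the isomorphism theorems. By the Borel isomorphism theorem, $X$ is Borel isomorphic to one of the standard models; since $X$ is uncountable it is Borel isomorphic to $[0,1]$ (or to $\{0,1\}^\N$), via some Borel isomorphism $\varphi : X \to [0,1]$. It suffices to produce an uncountable Borel set $B \subseteq [0,1]$ with $\varphi_\ast\mu(B) = 0$ and then set $A := \varphi^{-1}(B)$, which is uncountable (as $\varphi$ is a bijection) and Borel with $\mu(A) = \varphi_\ast\mu(B) = 0$. So we are reduced to the following: given any $\rho \in \prob([0,1])$, find an uncountable Borel $B \subseteq [0,1]$ with $\rho(B) = 0$.

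To produce such a $B$, decompose $\rho = \rho_{\mathrm{at}} + \rho_{\mathrm{cont}}$ into its atomic and continuous parts. The set $D$ of atoms of $\rho$ is countable. If $\rho_{\mathrm{cont}} = 0$, then $[0,1] \setminus D$ is an uncountable Borel set with $\rho([0,1]\setminus D) = 0$ and we are done. If $\rho_{\mathrm{cont}} \neq 0$, after normalizing we may as well assume $\rho$ itself is continuous and atomless; the standard construction of a fat-complement Cantor set works: build a Cantor-type set $C \subseteq [0,1]$ by removing at stage $n$ an open middle portion from each of the $2^{n-1}$ current intervals, choosing the removed portions so that $\rho$ of the union of all removed intervals is $1$ — this is possible because $\rho$ is continuous, so one can always find, inside any interval of positive $\rho$-measure, a subinterval carrying as much $\rho$-mass as desired short of all of it while still leaving two nondegenerate closed pieces. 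The resulting $C$ is a nonempty compact perfect set, hence uncountable, and $\rho(C) = 0$. Take $B := C$.

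The main obstacle is the explicit fat-complement Cantor construction in the continuous case: one must verify that at each stage one can excise enough $\rho$-mass while keeping the two residual subintervals nondegenerate, and that the nested intersection $C$ is perfect (no isolated points) and hence uncountable. This hinges only on the elementary fact that a continuous measure on an interval assigns mass varying continuously with the endpoints, so the middle portion to be removed can be chosen of arbitrary $\rho$-measure strictly below the whole. Everything else — the reduction via Borel isomorphism, the atomic/continuous split, and the measurability of $C$ — is routine.
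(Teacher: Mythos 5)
Your overall strategy — split off the atomic part and handle the continuous remainder by exhibiting a Cantor-type null set — matches the paper's. Where you differ is in how that null set is produced: the paper first restricts $\mu$ to the co-countable set $X'=\{x:\mu(\{x\})=0\}$ and, if $\mu(X')>0$, invokes the isomorphism theorem for measures (\cite[Theorem~17.41]{Ke95}) to push the normalized restriction to Lebesgue measure on $[0,1]$, then pulls back the standard Cantor set; you instead propose a hands-on ``fat-complement'' Cantor construction adapted directly to $\rho$. Your route is more elementary but leaves two things unfinished. First, in the case $\rho_{\mathrm{cont}}\neq 0$ you say ``we may as well assume $\rho$ is continuous,'' but the set $C$ you then build is only $\rho_{\mathrm{cont}}$-null and may still contain atoms of $\rho$; you should pass to $C\setminus D$, which is still uncountable and Borel. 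Second, the requirement that the residual $\rho$-mass tend to $0$ and that the interval lengths tend to $0$ (so the intersection is perfect) are genuinely in tension when $\rho$ is lopsided inside a subinterval, so the construction needs an explicit bookkeeping scheme; alternatively, note as in Proposition~\ref{prop:perfect_nullset} that either $\supp\rho=[0,1]$ and $F_\rho^{-1}(\text{Cantor set})$ works, or some nondegenerate closed interval is already $\rho$-null. As written the plan is sound in spirit but not yet a complete proof.
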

	\begin{proof}
		Let $X' := \{ x \in X \big| \mu(\{x\}) =0 \}$. Since $X \setminus X'$ is countable, $X'$ is uncountable Borel. If $\mu(X')=0$, take $A:=X'$. Otherwise let $\nu:= \frac{1}{\mu(X')}\mu|_{X'} \in \prob(X')$. By the isomorphism  theorem for measures (c.f.\ Section~\ref{APrelimSec}) there is a Borel isomorphism $f : X' \to [0,1]$ s.t.\ $f_\ast\nu=\lambda$. Denote $C \subset [0,1]$ the usual Cantor set and let $A:= f^{-1}(C)$. 	
	\end{proof}
	%
	%\begin{proof}[Proof of Lemma \ref{lem:bijMonge}] If $\pi \in \cplbm(\mu,\nu)$, it clear that $\pi \in \cplm(\mu,\nu)$ and $e_\ast\pi\in\cplm(\mu,\nu)$. Conversely, let $\pi \in \cplm(\mu,\nu)$ s.t.\ $e_\ast\pi \in \cplm(\nu,\mu)$, i.e.\ there exist Borel  mappings $S : X \to Y$ and $T : Y \to X$ such that $\pi=(\id,S)_\ast\mu$ and $e_\ast\pi=(\id,T)_\ast\nu$. By Theorem~\ref{thm:BorelBij} the sets $\graph(S)$ and $\graph^{-1}(T):=\{(T(y),y)|y \in Y\}$ are both Borel. Clearly, $\pi(\graph(S))=1$ and  $\pi(\graph^{-1}(T))=1$. Hence,  $R:= \graph(S) \cap \graph^{-1}(T)$ is Borel and $\pi(R)=1$. Moreover, $R$ is the graph of a bijection between $\pr_X(R)$ and $\pr_Y(R)$. It is easy to see that $\pr_X(R)= (\id,S)^{-1}(\graph^{-1}(T))$ and $\pr_Y(R)=(\id,T)^{-1}(\graph^{-1}(S))$, so $\pr_X(R)$ and $\pr_Y(R)$ are both Borel. Hence, $R : \pr_X(R) \to \pr_Y(R)$ is a Borel isomorphism by Theorem~\ref{thm:BorelBij}.
	%	
	%By Lemma \ref{lem:uncountableNullset} there are $A \subset \pr_X(R)$ and $B \subset \pr_Y(R)$ uncountable Borel satisfying $\mu(A)=0$ and $\nu(B)=0$. Let $C:= (X \setminus \pr_X(R)) \cup A \cup R^{-1}(B)$ and $D:= (Y \setminus \pr_Y(R)) \cup B \cup R(A)$. Then $\mu(C)=\nu(D)=0$ and  $R|_{X\setminus A} : X\setminus A \to Y \setminus B$ is a bijection.  By Theorem~\ref{thm:borel_iso} there exists a Borel isomorphism  $G : A \to B$. Then the mapping
	%	\[
	%	F : X \to Y : x \mapsto \begin{cases}
		%		R(x) & x \in X \setminus A \\
		%		G(x) & x \in A
		%	\end{cases}
	%	\]
	%	is a Borel isomorphism satisfying $\pi=(\id,F)_\ast\mu$. 
	%\end{proof}
	
	\begin{proof}[Proof of Lemma \ref{lem:indCharOfBiadapted}]
		It is clear that $T$ is adapted and it is easy to check that it is injective and surjective. Therefore,  $F:=T^{-1}$ exists and it suffices to show that $F$ is adapted. To that end, we denote
		\[
		F(y_{1:N})=(F_1(y_{1:N}),F_{2:N}(y_{1:N})).
		\]
		We have
		\begin{align}\label{eq:prf:biad_induc}
			y_{1:N}=T(F(y_{1:N})) = (T_1(F_1(y_{1:N})),S^{F_1(y_{1:N})}(F_{2:N}(y_{1:N})))
		\end{align}
		and therefore $y_1=T_1(F_1(y_{1:N}))$. As $T_1$ is bijective, we can apply $T_1^{-1}$ to get $F_1(y_{1:N})=T_1^{-1}(y_1)$, i.e.  $F_1$ depends only on $y_1$. Moreover, \eqref{eq:prf:biad_induc} implies $y_{2:N}=S^{F_1(y_1)}(F_{2:N}(y_{1:N}))$. Since $S^{F_1(y_1)}$ is bijective, this implies
		\[
		(S^{F_1(y_1)})^{-1}(y_{2:N}) = F_{2:N}(y_{1:N}).
		\]
		Since $S^{F_1(y_1)}$ is assumed to be biadapted, there exists, for every $t$, $F_t : Y_{1:t} \to X_t$ Borel s.t.\
		\[
		F_{2:N}(y_{1:N})=(F_2(y_{1:2}),\dots,F_N(y_{1:N})).\qedhere
		\]
	\end{proof}
	
	\begin{lemma}\label{lem:causal_monge_cpl} For $\mu \in \prob(X)$ and $\nu \in \prob(Y)$ we have
		\begin{enumerate}[(a)]
			\item $\cplc(\mu,\nu) \cap \cplm(\mu,\nu) = \{ (id,T)_\ast \mu \big| T \text{ adapted }, T_\ast\mu=\nu  \}$
			\item $\cplbc(\mu,\nu) \cap \cplbm(\mu,\nu) = \{ (id,T)_\ast \mu \big| T \text{ biadapted }, T_\ast\mu=\nu  \}$
		\end{enumerate}
		
	\end{lemma}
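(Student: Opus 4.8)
The plan is to prove each of the two set equalities by establishing both inclusions, disposing of the ``$\supseteq$'' directions first. For these I would take an adapted $T$ with $T_\ast\mu=\nu$, put $\pi:=(\id,T)_\ast\mu\in\cplm(\mu,\nu)$, note that the $\mu$-disintegration of $\pi$ is $\pi^x=\delta_{T(x)}$, and observe that for $t<N$ and $B=B_{1:t}\times Y_{t+1:N}\in\F_t^Y$ we have $\pi^x(B)=\ind{B_{1:t}}((T_1,\dots,T_t)(x))$, which depends on $x$ only through $x_{1:t}$; hence $\pi\in\cplc(\mu,\nu)$, so $\pi\in\cplc(\mu,\nu)\cap\cplm(\mu,\nu)$, which is ``$\supseteq$'' in~(a). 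For~(b) one additionally has that $T$ is a bijection, so $\pi\in\cplbm(\mu,\nu)$ and $e_\ast\pi=(\id,T^{-1})_\ast\nu$; applying the same computation to the adapted map $T^{-1}$ gives $e_\ast\pi\in\cplc(\nu,\mu)$, whence $\pi\in\cplbc(\mu,\nu)\cap\cplbm(\mu,\nu)$.

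For ``$\subseteq$'' in~(a) I would start from $\pi=(\id,T)_\ast\mu\in\cplc(\mu,\nu)\cap\cplm(\mu,\nu)$ and fix $t<N$. Causality, applied to the version $\pi^x=\delta_{T(x)}$, says precisely that $x\mapsto\ind{B_{1:t}}((T_1,\dots,T_t)(x))$ agrees $\mu$-a.s.\ with an $\F_t^X$-measurable function for every Borel $B_{1:t}\subset Y_{1:t}$; equivalently $(T_1,\dots,T_t)\colon X\to Y_{1:t}$ is measurable with respect to the $\mu$-completion of $\F_t^X$. Since $Y_{1:t}$ is standard Borel, the routine fact that such a map coincides $\mu$-a.s.\ with an honestly $\F_t^X$-measurable one (prove it for real simple functions and pass to the limit, coordinatewise) yields a Borel $g_t\colon X_{1:t}\to Y_{1:t}$ with $(T_1,\dots,T_t)=g_t\circ\pr_{1:t}$ $\mu$-a.s. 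Writing $g_t=(g_t^1,\dots,g_t^t)$ and setting $\bar T_t:=g_t^t$ for $t<N$, $\bar T_N:=T_N$, the map $\bar T$ is adapted and $\bar T=T$ $\mu$-a.s., so $(\id,\bar T)_\ast\mu=\pi$ and $\bar T_\ast\mu=\nu$.

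For ``$\subseteq$'' in~(b) I would induct on $N$, and --- exactly as with Theorems~\ref{thm:cpl_repr} and~\ref{thm:cpl_repr_timestep2} --- prove the parameterized statement (an auxiliary standard Borel $Z$ and kernels $\mu,\nu,\pi$ with $\pi^z\in\cplbc(\mu^z,\nu^z)\cap\cplbm(\mu^z,\nu^z)$), so that the selection needed in the induction step comes for free. The case $N=1$ is immediate, since bicausality is vacuous and biadapted means bijective. For the step I would decompose $\pi=\pi_1\otimes\pi^{x_1,y_1}$. By the argument of~(a) applied at $t=1$ to $\pi$ and to $e_\ast\pi$, the first coordinate of the underlying bijection factors through $x_1$ and that of its inverse through $y_1$ up to null sets, and these two factorizations are mutually inverse $\mu_1$- resp.\ $\nu_1$-a.s.; hence $\pi_1=(\id,g_1)_\ast\mu_1$ for a Borel $g_1$ that is $\mu_1$-a.s.\ injective with $(g_1)_\ast\mu_1=\nu_1$. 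A standard surgery on null sets --- using the Borel isomorphism theorem, and Lemma~\ref{lem:uncountableNullset} to repair a possible cardinality mismatch of the null complements --- then produces a Borel isomorphism $\hat T_1\colon X_1\to Y_1$ with $\hat T_1=g_1$ $\mu_1$-a.s. By Corollary~\ref{cor:inductiveCharBicausal} together with a Fubini argument, for $\mu_1$-a.e.\ $x_1$ the kernel $\pi^{x_1,\hat T_1(x_1)}$ lies in $\cplbc(\mu^{x_1},\nu^{\hat T_1(x_1)})\cap\cplbm(\mu^{x_1},\nu^{\hat T_1(x_1)})$; after the usual redefinition on a $\mu_1$-null set of $x_1$ so that this holds for all $x_1$, the induction hypothesis yields a Borel $\hat S\colon X_1\times X_{2:N}\to Y_{2:N}$ with $\hat S(x_1,\cdot)$ biadapted and $(\id,\hat S(x_1,\cdot))_\ast\mu^{x_1}=\pi^{x_1,\hat T_1(x_1)}$ for $\mu_1$-a.e.\ $x_1$. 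Finally I would set $T(x_{1:N}):=(\hat T_1(x_1),\hat S(x_1,x_{2:N}))$; then $T$ is biadapted by Lemma~\ref{lem:indCharOfBiadapted}, and disintegrating against $x_1$ gives $(\id,T)_\ast\mu=\pi$ and $T_\ast\mu=\nu$.

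The ``$\supseteq$'' directions and part~(a) are essentially bookkeeping; the real work sits in the induction step of~(b). The main obstacle there is not the gluing --- Lemma~\ref{lem:indCharOfBiadapted} disposes of that --- but the repeated passage from almost-everywhere information (coordinates factoring through the right pasts, fiber maps being injective and measure-preserving) to honest Borel isomorphisms defined everywhere, carried out uniformly enough to stay jointly Borel in the parameter. This is precisely what forces the parameterized induction and the null-set surgery, and it is where the descriptive-set-theoretic input (Lemma~\ref{lem:uncountableNullset}, the Borel isomorphism theorem, the isomorphism theorem for measures) is actually used.
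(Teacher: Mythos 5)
Your proof is correct in outline, but it is vastly more complicated than the paper's, and the extra machinery in part~(b) stems from a single avoidable choice in part~(a). You treat causality as an almost-sure statement: $x\mapsto\delta_{T(x)}(B)$ agrees $\mu$-a.s.\ with an $\F_t^X$-measurable function, so you end up with a map that is only a.s.\ adapted and then have to repair it. The paper instead reads causality as an \emph{exact} measurability condition for the disintegration $\pi^x=\delta_{T(x)}$: the map $\phi_B(x)=\delta_{T(x)}(B)$ is $\F_t^X$-measurable on the nose. Since $\phi_B$ takes only the values $0$ and $1$, this gives $T^{-1}(B)=\phi_B^{-1}(\{1\})\in\F_t^X$ exactly, hence $T$ is $\F_t^X$-$\F_t^Y$-measurable, and the Doob--Dynkin lemma yields an exact (not a.s.) factorization $\pr_{Y_{1:t}}\circ T=\widetilde T_t\circ\pr_{X_{1:t}}$. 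The upshot is that the very map $T$ you started with is already adapted --- no modification needed.

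With that, part~(b) collapses. If $\pi\in\cplbc(\mu,\nu)\cap\cplbm(\mu,\nu)$, then $\pi=(\id,T)_\ast\mu$ for a bijection $T$ with $T_\ast\mu=\nu$; the argument of~(a) applied to $\pi$ shows $T$ is adapted, and applied to $e_\ast\pi=(\id,T^{-1})_\ast\nu\in\cplc(\nu,\mu)\cap\cplm(\nu,\mu)$ it shows $T^{-1}$ is adapted. Hence $T$ is biadapted, with no induction, no parameterization over an auxiliary $Z$, no Fubini selection, and no null-set surgery via Lemma~\ref{lem:uncountableNullset} or the Borel isomorphism theorem. Your entire induction step is thus re-proving, in a much harder way, what the exact Doob--Dynkin factorization gives for free. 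This is worth internalizing, since it is the same phenomenon that makes the present lemma lightweight while Theorems~\ref{thm:cpl_repr} and~\ref{thm:cpl_repr_timestep2} genuinely require the heavier descriptive-set-theoretic toolkit: there one must \emph{construct} bijections out of couplings that are not Monge to begin with, whereas here the bijection is already in hand and one only needs to confirm that it is adapted.
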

	
	\begin{proof}
		(a) Let $\pi \in \cplc(\mu,\nu) \cap \cplm(\mu,\nu)$, write $d\pi=d\delta_{T(x)}d\mu(x)$ and fix $t \le N$. Then for all $B \in \F_t^Y$, the mapping
		\[
		\phi_B : x \mapsto \delta_{T(x)}(B)
		\]
		is $\F_t^X$ measurable. Since $T^{-1}(B) = \phi_B^{-1}(\{1\})$, this implies that $T$ is $\F_t^X$-$\F_t^Y$-measurable. Hence, $\pr_{Y_{1:t}} \circ T$ is $\F_t^X$-measurable. By the Doob–Dynkin lemma this implies that there is a Borel function $\widetilde T_t : X_{1:t} \to Y_{1:t}$ s.t.\ $\pr_{Y_{1:t}} \circ T = \widetilde{T}_t \circ \pr_{X_{1:t}}$. Set $T_t := \pr_{Y_t} \circ \widetilde{T}_t$.
		
		Conversely, let $d\pi=d\delta_{T(x)}d\mu(x)$, where $T$ is adapted and satisfies $T_\ast \mu=\nu$. For $t<N$ and $B \in \F_t^Y$ we need to show that $\phi_B$ is $\F_t^X$-measurable. Since $\phi_B$ only takes the values 0 and 1, it suffices to show that $\phi_B^{-1}(\{1\}) \in \F_t^X$. Indeed, $B = \pr_{Y_{1:t}}^{-1}(B')$ for some Borel $B' \subset Y_{1:t}$ and
		$
		\phi_B^{-1}(\{1\}) = T^{-1}(B) =  (\pr_{Y_{1:t}} \circ T)^{-1}(B') \in \F_t^X,
		$
		since $\pr_{Y_{1:t}} \circ T$ is $\F_t^X$-measurable.
		
		(b) is an immediate consequence of (a).
	\end{proof}

	\subsection{An isomorphism theorem for kernels}
	The isomorphism theorem for measures  states that for any continuous measure on a standard Borel space $X$, there exists a bijection $f: X \to [0,1]$ s.t.\ $f_\ast\mu=\lambda$. The main goal of this section is to prove the following  parameterized version: 
	
	\begin{theorem}\label{thm:borel_iso_parametr}
		Let $X$ and $Y$ be standard Borel spaces and $\pi$ a kernel from $X$ to $Y$ s.t.\ $\pi^x$ is a continuous probability measure for all $x \in X$. Then there exists a Borel function 
		\[
		G : X \times Y \to [0,1]
		\] 
		such that for all $x \in X$ the mapping $G^x = G(x, \cdot ) : Y \to [0,1]$ is a Borel ismorphism satisfying $G^x_\ast\pi^x = \lambda$, where $\lambda$ denotes the Lebesgue measure on $[0,1]$.
	\end{theorem}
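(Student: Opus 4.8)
The statement is the isomorphism theorem for measures (recalled in Section~\ref{APrelimSec}) made uniform in the parameter, and the plan is to carry out the classical proof while tracking Borel dependence on $x$. First I would reduce to the case $Y=[0,1]$: fixing a Borel isomorphism $\phi:Y\to[0,1]$ (which exists since $Y$ is uncountable standard Borel), the map $x\mapsto\phi_\ast\pi^x$ is again a Borel kernel with continuous fibres, and a solution $\widetilde G:X\times[0,1]\to[0,1]$ for this kernel yields $G(x,y):=\widetilde G(x,\phi(y))$ for $\pi$. On $[0,1]$ I would use the cumulative distribution functions $F^x(t):=\pi^x([0,t])$. The map $(x,t)\mapsto F^x(t)$ is Borel in $x$ (kernel measurability) and continuous in $t$ (as $\pi^x$ is nonatomic), hence jointly Borel, and a short computation with the intermediate value theorem---using $(F^x)^{-1}([0,s])=[0,t_s]$ for $t_s:=\sup\{t:F^x(t)\le s\}$ together with $F^x(t_s)=s$---shows $F^x_\ast\pi^x=\lambda$ for every $x$.

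The one defect of $F^x$ as a candidate for $G^x$ is that it need not be injective, so next I would cut down its domain. Put $N^x:=\{t\in[0,1]:\exists\,s<t,\ F^x(s)=F^x(t)\}$, which coincides with $\bigcup_{q\in\Q\cap[0,1]}\{t:q<t,\ F^x(q)=F^x(t)\}$; thus $\{(x,t):t\in N^x\}$ is Borel, each fibre $N^x$ is a countable union of half-open intervals on which $F^x$ is constant, and hence $\pi^x(N^x)=0$. With $G^x:=[0,1]\setminus N^x$ one checks that $F^x$ is injective on $G^x$ and that $t_u:=\inf\{t:F^x(t)\ge u\}$ lies in $G^x$ with $F^x(t_u)=u$ for every $u\in[0,1]$; hence $F^x|_{G^x}:G^x\to[0,1]$ is a Borel bijection---thus a Borel isomorphism by the fact recalled in Section~\ref{APrelimSec}---and, since $\pi^x(G^x)=1$, it pushes $\pi^x$ to $\lambda$.

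It remains to extend $F^x|_{G^x}$ to a Borel automorphism of $[0,1]$, which first requires freeing up room in the target. I would fix the Cantor set $C\subset[0,1]$ and set $\widetilde N^x:=\{t\in G^x:F^x(t)\in C\}$, a Borel-in-$(x,t)$ set that is uncountable and satisfies $\pi^x(\widetilde N^x)=\lambda(C)=0$. Writing $D^x:=G^x\setminus\widetilde N^x$ and $R^x:=[0,1]\setminus D^x=N^x\cup\widetilde N^x$, the restriction $F^x|_{D^x}$ is a Borel isomorphism of $D^x$ onto $[0,1]\setminus C$, while $R^x$ is a $\pi^x$-null Borel set all of whose fibres are uncountable, hence of cardinality continuum. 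Invoking a parameterized version of the Borel isomorphism theorem---a Borel subset of $X\times[0,1]$ with uncountable vertical sections carries a Borel family of Borel isomorphisms of those sections onto the fixed uncountable standard Borel space $C$---I would select Borel isomorphisms $k^x:R^x\to C$ depending Borel-measurably on $x$, and define $G^x$ to be $F^x|_{D^x}$ on $D^x$ and $k^x$ on $R^x$. Then $G:X\times[0,1]\to[0,1]$ is jointly Borel, each $G^x$ is a Borel bijection of $[0,1]$, hence a Borel isomorphism, and since $G^x$ agrees with $F^x$ off the $\pi^x$-null set $R^x$ it satisfies $G^x_\ast\pi^x=\lambda$. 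Undoing the reduction of the first paragraph then proves the theorem.

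The analytic content---that the CDF transports $\pi^x$ to $\lambda$, and the elementary surgery making it bijective---is routine; the real work, and the reason this is proved separately from its one-measure special case, is the descriptive set theory required to keep each step Borel in $x$. The crucial such ingredient is the parameterized Borel isomorphism theorem for the leftover family $(R^x)_{x\in X}$ (obtained by running the usual back-and-forth / Borel Schröder--Bernstein construction measurably in the parameter, cf.\ \cite{Ke95}); by comparison the joint measurability of $(x,t)\mapsto F^x(t)$ and the Borel descriptions of $N^x$ and $\widetilde N^x$ above are easy checks that I would include for completeness.
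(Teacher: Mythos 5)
Your strategy mirrors the paper's closely (reduce to $Y=[0,1]$, use the cdf $F^x$, restrict away the non-injectivity locus, use a Cantor-type null set to create room in the target, and glue in a Borel parametrization of the leftover), but there is a genuine gap at the crucial step. You invoke ``a parameterized version of the Borel isomorphism theorem --- a Borel subset of $X\times[0,1]$ with uncountable vertical sections carries a Borel family of Borel isomorphisms of those sections onto $C$.'' This is \emph{not} true unconditionally, and that is precisely the content of Mauldin's theorem (Theorem~\ref{thm:Borel_param}): a Borel set $B$ with uncountable sections admits a Borel parametrization \emph{if and only if} it contains a Borel set with compact perfect sections, equivalently iff it supports a Borel kernel of continuous measures concentrated on its sections. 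There exist Borel sets with uncountable sections admitting no Borel parametrization, so the hypothesis is not for free. The same issue infects your closing remark that one can get it ``by running the usual back-and-forth / Borel Schr\"oder--Bernstein construction measurably in the parameter''; without a uniform witness of uncountability this fails.

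Your construction can be repaired, but it requires work you did not do: one must check that $R=\{(x,t):t\in R^x\}$ actually satisfies Mauldin's criterion. This does hold for your $R^x=N^x\cup\widetilde N^x$ --- e.g.\ push the Cantor measure on $C$ forward under $(x,u)\mapsto (F^x|_{G^x})^{-1}(u)$, which is jointly Borel by a Lusin--Souslin argument since $(x,t)\mapsto(x,F^x(t))$ is a Borel injection on the Borel set $\{(x,t):t\in G^x\}$; this produces a Borel kernel of continuous measures concentrated on $\widetilde N^x\subset R^x$, verifying criterion~(iii). The paper sidesteps this verification entirely by \emph{constructing} the compact-perfect witness up front: Proposition~\ref{prop:perfect_nullset} produces a Borel set $A$ with compact perfect $\pi^x$-null sections, which is then unioned into the leftover set so that Mauldin's criterion~(ii) holds by inspection, with the corresponding set $B$ in the target playing the same role. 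That is the essential extra ingredient the paper supplies; your write-up treats it as automatic, which it is not.
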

	%If $\pi$ is a kernel from $X$ to $Y$ then there exists a Borel isomorphism $f : X \times Y \to X \times [0,1]$ such that $f$ lets the $x$-component fixed and for all $x \in X$ the mapping  $f(x,\cdot)$ pushes $\pi^x$ to $\lambda$. 
	
	%This Theorem~will be crucial for the memorability of many objects defined in this thesis. Since, the measurability issues are not important for the understanding of main points this theses, we postponed it to the appendix.
	
	Using the axiom of choice, one could choose for all $x \in X$ a Borel isomorphism $f_x$ that pushes $\pi^x$ to $\lambda$ and define $f(x,y):=(x,f_x(y))$. However, there is no reason why this function $f$ is measurable. Therefore, we repeat the construction  in the proof of the isomorphism theorem for measures given in {\cite[Theorem~17.41]{Ke95}} in a way that is uniform for all $x$. A key role for this plays Theorem~2.4 from \cite{Ma79} because it ensures the existence of Borel isomorphisms that let the $x$-coordinate fixed under suitable conditions. We will first clarify what this means exactly:
	
	For a set $A \subset X \times Y$ and $x \in X$ we define the $x$-section of $A$ as $A_x :=\{y \in Y|(x,y) \in A\}$. 
	
	\begin{definition}
		Let $X, Y, Z$ be standard Borel spaces and let $B \subset X \times Y$ be Borel. A \emph{Borel parametrization} of $B$ is a Borel isomorphism $f : X \times Z \to B$ satisfying  $f(\{x\} \times Z ) = \{x\} \times B_x$ for all $x \in X$.
	\end{definition}
	
	Of course, a necessary condition for the existence of a Borel parametrization is that all $x$-sections of $B$ have the same cardinality. We are interested in the case where $X$ is uncountable and all $x$-sections of $B$ are uncountable. The following theorem gives a criterion for the existence of Borel parametrizations:
	
	\begin{theorem}[{\cite[Theorem~2.4]{Ma79}}]\label{thm:Borel_param}
		Let $X$ and $Y$ be uncountable standard Borel spaces and let $B \subset X \times Y$ be a Borel set with uncountable $x$-sections. Then the following are equivalent
		\begin{enumerate}[(i)]
			\item $B$ has a Borel parametrization.
			\item There is a Borel set $M \subset B$ such that for all $x \in X$ the set $M_x$ is compact and perfect.
			\item There exists a kernel $\mu$  from $X$ to $Y$ such that for all $x \in X$ the measure $\mu^x$ is continuous and satisfies $\mu^x(B_x)=1$.
		\end{enumerate}
	\end{theorem}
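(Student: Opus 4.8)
The plan is to prove the cycle of implications $(i)\Rightarrow(iii)\Rightarrow(ii)\Rightarrow(i)$, where after replacing $Y$ by $[0,1]$ via the Borel isomorphism theorem the first implication is immediate: given a Borel parametrization $f:X\times[0,1]\to B$ and Lebesgue measure $\rho$ on $[0,1]$, the kernel $\mu^x:=(\pr_Y\circ f(x,\cdot))_\ast\rho$ is Borel in $x$, and each $\mu^x$ is a continuous probability on $Y$ with $\mu^x(B_x)=1$ because $f(x,\cdot)$ is a bijection of $\{x\}\times[0,1]$ onto $\{x\}\times B_x$. The other two implications carry the real content: $(iii)\Rightarrow(ii)$ is a uniform-in-$x$ version of the classical fact that a continuous Borel probability measure charges a Cantor set, and $(ii)\Rightarrow(i)$ is a fibrewise version of the Borel isomorphism theorem.

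For $(iii)\Rightarrow(ii)$, start from a kernel $\mu$ with $\mu^x$ continuous and $\mu^x(B_x)=1$ and use parametrized inner regularity to select, Borel-measurably in $x$, a compact set $K_x\subset B_x$ with $\mu^x(K_x)\ge 1/2$. Passing to the perfect kernel of $K_x$ removes a countable, hence $\mu^x$-null, set, so the resulting fibre $M_x$ still carries positive $\mu^x$-mass, is compact and perfect, and depends Borel-measurably on $x$; then $M:=\{(x,y):y\in M_x\}$ is a Borel subset of $B$ with compact perfect fibres, which is exactly $(ii)$. The measure hypothesis is used here precisely to obtain a \emph{compact} subset of $B_x$ varying measurably in $x$, which is what lets the next step build Cantor sets inside the fibres.

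For $(ii)\Rightarrow(i)$, given Borel $M\subset B$ with all fibres compact and perfect, construct a Cantor scheme fibrewise: a family $(U_s)$, indexed by finite $0$--$1$ strings, where $U_s=U_s(x)$ is a basic open subset of $[0,1]$ with $\overline{U_s}\cap M_x\ne\emptyset$, $\overline{U_{si}}\subset U_s$, $\overline{U_{s0}}\cap\overline{U_{s1}}=\emptyset$, and $\diam(U_s)\le 2^{-|s|}$; such splittings exist because $M_x$ is perfect, and they can be chosen Borel in $x$ by drawing the $U_s$ from a fixed countable basis. Then $g:X\times 2^{\N}\to B$ sending $(x,\omega)$ to the unique point of $\bigcap_n(\overline{U_{\omega|_n}}\cap M_x)$ is a Borel parametrization of a Borel set $M'\subset M\subset B$ whose fibres are homeomorphic copies of the Cantor set. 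To upgrade $M'$ to all of $B$, run a fibrewise Cantor--Bernstein argument: the inclusion is a fibrewise Borel injection $B\hookrightarrow X\times[0,1]$, and composing a Borel injection $[0,1]\hookrightarrow 2^{\N}$ with $g$ gives a fibrewise Borel injection $X\times[0,1]\hookrightarrow B$; the classical orbit-following construction of a bijection out of two injections is a countable union of Borel pieces and preserves the $X$-coordinate, hence produces a Borel parametrization $X\times[0,1]\to B$.

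The main obstacle is not the combinatorics but the requirement that every choice above — the compact $K_x$, its perfect kernel, the basic open sets $U_s$ of the Cantor scheme, and the orbit decomposition in the Cantor--Bernstein step — be realized as a Borel function of the parameter $x$. This forces one to invoke the standard measurable-selection and uniformization theorems (Jankov--von Neumann, Arsenin--Kunugui, Lusin--Novikov) together with the reduction of all choices to a fixed countable basis and, for the inner-regularity step, the parametrized form of approximation by compact sets; organising these uniformizations is precisely the work done in \cite{Ma79}, with the underlying selection results available in \cite{Ke95}.
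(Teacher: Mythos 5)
The paper does not actually prove Theorem~\ref{thm:Borel_param}; it is quoted verbatim from Mauldin \cite{Ma79} and used as a black box, so there is no in-paper proof to compare your sketch against. Judged on its own terms, your cycle $(i)\Rightarrow(iii)\Rightarrow(ii)\Rightarrow(i)$ and the use of a fibrewise Cantor scheme followed by a fibrewise Cantor--Bernstein argument is the right shape, and $(i)\Rightarrow(iii)$ is fine as written.

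There is, however, a genuine gap in your $(iii)\Rightarrow(ii)$: you pass from the compact set $K_x$ to its \emph{perfect kernel} $K_x^{(\infty)}$ and assert that this depends Borel-measurably on $x$. The Cantor--Bendixson derivative $K\mapsto K'$ is Borel on the hyperspace $K(Y)$, but the perfect kernel is obtained only after a transfinite iteration whose length is not bounded across a Borel family, and the map $K\mapsto K^{(\infty)}$ is \emph{not} Borel: if it were, its preimage of $\{\emptyset\}$ would be a Borel description of $\{K\in K(Y): K\ \text{countable}\}$, which is $\Pi^1_1$-complete. So the set $M$ you build need not be Borel. The repair is cheap and in fact cleaner: replace $K_x^{(\infty)}$ by $\supp\left(\mu^x|_{K_x}\right)$. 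Since $\mu^x$ is continuous, this support has no isolated points, hence is perfect; it is a closed subset of the compact $K_x$, hence compact; it lies in $K_x\subset B_x$; and the support map $\prob(Y)\to F(Y)$ is Borel, so $x\mapsto \supp(\mu^x|_{K_x})$ is Borel whenever $x\mapsto K_x$ is. A second, softer concern in the same step is your appeal to ``parametrized inner regularity'' to produce a Borel selection $x\mapsto K_x\in K(Y)$ with $K_x\subset B_x$ and $\mu^x(K_x)\ge 1/2$; the relation $K\subset B_x$ is in general only coanalytic for Borel $B$, so this selection is not a one-line application of a textbook selection theorem and deserves either a precise reference or an argument (e.g.\ reduce to $Y$ compact, approximate $B$ from inside by a Borel set with $\sigma$-compact sections, and then select). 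Your $(ii)\Rightarrow(i)$ is sound in outline: the fibrewise Cantor scheme can be made Borel by minimal-index selection from a fixed countable basis, using that projections of Borel sets with compact sections are Borel, and the fibrewise Cantor--Bernstein bijection built from the two fibre-preserving Borel injections is itself Borel because the chain decomposition is a countable construction.
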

	
	%\begin{lemma}
	%	Let $X$ and $Y$ be Polish spaces, $C \subset X$ compact perfect and $f: X \to Y$ continuous and injective. Then $f(C)$ is compact perfect. In particular, homeomorphic copies of compact perfects sets are compact perfect.
	%\end{lemma}
	%\begin{proof}
	%	This is straight forward.
	%\end{proof}
	
	\begin{remark}
		If $B \subset X \times Y$ is a Borel set with uncountable $x$-sections, all $x$-sections contain a homeomorphic copy of $\{0,1\}^\N$ and therefore a compact perfect set. Loosely speaking, the assertion (ii) in Theorem~\ref{thm:Borel_param} says that these perfect sets can be chosen in a uniform way.
	\end{remark}
	
	Given a kernel $\pi$ from $X$ to $Y=[0,1]$ such that $\pi^x$ is continuous for all $x \in X$, the function $G(x,t):= F_{\pi^x}(t)$ is jointly measurable and $G^x=G(x,\cdot)$ pushes $\pi^x$ to $\lambda$. However, $G^x$ is in general not injective: If $\supp(\pi^x) \subsetneq [0,1]$, the function $F_{\pi^x}$ is constant on non-trivial intervals. The following lemma asserts that the set, where this problem occurs, is Borel in the product space:
	\begin{lemma}\label{lem:set_const_Borel}
		Let $F : X \times [0,1] \to [0,1]$ be a Borel mapping such that for all $x \in [0,1]$ the mapping $t \mapsto F(x,t)$ is monotone. Then the set
		\[
		N:=\{ (x,y) \in X \times [0,1] \:|\: \exists t_1 \neq t_2 : y= F(x,t_1)=F(x,t_2) \}
		\]
		is Borel and $N_x$ is at most countable for all $x \in X$.
	\end{lemma}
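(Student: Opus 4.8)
The plan is to circumvent the existential quantifier over the real parameters $t_1,t_2$ hidden in the definition of $N$: taken naively, $N$ is the projection of a Borel set and hence only \emph{a priori} analytic. Monotonicity of the sections is precisely what lets us replace this real quantifier by a countable (rational) one.

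First I would record the elementary observation that drives the argument. If $t\mapsto F(x,t)$ is monotone and $F(x,t_1)=F(x,t_2)=:y$ for some $t_1<t_2$, then by monotonicity $F(x,t)=y$ for every $t\in[t_1,t_2]$; in particular the non\-degenerate interval $[t_1,t_2]$ contains rationals $q_1<q_2$ with $F(x,q_1)=F(x,q_2)=y$. Conversely, such a pair of rationals witnesses $(x,y)\in N$. Hence
\[
N=\bigcup_{\substack{q_1,q_2\in\Q\cap[0,1]\\ q_1<q_2}}\Big(\{(x,y)\in X\times[0,1]: y=F(x,q_1)\}\cap\{(x,y)\in X\times[0,1]: y=F(x,q_2)\}\Big).
\]
For each fixed $q\in\Q\cap[0,1]$ the map $x\mapsto F(x,q)$ is Borel (it is the composition of the Borel map $x\mapsto(x,q)$ with $F$), so $(x,y)\mapsto y-F(x,q)$ is Borel and its zero set $\{(x,y):y=F(x,q)\}$ is Borel. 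Thus $N$ is a countable union of finite intersections of Borel sets, hence Borel.

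For the section claim, fix $x$ and assume without loss of generality that $F(x,\cdot)$ is non\-decreasing (the non\-increasing case is symmetric). For $y\in N_x$ the level set $L_y:=\{t\in[0,1]:F(x,t)=y\}$ is an interval, again by monotonicity, and it contains at least two points, so it has nonempty interior; moreover $L_y\cap L_{y'}=\emptyset$ for $y\neq y'$. Therefore $\{\operatorname{int} L_y : y\in N_x\}$ is a family of pairwise disjoint nonempty open subsets of $[0,1]$, which must be countable, and so $N_x$ is at most countable.

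The only genuinely delicate step is the first one: realising that the naive description of $N$ as a projection yields merely analyticity, and that monotonicity is exactly the ingredient needed to pass to the countable union displayed above. Once that reduction is in place, both assertions follow by routine descriptive--set--theoretic bookkeeping.
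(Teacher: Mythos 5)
Your proof is correct and uses the same key idea as the paper: monotonicity lets one replace the real existential quantifier over $t_1,t_2$ by a quantifier over rationals, turning $N$ into a countable union of intersections of Borel sets. The only minor deviation is in the countability of $N_x$: the paper reads it off directly from the rational decomposition (each summand contributes a singleton), whereas you argue via pairwise disjoint nonempty open level-intervals; both are fine.
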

	\begin{proof} 
		Since $t \mapsto F(x,t)$ is monotone, we note that
		\[
		(x,y) \in N \iff \exists t_1,t_2 \in \mathbb Q \cap [0,1], t_1 \neq t_2 \text{ s.t.\ } y = F(x,t_1) = F(x,t_2).
		\]
		Hence, we can express $N$ as a countable union of intersections of graphs of Borel functions: 
		\begin{equation} \label{eq:lem.set_const_Borel}
			N = \bigcup_{t_1, t_2 \in \mathbb{Q} \cap [0,1], t_1 \neq t_2} \left\{ (x,F(x,t_1)) : x \in X \right\} \cap \left\{ (x,F(x,t_2)) : x \in X \right\}.
		\end{equation}
		As graphs of Borel functions are Borel measurable (see Appendix~\ref{APrelimSec}), we have shown that $N$ is Borel. 
		Finally, we find by \eqref{eq:lem.set_const_Borel} that $N_x$ is a (at most) countable union of singletons, and hence countable.
	\end{proof}

	\begin{prop}\label{prop:perfect_nullset}
		Let $X$ be a Polish space and $\pi$ a kernel from $X$ to $[0,1]$ s.t.\ $\pi^x$ is continuous for all $x \in X$. Then there is a Borel set $M \subset X \times [0,1]$ such that for all $x \in X$ the set $M_x$ is compact perfect and satisfies $\pi^x(M_x)=0$.
	\end{prop}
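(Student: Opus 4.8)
The plan is to deduce the statement from the parametrization theorem, Theorem~\ref{thm:Borel_param}. It suffices to construct a Borel set $B\subseteq X\times[0,1]$ with the following three properties: \textbf{(a)} $B_x$ is uncountable for every $x\in X$; \textbf{(b)} $\pi^x(B_x)=0$ for every $x\in X$; \textbf{(c)} there is a kernel $\sigma$ from $X$ to $[0,1]$ such that $\sigma^x$ is continuous and $\sigma^x(B_x)=1$ for every $x\in X$. Indeed, granting (a) and (c), the implication (iii)$\Rightarrow$(ii) of Theorem~\ref{thm:Borel_param} (applied to the uncountable standard Borel spaces $X$ and $[0,1]$) yields a Borel set $M\subseteq B$ such that $M_x$ is compact and perfect for all $x$; since $M_x\subseteq B_x$, property (b) gives $\pi^x(M_x)\le\pi^x(B_x)=0$, which is the assertion.

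To construct $B$ and $\sigma$, I use cumulative distribution functions. Put $F(x,t):=\pi^x([0,t])$; this is jointly Borel, and for each $x$ the function $F^x:=F(x,\cdot)$ is nondecreasing and continuous (as $\pi^x$ is a continuous measure), with $F^x(0)=0$ and $F^x(1)=1$. A routine computation using continuity of $F^x$ shows $(F^x)_\ast\pi^x=\lambda$ for every $x$. Let $C\subseteq[0,1]$ denote the usual Cantor set and let $\kappa$ be the Cantor measure on it (the $(1/2,1/2)$-Bernoulli measure), which is a continuous probability measure with $\kappa(C)=1$. Define
\[
B:=\{(x,t)\in X\times[0,1]\:|\:F(x,t)\in C\},
\]
which is Borel since $F$ is Borel and $C$ is closed; moreover $B_x=(F^x)^{-1}(C)$, so $\pi^x(B_x)=(F^x)_\ast\pi^x(C)=\lambda(C)=0$, proving (b).

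For (a) and (c) I use the quantile map $H(x,c):=\inf\{t\in[0,1]\:|\:F(x,t)\ge c\}$, which is jointly Borel (write the infimum over rational $t$). Continuity of $F^x$ forces $F^x(H^x(c))=c$ for every $c\in[0,1]$, so $H^x(C)\subseteq B_x$; moreover $H^x$ is injective, since $H^x$ taking the same value at $c_1<c_2$ would force $F^x$ to jump, contradicting continuity. Hence $B_x\supseteq H^x(C)$ is uncountable, giving (a). Finally set $\sigma^x:=(H^x)_\ast\kappa$; then $x\mapsto\sigma^x$ is a kernel, $\sigma^x$ is continuous (it is the push-forward of the continuous measure $\kappa$ under the injective map $H^x$), and $\sigma^x(B_x)\ge\kappa(C)=1$. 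This proves (c) and completes the argument.

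The verifications left implicit above are routine and all follow from continuity of $F^x$: joint measurability of $F$ and of $H$, the identity $(F^x)_\ast\pi^x=\lambda$, the identity $F^x(H^x(c))=c$, and injectivity of $H^x$. The one point requiring insight is the reduction itself: instead of building $M$ by a Cantor-type scheme performed uniformly in $x$ (which is awkward to make Borel), one only needs a $\pi$-null Borel set $B$ that is ``large'' in the sense of supporting a continuous kernel, after which Theorem~\ref{thm:Borel_param} does the rest.
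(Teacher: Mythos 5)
Your proof is correct and takes a genuinely different route from the paper's. The paper constructs $M$ directly by a case distinction: if $\supp(\pi^x)=[0,1]$ it takes $M_x$ to be the image of the Cantor set under the quantile function of $\pi^x$ (a homeomorphism in this case), and otherwise it takes $M_x$ to be a fixed non-degenerate closed interval inside the first basic open set of $\pi^x$-measure zero; the main work is then to verify that $M$ is Borel and has the required fiberwise properties. Your approach instead reduces the statement to Theorem~\ref{thm:Borel_param}: you build a Borel $\pi$-null set $B$ whose fibers are uncountable and carry a continuous kernel $\sigma$, and the implication (iii)$\Rightarrow$(ii) of that theorem then supplies the compact-perfect-fibered Borel $M\subseteq B$ for free. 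This neatly sidesteps both the case distinction on $\supp(\pi^x)$ and the direct verification that the fiberwise Cantor-set images assemble into a Borel subset of $X\times[0,1]$, at the cost of invoking the parametrization theorem already at this stage rather than only later in the proof of Theorem~\ref{thm:borel_iso_parametr} (there is no circularity, so this is harmless). The concrete constructions --- pulling $C$ back under the CDF $F^x$ to get $B_x$, and pushing the Cantor measure forward under the quantile map $H^x$ to get $\sigma^x$ --- are sound, and the points left implicit (joint Borel-measurability of $H$, the identity $F^x(H^x(c))=c$ for continuous $F^x$, injectivity of $H^x$, continuity of $\sigma^x$, and that $x\mapsto\sigma^x$ is a kernel) all check out.
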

	\begin{proof}
		Let $(U_n)_{n \in \N}$ be a base of the standard topology of $[0,1]$. Each $U_n$ contains a non-empty open interval, hence an non-degenerate closed interval $C_n$. Clearly, $C_n$ is compact perfect. The usual Cantor set $C \subset [0,1]$ is compact perfect as well. Define a set $M \subset X \times [0,1]$ via	
		\begin{align*}
			(x,t) \in M :\iff \begin{cases}
				t \in F_{\pi^x}(C) & \text{if } \supp(\pi^x)=[0,1]\\
				t \in C_n & \text{if } n= \min\{k| \pi^x(U_k)=0\}.
			\end{cases}
		\end{align*}
		The mapping $(x,t) \mapsto F_{\pi^x}(t)$ is measurable in $x$ and continuous in $t$, hence jointly measurable (see e.g.\  \cite[Lemma 4.51]{AlBo07}).  Therefore, $\{(x,t)| t \in F_{\pi^x}(C)\}$ is Borel and hence $M$ is Borel.
		
		If $\supp(\pi^x)=[0,1]$ the function $F_{\pi^x}$ is a homeomorphism, hence $M_x =  F_{\pi^x}(C)$ is compact perfect and $\pi^x(M_x)=\lambda(C)=0$. If $\supp(\pi^x) \neq[0,1]$, there is some (and therefore a minimal) $n$ s.t.\ $\pi^x(U_n)=0$ and we have $M_x=C_n$ and hence $\pi^x(M_x) \le \pi^x(U_n)=0$.
	\end{proof}
	
	\begin{proof}[Proof of Theorem \ref{thm:borel_iso_parametr}]
		By the Borel isomorphism Theorem~(c.f.\ Section~\ref{APrelimSec}) we can assume that $Y=[0,1]$. By \cite[Lemma 4.51]{AlBo07} the mapping
		\[
		F: X \times [0,1] \to X \times [0,1] : (x,t) \mapsto (x,F_{\pi^x}(t))
		\]
		is jointly measurable. By Lemma \ref{lem:set_const_Borel} the set 
		\[
		N:=\{ (x,y) \in X \times [0,1] \:|\: \exists t_1 \neq t_2 : y= F(x,t_1)=F(x,t_2) \}
		\]
		is Borel and $N_x$ is at most countable for all $x \in X$. The set $M:= F^{-1}(N)$ is Borel and satisfies $\pi^x(M_x)=\pi^x([F(x,\cdot)]^{-1}(N_x))= \lambda(N_x)=0$ for all $x \in X$ because $N_x$ is countable. Clearly, $F$ is a bijection between $(X \times [0,1]) \setminus M$ and $(X \times [0,1]) \setminus N$.
		
		By Proposition~\ref{prop:perfect_nullset} there exist Borel sets $A, B \subset X \times [0,1]$ such that for all $x \in X$ the sets $A_x$ and $B_x$ are compact perfect and $\pi^x(A_x)=0$ and $\lambda(B_x)=0$. This implies that $\lambda([F(x,\cdot)](A_x))=0$ and $\pi^x([F(x,\cdot)]^{-1}(B_x))=0$.
		
		Consider the sets $C:= A \cup F^{-1}(B) \cup M$ and $D:= F(A)\cup B \cup N$. Then we have $\pi^x(C_x)\le \pi^x(A_x)+ \pi^x([F(x,\cdot)]^{-1}(B_x))+ \pi^x(M_x)=0$ and $\lambda(D_x) \le \lambda([F(x,\cdot)](A_x)) + \lambda(B_x) + \lambda(N_x)=0$ for all $x \in X$ and $F$ is a bijection between $(X \times [0,1]) \setminus C$ and $(X \times [0,1]) \setminus D$. Moreover, $C$ and $D$ both satisfy the assumptions of Theorem~\ref{thm:Borel_param}, so there exist Borel parametrizations $f : X \times [0,1] \to C$ and $g: X \times [0,1] \to D$. 
		
		Clearly, the mapping 
		\[
		\widetilde G : X \times [0,1] \to  X \times [0,1] : (x,t) \mapsto \begin{cases}
			F(x,t) & t \in [0,1] \setminus C_x,\\
			g(f^{-1}(x,t))  & t \in C_x
		\end{cases}
		\]
		is a Borel parametrization of $X \times [0,1]$. Denote $\pr_2$ the projection on the second component. It is easy to see that ${\pr_2}_\ast\widetilde{G}(x,\cdot)_\ast\pi^x=\lambda$ for all $x \in X$. Hence, the mapping $G := \pr_2 \circ \widetilde{G}$ has the desired properties.
	\end{proof}
	We state two corollaries of Theorem~\ref{thm:borel_iso_parametr} that are useful throughout the paper:
	\begin{cor}\label{cor:borel_iso_parametr2}
		Let $X, Y$ and $Z$ be standard Borel spaces, $\mu$ a kernel from $Z$ to $X$ and $\nu$ a kernel from $Z$ to $Y$ s.t.\ $\mu^z$ and $\nu^z$ are continuous probability measures for all $z \in Z$. Then there exists a Borel function 
		\[
		G : Z \times X \to Y
		\] 
		such that for all $z \in Z$ the mapping $G^z = G(z, \cdot ) : X \to Y$ is a Borel ismorphism satisfying $G^z_\ast\mu^z = \nu^z$.
	\end{cor}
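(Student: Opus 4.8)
The plan is to derive this corollary directly from Theorem~\ref{thm:borel_iso_parametr} by routing both marginals through the unit interval. First I would apply Theorem~\ref{thm:borel_iso_parametr} to the kernel $\mu$ from $Z$ to $X$ (each $\mu^z$ is continuous, so the hypotheses are met) to obtain a Borel map $G_1 : Z \times X \to [0,1]$ such that $G_1^z := G_1(z,\cdot)$ is a Borel isomorphism with ${G_1^z}_\ast \mu^z = \lambda$ for every $z \in Z$. Applying the same theorem to the kernel $\nu$ from $Z$ to $Y$ yields a Borel map $G_2 : Z \times Y \to [0,1]$ with $G_2^z$ a Borel isomorphism and ${G_2^z}_\ast \nu^z = \lambda$ for every $z$. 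Intuitively, $G^z := (G_2^z)^{-1} \circ G_1^z$ should then do the job, as it transports $\mu^z$ to $\lambda$ and $\lambda$ to $\nu^z$.

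The one point that needs care is the joint measurability of the fiberwise inverse $(z,t) \mapsto (G_2^z)^{-1}(t)$. To handle it I would consider the map $\widetilde{G_2} : Z \times Y \to Z \times [0,1]$, $(z,y) \mapsto (z, G_2(z,y))$. It is Borel, and since every $G_2^z$ is a bijection it is itself a bijection between the standard Borel spaces $Z \times Y$ and $Z \times [0,1]$; hence it is a Borel isomorphism (Section~\ref{APrelimSec}), and its inverse $(z,t) \mapsto (z, (G_2^z)^{-1}(t))$ is Borel. In particular $(z,t) \mapsto (G_2^z)^{-1}(t)$ is Borel.

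Finally I would set $G : Z \times X \to Y$, $G(z,x) := (G_2^z)^{-1}(G_1(z,x))$. By the previous step $G$ is Borel as a composition of Borel maps, and for each $z \in Z$ the map $G^z = (G_2^z)^{-1} \circ G_1^z$ is a Borel isomorphism from $X$ to $Y$, being a composition of Borel isomorphisms. Moreover
\[
{G^z}_\ast \mu^z = (G_2^z)^{-1}_\ast \big( {G_1^z}_\ast \mu^z \big) = (G_2^z)^{-1}_\ast \lambda = \nu^z,
\]
which is exactly the claim. The main (indeed only) obstacle is the measurability of the fiberwise inverse above; everything else is a formal composition argument, so the Borel isomorphism theorem carries all the weight.
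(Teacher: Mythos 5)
Your proof is correct and follows essentially the same route as the paper: both factor through $[0,1]$ via Theorem~\ref{thm:borel_iso_parametr} applied to $\mu$ and $\nu$ separately, and then compose $G^z = (G_2^z)^{-1}\circ G_1^z$, with joint measurability of the fiberwise inverse secured by viewing $(z,y)\mapsto(z,G_2(z,y))$ as a Borel bijection (hence Borel isomorphism) between standard Borel spaces. The only cosmetic difference is that the paper refers back to the Borel parametrizations constructed inside the proof of Theorem~\ref{thm:borel_iso_parametr}, whereas you rebuild them from the theorem's statement; the content is the same.
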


	\begin{proof}
		As we see at the end of the proof of Theorem~\ref{thm:borel_iso_parametr}, there are Borel parametrizations  $\widetilde F : Z \times X \to Z \times [0,1]$ and $\widetilde H: Z \times Y \to Z \times [0,1]$ such that ${\pr_2}_\ast\widetilde{F}(x,\cdot)_\ast\mu^x=\lambda$ and ${\pr_2}_\ast\widetilde{H}(x,\cdot)_\ast\nu^x=\lambda$ 
		for all $z \in Z$. It is easy to see that $G:= \pr_2 \circ \widetilde H^{-1} \circ \widetilde F$ has the desired properties.
	\end{proof}

	\begin{cor}\label{cor:borel_iso_parametr}
		Let $X$ and $Y$ be standard Borel spaces and $\pi$ a kernel from $X$ to $Y$. Then there exists a Borel function
		\[
		G : X \times Y \times [0,1]  \to [0,1]^2
		\] 
		such that for all $x \in X$ the mapping $G^x= G(x, \cdot ) : Y \times [0,1] \to [0,1]^2$ is a Borel isomorphism satisfying $G^x_\ast(\pi^x \otimes \lambda ) = \lambda^2$, where $\lambda^2$ denotes the Lebesgue measure on $[0,1]^2$.
	\end{cor}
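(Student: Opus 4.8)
The plan is to reduce the statement to the already-established parametrized isomorphism theorem, Theorem~\ref{thm:borel_iso_parametr}, by exploiting the fact that tensoring any probability measure with $\lambda$ produces a non-atomic measure. Concretely, I would pass from the kernel $\pi$ (whose fibres $\pi^x$ may have atoms) to the kernel whose fibres are $\pi^x \otimes \lambda$ on the standard Borel space $Y \times [0,1]$, and then transport back.

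First I would consider the kernel $\widetilde\pi$ from $X$ to $Y \times [0,1]$ given by $\widetilde\pi^x := \pi^x \otimes \lambda$. This is a genuine Borel kernel: the assignment $\rho \mapsto \rho \otimes \lambda$ is continuous from $\prob(Y)$ to $\prob(Y \times [0,1])$ with respect to weak convergence, hence Borel, and $x \mapsto \pi^x$ is Borel by hypothesis, so $x \mapsto \pi^x \otimes \lambda$ is Borel as a composition. Moreover every fibre $\widetilde\pi^x$ is a \emph{continuous} probability measure, since for any point $(y,u) \in Y \times [0,1]$ one has $(\pi^x \otimes \lambda)(\{(y,u)\}) = \pi^x(\{y\})\,\lambda(\{u\}) = 0$.

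Next I would apply Theorem~\ref{thm:borel_iso_parametr} to the kernel $\widetilde\pi$, taking $X$ as the parameter space and $Y \times [0,1]$ (which is standard Borel) as the target. This produces a Borel function $\widetilde G : X \times (Y \times [0,1]) \to [0,1]$ such that for every $x \in X$ the map $\widetilde G^x := \widetilde G(x,\cdot) : Y \times [0,1] \to [0,1]$ is a Borel isomorphism satisfying $\widetilde G^x_\ast(\pi^x \otimes \lambda) = \lambda$. Finally, by the isomorphism theorem for measures applied to the continuous measure $\lambda^2$ on $[0,1]^2$, together with the Borel isomorphism theorem, there is a Borel isomorphism $h : [0,1] \to [0,1]^2$ with $h_\ast\lambda = \lambda^2$. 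Setting $G(x,y,u) := h\bigl(\widetilde G(x,y,u)\bigr)$ yields a Borel function $G : X \times Y \times [0,1] \to [0,1]^2$ for which $G^x = h \circ \widetilde G^x$ is a Borel isomorphism $Y \times [0,1] \to [0,1]^2$ and $G^x_\ast(\pi^x \otimes \lambda) = h_\ast \widetilde G^x_\ast(\pi^x \otimes \lambda) = h_\ast\lambda = \lambda^2$.

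The only mildly technical point I anticipate is justifying the Borel measurability of $x \mapsto \pi^x \otimes \lambda$ (either via weak continuity of $\rho \mapsto \rho \otimes \lambda$, or directly via a monotone-class argument showing $x \mapsto (\pi^x \otimes \lambda)(B)$ is Borel for all Borel $B \subseteq Y \times [0,1]$, starting from measurable rectangles); everything else is straightforward bookkeeping on top of Theorem~\ref{thm:borel_iso_parametr} and the classical isomorphism theorems, so I do not expect a serious obstacle.
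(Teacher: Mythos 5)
Your proposal is correct and follows essentially the same route as the paper: observe that $\pi^x \otimes \lambda$ is a continuous probability measure on the standard Borel space $Y \times [0,1]$, view $x \mapsto \pi^x \otimes \lambda$ as a Borel kernel, and invoke Theorem~\ref{thm:borel_iso_parametr}. The paper compresses this into two sentences and leaves the post-composition with a Borel isomorphism $h:[0,1]\to[0,1]^2$ (pushing $\lambda$ to $\lambda^2$) and the Borel measurability of $x \mapsto \pi^x \otimes \lambda$ implicit; your write-up just fills in these routine steps explicitly.
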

	\begin{proof}
		Note that $\pi^x \otimes \lambda$ is a continuous probability measure on $Y \times [0,1]$ for any  $\pi^x \in \prob(Y)$. Hence we can apply Theorem~\ref{thm:borel_iso_parametr} to standard Borel spaces $X$ and $Y \times [0,1]$ and the kernel $(\pi^x\otimes \lambda)_{x \in X}$.
	\end{proof}
	A further useful application of the isomorphism theorem for kernels is the following proposition:
	\begin{prop}\label{prop:part_pushfwd2}
		Let $X$ and $Z$ be Polish spaces and for every $n \in \N$, let $M^n \subset Z \times X$ be Borel such that for every $z \in Z$ the collection $\M_z:= \{ M^n_z : n \in \N \}$ is a partition of $X$. Further let $\mu$ be a kernel from $Z$ to $X$ s.t.\ $\mu^z$ is continuous for all $z \in Z$. 	
	Then there exists a Borel mapping $\Phi : Z \times X \to X \times [0,1]$ s.t.\ for all $z \in Z$ the mapping $\Phi^z =\Phi(z,\cdot) : X \to X \times [0,1]$ is a Borel isomorphism satisfying $\Phi^z _\ast(\mu^z|_{M^n_z})=(\mu^z|_{M^n_z}) \otimes \lambda$ for all $n \in \N$. 
	\end{prop}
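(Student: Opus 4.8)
The plan is to reduce the statement, cell by cell of the partition $\M$, to the parameterized isomorphism theorem for kernels, Corollary~\ref{cor:borel_iso_parametr2}, and then to glue the resulting families of Borel isomorphisms.

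First I would arrange that every member of $\M$ is uncountable. Since $X$ is uncountable it cannot be covered by countably many countable sets, so $\M$ contains an uncountable cell $M_\star$; let $C$ be the union of the (at most countably many) countable cells of $\M$, so $C$ is countable and hence $\mu^z(C)=0$ for every $z$ by continuity of $\mu^z$. Passing from $\M$ to the coarser partition obtained by discarding the countable cells and replacing $M_\star$ by $M_\star\cup C$, one gets an at most countable partition of $X$ into uncountable Borel sets; since $\mu^z|_M=0$ for countable $M$ and $\mu^z|_{M_\star\cup C}=\mu^z|_{M_\star}$, any $\Phi$ that works for the coarser partition also works for $\M$. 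So I may assume every $M\in\M$ is uncountable, hence Borel isomorphic to $[0,1]$, and $M\times[0,1]$ is standard Borel as well.

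Next, fix $M\in\M$ and normalize. The maps $z\mapsto\mu^z|_M$ and $z\mapsto\mu^z(M)$ are Borel, and after fixing an auxiliary continuous probability measure $\rho_M$ on $M$ (e.g.\ the pullback of $\lambda$ under a Borel isomorphism $M\cong[0,1]$) I set
\[
\mu_M^z := \begin{cases} \mu^z(M)^{-1}\,\mu^z|_M & \text{if } \mu^z(M)>0,\\ \rho_M & \text{if } \mu^z(M)=0.\end{cases}
\]
Then $z\mapsto\mu_M^z$ is a Borel kernel from $Z$ to $M$ with continuous probability values, and so is $z\mapsto\mu_M^z\otimes\lambda$ as a kernel from $Z$ to $M\times[0,1]$. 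Applying Corollary~\ref{cor:borel_iso_parametr2} to this pair of kernels yields a Borel map $G_M:Z\times M\to M\times[0,1]$ whose sections $G_M^z$ are Borel isomorphisms with $(G_M^z)_\ast\mu_M^z=\mu_M^z\otimes\lambda$; multiplying by $\mu^z(M)$ (and using $0=0$ on the exceptional Borel set $\{\mu^z(M)=0\}$) gives $(G_M^z)_\ast(\mu^z|_M)=(\mu^z|_M)\otimes\lambda$.

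Finally I would glue: define $\Phi(z,x):=G_M(z,x)$ for the unique $M\in\M$ with $x\in M$. This is Borel because $\M$ is at most countable and each $Z\times M$ is Borel. For fixed $z$, $\Phi^z$ is Borel (being Borel on each cell $M$) and maps each $M$ bijectively onto $M\times[0,1]$; since $\{M\times[0,1]:M\in\M\}$ partitions $X\times[0,1]$, $\Phi^z$ is a Borel bijection between the standard Borel spaces $X$ and $X\times[0,1]$, hence a Borel isomorphism (Section~\ref{APrelimSec}), and the required pushforward identity holds on each cell by the previous step. The only real points of care—neither deep, since the measurable selection of isomorphisms is already packaged in Corollary~\ref{cor:borel_iso_parametr2}—are keeping joint measurability through the normalization (handled by patching with $\rho_M$ on $\{\mu^z(M)=0\}$) and accommodating countable partition cells, on which $M$ and $M\times[0,1]$ are not Borel isomorphic; the reduction in the second paragraph disposes of the latter precisely because $\mu^z$ is continuous.
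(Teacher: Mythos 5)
Your proof is correct, and it follows the paper's overall template (normalize per cell, apply the parameterized isomorphism result, glue) while making one genuine simplification in how the countable cells of $\M$ are disposed of. The paper restricts the per-cell construction to the union $X'$ of uncountable cells, obtaining $\widetilde\Phi\colon Z\times X'\to X'\times[0,1]$, and then has to patch $\widetilde\Phi$ into a bijection of all of $Z\times X$ onto $Z\times X\times[0,1]$: it invokes Proposition~\ref{prop:perfect_nullset} to produce compact perfect null sets, bundles them with $Z\times(X\setminus X')$ on the domain side and $Z\times(X\setminus X')\times[0,1]$ on the range side, and applies Theorem~\ref{thm:Borel_param} to get a Borel parametrization that reroutes the countably many leftover points into the continuum-sized slice of the target — a delicate step forced by the cardinality mismatch between a countable cell $M$ and $M\times[0,1]$. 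You sidestep this entirely by observing, up front, that continuity of $\mu^z$ makes every countable cell a null set, so the countable cells can simply be absorbed into any fixed uncountable cell $M_\star$ before anything is built; the proposition's pushforward condition for the discarded cells then holds vacuously, and for $M_\star$ it follows because $\mu^z|_{M_\star}=\mu^z|_{M_\star\cup C}$. This trades the perfect-null-set and Borel-parametrization machinery for an elementary reduction, and your direct gluing works because after the reduction every $M$ and $M\times[0,1]$ are both standard Borel of continuum cardinality. One small bookkeeping note: the paper's citation of Theorem~\ref{thm:borel_iso_parametr} at this point is really an invocation of Corollary~\ref{cor:borel_iso_parametr2}, which is the result you correctly use.
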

	
	\begin{proof}
        For $n \in \N$, write $Z^n := \{z \in Z : \mu^z(M^n_z) >0 \}$ and $\tilde M^n := M^n \cap (Z^n \times X)$. Note that $Z^n$ is Borel by \cite[Theorem~17.25]{Ke95}. We first show that there is a Borel map $\Phi_n : \tilde M^n \to X \times [0,1]$ such that for every $z \in Z^n$, the map $\Phi_n(z,\cdot) :  M^n_z \to M^n_z \times [0,1]$ is bijective and satisfies ${\Phi_n^z}_\ast (\mu^z|_{M^n_z}) = \mu^z|_{M^n_z} \otimes \lambda$. 

        Indeed, by Theorem~\ref{thm:Borel_param}(iii) there exists a Borel isomorphism $f_n : {\tilde M^n} \to Z_n \times [0,1]$ such that $f_n( \{z\} \times M^n_z ) = \{z\} \times [0,1]$ for all $z \in Z^n$. Moreover, by Corollary~\ref{cor:borel_iso_parametr2} there is a map $G_n : Z^n \times [0,1] \to [0,1]^2$ such that $G^z$ is a Borel isomorphism satisfying ${G_n^z}_\ast {( \pr_{[0,1]} \circ f^z_n)}_\ast (\mu^z|_{M^n_z}) =  {( \pr_{[0,1]} \circ f^z_n)}_\ast (\mu^z|_{M^n_z}) \otimes \lambda$. We set $\Phi^z_n := (( \pr_{[0,1]} \circ f^z_n)^{-1},\id) \circ G^z_n \circ ( \pr_{[0,1]} \circ f^z_n)$.
    
        We write $E:= \bigcup_{n\in \N}  { \tilde M^n} $ and $\tilde \Phi := \bigcup_{n \in \N} \Phi_n$. Note that $\tilde \Phi : E \to X \times [0,1]$ and for every $z \in Z$, $\tilde\Phi^z : E_z \to E_z \times [0,1]$ is  a Borel isomorphism satisfying $\tilde \Phi^z _\ast(\mu^z|_{M^n_z})=(\mu^z|_{M^n_z}) \otimes \lambda$ for all $n \in \N$.
        
        Now, it remains to modify $\tilde \Phi$ on null sets to define a map $\Phi$ that is defined on the entire space $Z \times X$ and such that $\Phi^z : X \to X \times [0,1]$ is a Borel isomorphism with the desired properties. By Proposition~\ref{prop:perfect_nullset} there are Borel sets $A \subset Z \times X$ and $B \subset Z \times X \times [0,1]$ such that for all $z \in Z$ the sets $A_z$ and $B_z$ are compact perfect and satisfy $\mu^z(A_z)=0$ and $\mu^z\otimes \lambda(B_z)=0$. 

        We write $\bar{\Phi} := (\pr_Z,\tilde\Phi) : E \to Z \times X \times [0,1]$. We consider the sets
        \[
        C:= A \cup \bar{\Phi}^{-1}(B) \cup (Z \times X\setminus E), \qquad D:= \bar{\Phi}(A) \cup B \cup (Z \times X \times [0,1] \setminus \bar{\Phi}(E)   ) .
        \]
		Clearly, $C$ and $D$ both satisfy the assumptions from Theorem~\ref{thm:Borel_param}, so there exists a Borel isomorphism $\Psi: C \to D$ such that $\Psi(\{z\} \times C_z ) = \{z\} \times D_z$  for all $z \in Z$. Denote $\pr : Z \times X \times [0,1] \to X \times [0,1]$ the projection. 
		It is easy to check that the mapping
		\[
		\Phi: Z \times X \to X \times [0,1] : (z,x) \mapsto \begin{cases}
			\pr(\bar\Phi(z,x)) & (z,x) \notin C, \\
			\pr(\Psi(z,x)) & (z,x) \in C,
		\end{cases} 
		\] 
		has the desired properties.
	\end{proof}

\bibliographystyle{abbrv} 
%\bibliography{bib.bib}
\bibliography{joint_biblio}
\end{document}